\numberwithin{equation}{section}
\theoremstyle{plain}
\newtheorem{theorem}{Theorem}[section]
\newtheorem{proposition}[theorem]{Proposition}
\newtheorem{lemma}[theorem]{Lemma}
\newtheorem{corollary}[theorem]{Corollary}
\newtheorem*{Lindtheorem}{Lindenstrauss' pointwise ergodic theorem}
\theoremstyle{remark}
\theoremstyle{definition}
\newcommand{\reft}[1]{Theorem \ref{#1}}
\newcommand{\refp}[1]{Proposition \ref{#1}}
\newcommand{\refl}[1]{Lemma \ref{#1}}
\newcommand{\refs}[1]{Section \ref{#1}}
\newcommand{\refeq}[1]{Eq. \eqref{#1}}
\def\bet{\beta}
\def\del{\delta}
\def\lam{\lambda}
\def\tet{\vartheta}
\def\vphi{\varphi}
\def\Ome{\Omega}
\def\CA{\mathcal{A}}
\def\CB{\mathcal{B}}
\def\CC{\mathcal{C}}
\def\CD{\mathcal{D}}
\def\CE{\mathcal{E}}
\def\CF{\mathcal{F}}
\def\CG{\mathcal{G}}
\def\CH{\mathcal{H}}
\def\CK{\mathcal{K}}
\def\CL{\mathcal{L}}
\def\CM{\mathcal{M}}
\def\CO{\mathcal{O}}
\def\CR{\mathcal{R}}
\def\CS{\mathcal{S}}
\def\CT{\mathcal{T}}
\def\AC{\CA^{\circ}}
\def\GC{\CG^{\circ}}
\def\k{\Bbbk}
\def\kc{\k^{\circ}}
\def\U{\mathbf{U}}
\def\C{\mathbb{C}}
\def\F{\mathbb{F}}
\def\I{\mathbb{I}}
\def\N{\mathbb{N}}
\def\G{\Gamma}
\def\R{\mathbb{R}}
\def\fq{\mathbb{F}_{q}}
\def\fru{\mathfrak{u}}
\def\g{\gamma}
\def\1{\mathbf{1}}
\def\0{\mathbf{0}}
\def\m{\mu}
\newcommand{\map}[3]{#1 \colon #2 \to #3}
\newcommand{\seq}[2]{#1_{1}, \ldots, #1_{#2}}
\newcommand{\set}[2]{\{ #1 \colon #2 \}}
\def\x{\times}
\def\inv{^{-1}}
\def\trp{^{\mathtt{T}}}
\def\sset{\subseteq}
\def\ovl{\overline}
\def\Ch{\operatorname{Ch}}
\def\ICh{\Ch^{+}}
\def\Irr{\operatorname{Irr}}
\def\Cl{\operatorname{Cl}}
\def\SCl{\operatorname{SCl}}
\def\SCh{\operatorname{SCh}}
\def\ISCh{\SCh^{+}}
\def\Ind{\operatorname{Ind}}
\def\ev{\boldsymbol{e}}
\def\fc{\mathrm{fc}}
\def\fsc{\mathrm{fsc}}
\def\id{\operatorname{id}}
\def\SP{\operatorname{\mathbf{Sp}}}
\def\Col{\operatorname{Col}}
\def\tr{\operatorname{Tr}}
\def\Hom{\operatorname{Hom}}
\def\supp{\operatorname{supp}}
\def\nest{\operatorname{nest}}
\begin{document}


\title[]{Supercharacters of discrete algebra groups}

\author[]{Carlos A. M. Andr\'e}

\author[]{Jocelyn Lochon}

\thanks{This research was made within the activities of the Group for Linear, Algebraic and Combinatorial Structures of the Center for Functional Analysis, Linear Structures and Applications (University of Lisbon, Portugal), and was partially supported by the Portuguese Science Foundation (FCT) through the Strategic Projects UID/MAT/04721/2013 and UIDB/04721/2020. The second author was partially supported by the Lisbon Mathematics PhD program (funded by the Portuguese Science Foundation). The major part of this work is included in the second author Ph.D. thesis.}

\address[C. A. M. Andr\'e]{Centro de An\'alise Funcional, Estruturas Lineares e Aplica\c c\~oes (Grupo de Estruturas Lineares e Combinat\'orias) \\ Departamento de Matem\'atica \\ Faculdade de Ci\^encias da Universidade de Lisboa \\ Campo Grande, Edif\'\i cio C6, Piso 2 \\ 1749-016 Lisboa \\ Portugal}
\email{caandre@ciencias.ulisboa.pt}

\address[J. Lochon]{Centro de An\'alise Funcional, Estruturas Lineares e Aplica\c c\~oes (Grupo de Estruturas Lineares e Combinat\'orias) \\ Departamento de Matem\'atica \\ Faculdade de Ci\^encias da Universidade de Lisboa \\ Campo Grande, Edif\'\i cio C6, Piso 2 \\ 1749-016 Lisboa \\ Portugal}
\email{jocelyn.lochon@gmail.com}

\subjclass[2010]{22D10; 22D40}

\date{\today}

\keywords{Amenable countable discrete algebra group; supercharacter; ergodic measure}

\begin{abstract}
The concept of a supercharacter theory of a finite group was introduced by Diaconis and Isaacs in \cite{Diaconis2008a} as an alternative to the usual irreducible character theory, and exemplified with a particular construction in the case of finite algebra groups. We extend this construction to arbitrary countable discrete algebra groups, where superclasses and indecomposable supercharacters play the role of conjugacy classes and indecomposable characters, respectively. Our construction can be understood as a cruder version of Kirillov's orbit method and a generalisation of Diaconis and Isaacs construction for finite algebra groups. However, we adopt an ergodic theoretical point of view. The theory is then illustrated with the characterisation of the standard supercharacters of the group of upper unitriangular matrices over an algebraic closed field of prime characteristic.
\end{abstract}

\maketitle

\section{Introduction}

This paper mainly deals with the unitary representation theory of countable discrete algebra groups. Let $\k$ be an arbitrary field, and let $\CA$ be an associative nil $\k$-algebra; we recall that an associative $\k$-algebra $\CA$ is said to be \textit{nil} if every element of $\CA$ is nilpotent (in particular, $\CA$ does not have an identity). Let $G = 1+\CA$ be the set of formal objects of the form $1+a$ where $a \in \CA$; then, $G$ is easily seen to be a group with respect to the multiplication defined by $(1+a)(1+b) = 1+a+b+ab$ for all $a,b \in \CA$. (In fact, $G$ is a subgroup of the group of units of the $\k$-algebra $\CA_{1} = \k\cdot 1 + \CA$.) Following \cite{Isaacs1995a}, a group $G$ constructed in this way will be referred to as an \textit{algebra group over $\k$}; by the way of example, if $\CA = \fru_{n}(\k)$ is the $\k$-algebra consisting of all strictly uppertriangular $n \x n$ matrices over $\k$, then the corresponding algebra group $G = 1+\CA$ is (isomorphic to) the upper unitriangular group $U_{n}(\k)$. Henceforth, we view $G$ as a subgroup of the group of units of the $\k$-algebra $\k\cdot 1 + \CA$; however, we observe that $\CA$ is not assumed to have finite dimension over $\k$.

In general, countable discrete algebra groups are not \textit{tame} (or of type I), meaning that the decomposition of an arbitrary unitary representation (on a separable Hilbert space) as a direct integral of irreducible representations might not be unique; indeed, this is not the case if and only if the group in question has an abelian subgroup of finite index, as shown by E. Thoma in \cite[Satz 6]{Thoma1964a} (see also \cite{Tonti2019a}). For this reason, we will approach the representation theory of a discrete algebra group through its character theory. 

More generally, let $G$ be an arbitrary topological group. A complex-valued continuous function $\map{\vphi}{G}{\C}$ is said to be \textit{positive definite} if the following two conditions are satisfied:
\begin{enumerate}
\item $\vphi(g^{-1}) = \overline{\vphi(g)}$ for all $g \in G$.
\item For every $\seq{g}{m} \in G$, the Hermitian matrix $\big[\vphi(g_{i}g_{j}^{-1})\big]_{1 \leq i,j \leq m}$ is nonnegative, that is, $\sum_{1 \leq i,j \leq n} \vphi(g_ig_j^{-1})z_i\overline{z_j} \geq 0$ for all $\seq{z}{n} \in \C.$
\end{enumerate}
A continuous function $\map{\vphi}{G}{\C}$ is said to be \textit{central} (or a \textit{class function}) if it is constant on the conjugacy classes of $G$, that is, if $\vphi(ghg^{-1}) = \vphi(h)$ for all $g,h \in G$, and it is said to be \textit{normalised} if $\vphi(1) = 1$. We denote by $\Ch(G)$ the set consisting of all normalised, central, positive definite continuous functions $\map{\vphi}{G}{\C}$.  If $\vphi,\psi \in \Ch(G)$, then for every real number $0 \leq t \leq 1$ the function $(1-t)\vphi+t\psi$ is also an element of $\Ch(G)$, which means that $\Ch(G)$ is a convex set. An element of a convex set is said to be \textit{extreme} if it is not contained in the interior of any interval which is entirely contained in the set; we denote by $\ICh(G)$ the subset of $\Ch(G)$ consisting of extreme elements of $\Ch(G)$. We refer to an element of $\Ch(G)$ as a \textit{character} of $G$ and to an element of $\ICh(G)$ as an \textit{indecomposable character} of $G$. If $G$ is a compact group (in particular, a finite group), then $\ICh(G)$ consists of all \textit{normalised irreducible characters} of $G$; the character of an irreducible (complex) representation $\pi$ of the group $G$ is the function $\chi \colon G \to \mathbb{C}$ given by the trace $\chi(g) = \mathrm{tr}(\pi(g))$ for all $g \in G$, while the normalised irreducible character is given by $\widehat{\chi} = \chi(1)\inv \chi$. This definition makes sense because every irreducible representation of a compact group is finite-dimensional, but it does not make sense in general for non-compact groups. 

In the case where $G$ is a discrete countable group, then $\Ch(G)$ is a Choquet simplex, with respect to the topology of pointwise convergence, such that $\ICh(G)$ is a $G_{\delta}$-set (as proved by Thoma in \cite{Thoma1964a,Thoma1968a}). Consequently, every character of $G$ is uniquely representable as a convex mixture of indecomposable characters (see, for example, \cite{Goodearl1986a} or \cite{Phelps2001a}): for every $\vphi \in \Ch(G)$, there is a unique probability measure $\mu^{\vphi}$ on $\ICh(G)$ such that $$\vphi(g)=\int_{\ICh(G)}\chi(g)\; d\mu^{\vphi}, \qquad g \in G;$$ we refer to $\mu^{\vphi}$ as the \textit{Choquet (or spectral) measure associated with} $\vphi$. (Notice that the uniqueness of this representation of a character with respect to a Choquet measure mirrors the classical decomposition of characters for finite groups as a sum of irreducible characters; moreover, it allows to view $\ICh(G)$ as a \textit{dual space} for $G$, in contrast to the usual dual space consisting of all classes of irreducible representations which for non-tame groups, not only does not provide unique decomposition, but also lacks good topological features; we refer to \cite{Dixmier1977a} fore more details.)

The main tool to study characters of discrete groups is the Gelfand-Naimark-Segal construction which associates a unitary cyclic representation with every character in such a way that indecomposable characters are paired with factor representations of finite von Neumann type. Let $G$ be an arbitrary discrete group, and let $\map{\vphi}{G}{\C}$ be a character of $G$. Let $\C[G]$ denote the complex group algebra of $G$ which is naturally equipped with the involution $\map{\ast}{\C[G]}{\C[G]}$ defined by the rule $\big(\sum_{g \in G} a_{g}g\big)^{\ast} = \sum_{g \in G} \ovl{a}_{g}g\inv$ where $a_{g} \in \C$ for $g \in G;$ in particular, $\C[G]$ ia a $\ast$-algebra. By abuse of notation, we denote the linear extension of $\vphi$ to $\C[G]$ also by $\vphi$; since $\vphi$ is positive-definite, the formula $\langle a, b \rangle = \vphi(b^{\ast}a)$, for $a,b \in \C[G]$. defines a positive semi-definite sesquilinear form on $\C[G]$. Thus, via separation and completion, we obtain a Hilbert space $\CH_{\vphi}$ equipped with a natural map $G \to \CH_{\vphi}$ which sends an element $g \in G$ to the characteristic function $\del_{g}$ of $\{g\}$. It is a standard fact that the left action of $G$ on $\C[G]$ induces a unitary representation $\map{\pi_{\vphi}}{G}{\U(\CH_{\vphi})}$ such that $\vphi(g) = \langle \pi(g)\del_{1}, \del_{1} \rangle$ for all $g \in G$; following the terminology of \cite{Bekka2020c} , we will refer to $(\pi_{\vphi},\CH_{\vphi})$ as the \textit{GNS representation of $G$ associated with $\vphi$}. Let $\CL(G,\vphi)$ denote the von Neumann algebra generated by $\pi_{\vphi}(G)$; hence, $\CL(G,\vphi) = \pi_{\vphi}(G)''$ is the bicommutant of $\pi_{\vphi}(G)$. Since $\vphi$ is a character, the mapping $x \mapsto \vphi(x) = \langle x\del_{1}, \del_{1} \rangle$ defines a faithful normal unital trace on $\CL(G,\vphi)$, and thus $\CL(G,\vphi)$ is a finite von Neumann algebra. Finally, we mention that the central decomposition of $\CL(G,\vphi)$ as a direct integral of factor representations corresponds uniquely to the Choquet decomposition of $\vphi \in \Ch(G)$ as an integral of indecomposable characters; in particular, $\vphi \in \ICh(G)$ if and only if $\CL(G,\vphi)$ is a factor. For more details on von Newmann algebras, we refer to Dixmier's book \cite{Dixmier1981a}; see also \cite{Dixmier1977a}.

Indecomposable characters have been explicitly classified for concrete groups (see for example \cite{Dudko2011a,Hirai2005b,Corwin1994a,Vershik1981a,Vershik1982a}). However, the set $\ICh(G)$ may be too large or too complicated to describe (even in the case of a finite group), and thus it might be of interest to consider smaller and more manageable families of characters which are still rich enough to provide some relevant information about their representations. This question was first addressed in the context of finite groups by P. Diaconis and I.M. Isaacs in the foundational paper \cite{Diaconis2008a} where the notion of \textit{supercharacter theory} was formalised (motivated by previous work of C. Andr\'e \cite{Andre1995a,Andre1995b,Andre1998a,Andre2001a,Andre2002a} and of N. Yan \cite{Yan2001a} on the character theory of the unitriangular groups over finite fields). By a \textit{supercharacter theory} of a finite group $G$ we mean a pair $(\CK,\CE)$ where $\CK$ is a partition of $G$, and $\CE$ is an orthogonal set of characters of $G$ satisfying:
\begin{enumerate}
\item $|\CK| = |\CE|$,
\item every character $\xi \in \CE$ takes a constant value on each member $K \in \CK$, and
\item each irreducible character of $G$ is a constituent of one of the characters $\xi \in \CE$.
\end{enumerate}
We refer to the members $K \in \CK$ as \textit{superclasses} and to the characters $\xi \in \CE$ as \textit{indecomposable supercharacters}\footnote{It is worth to mention that, here and throughout the paper, we have chosen to use a terminology which differs from the most common one; indeed, in \cite{Diaconis2008a} the term ``supercharacter'' is reserved to the elements of $\CE$ (that is, to the indecomposable supercharacters). We prefer to use the term ``supercharacter'' with the meaning of a ``character which is constant on superclasses''; this is in fact consistent with the fact that an arbitrary character is not necessarily an irreducible (or, indecomposable) character.} of $G$. Notice that superclasses of $G$ are always unions of conjugacy classes; moreover, $\{1\}$ is a superclass and the trivial character $1_{G}$ is always a supercharacter of $G$. The superclasses and the indecomposable supercharacters of a particular supercharacter theory exhibit much of the same duality as the conjugacy classes and the irreducible characters of the group do (and thus a supercharacter theory can be interpreted as an \textit{approximation} of the classical character theory); indeed, the usual character theory of $G$ is a trivial example of a supercharacter theory where $\CK$ is the set $\Cl(G)$ consisting of all conjugacy classes of $G$ and $\CE$ is the set $\Irr(G)$ consisting of all irreducible characters of $G$.

The \textit{standard supercharacter theory} of an arbitrary finite algebra group is described in \cite{Diaconis2008a}, and the main goal of this paper is to extend this construction to the case of an arbitrary countable discrete algebra group. Prototype examples of countable discrete algebra groups are the unitriangular groups defined over an arbitrary countable discrete field $\k$. On the one hand, for every $n \in \N$, let $U_{n}(\k)$ denote the \textit{unitriangular group} over $\k$ consisting of all $n \times n$ upper-triangular matrices over $\k$ with all entries in the main diagonal equal to $1$; notice that $U_{n}(\k) = 1+\fru_{n}(\k)$ where $\fru_{n}(\k)$ is the nil $\k$-algebra consisting of all $n\x n$ upper-triangular matrices with zeroes on the main diagonal. On the other hand, let $U_{\infty}(\k)$ denote the \textit{locally finite dimensional unitriangular group} over $\k$ consisting of all infinite upper-triangular square matrices over $\k$ with all diagonal entries equal to $1$ and such that every element has only a finite number of non-zero entries above the main diagonal. For every $n \in \N$, the unitriangular group $U_{n}(\k)$ may be naturally identified as the subgroup of $U_{n+1}(\k)$ consisting of all matrices $x \in U_{n+1}(\k)$ which satisfy $x_{i,n+1} = 0$ for all $1 \leq i \leq n$, and thus $U_{\infty}(\k)$ may be realised as the direct limit $$U_{\infty}(\k) = \varinjlim_{n \in \N} U_{n}(\k) = \bigcup_{n \in \N} U_{n}(\k).$$ We also note that $U_{\infty}(\k) = 1+\fru_{\infty}(\k)$ where $\fru_{\infty}(\k) = \bigcup_{n\in \N} \fru_{n}(\k)$ is the locally finite dimensional nil $\k$-algebra consisting of all infinite upper-triangular square matrices over $\k$ with zeroes on the main diagonal; notice also that $\fru_{\infty}(\k)$ is naturally isomorphic to the direct limit $ \varinjlim_{n \in \N} \fru_{n}(\k)$..

The superclasses of an arbitrary algebra group $G = 1+\CA$ are easy to describe; since there is no danger of ambiguity, we will abbreviate the terminology and refer to a standard superclass simply as a superclass of $G$. Indeed, the group $\G = G\x G$ acts naturally on the left of $\CA$ by the rule $$(g,h) \cdot a = gah\inv,\qquad g,h \in G,\ a\in\CA.$$ Then, the $\k$-algebra $\CA$ is partitioned into $\G$-orbits $\G\cdot a = GaG$ for $a \in \CA$, and this determines a partition of the algebra group $G = 1+\CA$ into subsets $1 + \G\cdot a = 1+GaG$ for $a \in \CA$; these are precisely what we define as the \textit{superclasses} of $G$. We use the notation $\SCl(G)$ to the denote the set consisting of all superclasses of $G$; notice that $\{1\} \in \SCl(G)$ and that every $K \in \SCl(G)$ is a union of conjugacy classes.

We next define the set of supercharacters of $G$; as in the case of superclasses, by a supercharacter of $G$ we will always understand a standard supercharacter. A (continuous) function $\vphi \colon G \to \C$ is said to be \textit{supercentral} (or a \textit{superclass function}) if it is constant on the superclasses, that is, if $\vphi(1+gah) = \vphi(1+a)$ for all $g,h \in G$ and all $a \in \CA$. We denote by $\SCh(G)$ the set consisting of all normalised, supercentral, positive definite continuous functions defined on $G$; it is clear that $\SCh(G) \subseteq \Ch(G)$. As in the case of characters, $\SCh(G)$ is a convex set (in fact, a Choquet simplex); we denote by $\ISCh(G)$ the subset of $\SCh(G)$ consisting of all extreme elements of $\SCh(G)$. We refer to an element of $\SCh(G)$ as a \textit{supercharacter} of $G$, and to an element of $\ISCh(G)$ as an \textit{indecomposable supercharacter} of $G$.

In the case where $G=1+\CA$ is a finite algebra group, the indecomposable supercharacters are parametrised by the orbits of the contragradient action of $\G = G\x G$ on the \textit{Pontryagin dual} $\AC$ of the additive group $\CA^{+}$ of $\CA$ (hence, $\AC$ consists of all unitary characters $\map{\vartheta}{\CA^{+}}{\C^{\x}}$). For each of the natural actions of $G$ on $\CA$, there is a corresponding contragradient action of $G$ on $\AC$: given $\vartheta \in \AC$ and $g\in G$, we define $g\vartheta, \vartheta g \in \AC$ by the formulas $(g\vartheta)(a) = \vartheta(g\inv a)$ and $(\vartheta g)(a) = \vartheta(ag\inv)$ for all $a \in \CA$; thus, for every $\tet \in \AC$, we have a left $G$-orbit $G\vartheta$, a right $G$-orbit $\vartheta G$, and also a (left) $\G$-orbit $\G \cdot \vartheta = G\vartheta G$ (notice that the left and right $G$-actions on $\AC$ commute). For every $\G$-orbit $\CO \sset \AC$, we define the function $\map{\xi^{\CO}}{G}{\C^{\x}}$ by the rule
\begin{equation} \label{FSCh}
\xi^{\CO}(g) = \frac{1}{|\CO|} \sum_{\tet \in \CO} \tet(g-1), \qquad g \in G.
\end{equation}
By \cite[Theorem~5.6]{Diaconis2008a}), we know that $\ISCh(G) = \set{\xi^{\CO}}{\CO \in \Ome}$ where $\Ome$ denotes the set consisting of all $\G$-orbits on $\AC$.

The formula above may be interpreted in measure theoretical terms: the value of an indecomposable supercharacter $\xi^{\CO}(g)$ at an element $g \in G$ is given by the integral over $\CO$ evaluated at $g-1 \in \CA$ with respect to the \textit{unique} ergodic $\G$-invariant measure on $\AC$ supported on $\CO$; for an arbitrary countable discrete algebra group a similar phenomenon occurs. More precisely, in \reft{ErgodicCorrespondence} we establish a one-to-one correspondence between $\ISCh(G)$ and the set consisting of all $\G$-invariant measures on $\AC$ which are ergodic with respect to the (contragradient) action of $\G$ on $\AC$ (and this provides an alternative proof of \cite[Theorem~5.6]{Diaconis2008a}); as a consequence, we conclude that $\SCh(G)$ is indeed a Choquet simplex. While it is true that every ergodic $\G$-invariant measure must be supported on the closure of some $\G$-orbit (\refp{EOrbital}), and that, under the assumption that $G$ is amenable, the closure of an arbitrary $\G$-orbit must be the support of some ergodic $\G$-invariant measure (\refp{AmenableOrbital}), in general we can not guarantee that every indecomposable supercharacter is in one-to-one correspondence with the closure of a $\G$-orbit (this is because the closure of a $\G$-orbit may support distinct ergodic $\G$-invariant measures).

Another consequence is discussed in \refs{RegSCh} where we consider the regular character of $G$, and we obtain (\reft{FiniteSCl}) a necessary and sufficient condition for the regular character to be an indecomposable supercharacter of $G$; furthermore, the ergodic correspondence allows to establish (\refp{SuperPlancherel}) a supercharacter analogue for Thoma's Plancherel formula for countable discrete groups (see \cite[Satz~1]{Thoma1967a}). We should mention that these properties are the natural supercharacter generalisation of well-known properties of the regular character.  

Finally, in \refs{unitr} we consider approximately finite algebra groups $G$ (that is, direct limits of finite algebra groups), and use Lindenstrauss' pointwise ergodic theorem (\cite[Theorem~1.3]{Lindenstrauss2001a}) to approximate supercharacters of $G$ by supercharacters of finite algebra groups (\reft{FiniteApprox}); we illustrate this method by describing the supercharacters of the infinite unitriangular group $U_{n}(\k)$, where $\k$ is the algebraic closure of a finite field of prime characteristic.

\section{Supercharacters of algebra groups}

Throughout this section, we let $G=1+\CA$ be an arbitrary discrete countable algebra group associated with a nil algebra $\CA$ over a (countable discrete) field $\k$; furthermore, we consider the group $\G=G \times G$ acting naturally on the left of $\CA$ and on the Pontryagin dual $\AC$ of the additive group $\CA^{+}$ of $\CA$ (via the contragradient action). Our main goal is to parametrise the supercharacters of $G$ (as defined in the introduction) in terms of $\G$-invariant probability measures on $\AC$, in such a way that the indecomposable supercharacters of $G$ correspond to those measures which are ergodic with respect to the $\G$-action on $\AC$.

We equip $\AC$ with the topology induced by convergence on compact sets, which is nothing else than the topology of pointwise-convergence (because $\CA^{+}$ is discrete); we note that, since $\CA^{+}$ is abelian, $\AC$ is in fact the set consisting of all indecomposable characters of $\CA$. Furthermore, $\AC$ has a structure of an abelian topological compact group (see for example \cite[Proposition~4.35]{Folland1995a}) which we write additively and where the sum $\tet+\tet' \in \AC$ of two characters $\tet,\tet' \in \AC$ is determined by the pointwise product of functions, that is, $(\tet+\tet')(a) = \tet(a)\tet'(a)$ for all $a \in \CA$; the zero element of $\AC$ is the trivial character $\1_{\CA}$ which is constantly equal to $1$ (accordingly, we sometimes write $\0 = \1_{\CA}$).

We consider $\AC$ equipped with its Borel $\sigma$-algebra of measurable sets, and denote by $\CM(\AC)$ the vector space consisting of all finite complex regular Borel measures on $\AC$; we equip $\CM(\AC)$ with the topology of weak*-convergence. A measure $\mu \in \CM(\AC)$ is said to be \textit{$\G$-invariant} if $\m(\g \cdot B) = \m(B)$ for all $\g \in \G$ and all Borel subset $B$ of $\AC$; we denote by $\CM_{\G}(\AC)$ the vector space consisting of all $\G$-invariant measures on $\AC$. On the other hand, let $\CM^{+}(\AC)$ denote the subset of $\CM(\AC)$ consisting of all Borel probability measures, and let $\CM_{\G}^{+}(\AC)$ denote the subset of $\CM^{+}(\AC)$ consisting of all $\G$-invariant probability measures. Notice that $\CM_{\G}^{+}(\AC)$ (and hence $\CM_{\G}(\AC)$) is non-empty because the Dirac measure $\delta_{\1_{\CA}}$ supported on the trivial character $\1_{\CA} \in \AC$ is clearly a $\G$-invariant probability measure. Furthermore, $\CM_{\G}^{+}(\AC)$ is a Choquet simplex whose extreme elements are the ergodic measures in $\CM_{\G}^{+}(\AC)$ (see \cite[Section~12]{Phelps2001a} and \cite[Theorem~3.1]{Varadarajan1963a}); we recall that a $\G$-invariant probability measure $\m \in \CM_{\G}^{+}(\AC)$ is said to be \textit{ergodic} if, for every $\G$-invariant Borel subset $B$ of $\AC$, either $\m(B)=0$ or $\m(B)=1$ (equivalently, a measure $\m \in \CM_{\G}^{+}(\AC)$ is ergodic if and only if every $\G$-invariant function $f \in L^{2}(\AC,\m)$ is constant $\m$-almost everywhere; see, for example, \cite[Theorem~1.6]{Walters1982a}).

The main purpose of this section is to establish the existence of an affine homeomorphism between $\SCh(G)$ and $\CM^{+}_{\G}(\AC)$ (see \reft{ErgodicCorrespondence} below); in particular, we deduce that the indecomposable supercharacters of $G$ are in one-to-one correspondence with the ergodic measures in $\CM_{\G}^{+}(\AC)$. In order to prove it, we first assume a more general situation where $N$ is an arbitrary countable discrete group (not necessarily abelian), and $\G$ is a group which acts on the left of $N$ via continuous automorphisms; we write $\g \cdot n = \g(n)$ for all $\g \in \G$ and all $n \in N$. We define $\Ch_{\G}(N)$ to be the set of all characters of $N$ which are constant on every $\G$-orbit on $N$; notice that $\Ch_{\G}(N)$ is precisely the set of $\G$-invariant characters in $\Ch(N)$ with respect to the usual contragradient $\G$-action on $\Ch(N)$. As before, $\Ch_{\G}(N)$ is clearly a convex set, and thus we may consider the subset $\ICh_{\G}(N)$ of $\Ch_{\G}(N)$ consisting of all extreme elements of $\Ch_{\G}(N)$. Following the previous terminology, we refer to an element of $\Ch_{\G}(N)$ as a \textit{$\G$-character} of $N$, and to an element of $\ICh_{\G}(N)$ as an \textit{indecomposable $\G$-character} of $N$. Let $C^{*}(N)$ denote the group $C^*$-algebra of $N$ (see, for example, \cite[Section~13.9]{Dixmier1977a}); it follows from \cite[Proposition~6.8.7]{Dixmier1977a} that $\Ch(N)$ is a (compact convex) subset of the unit ball $\CB^*$ of the topological dual of $C^{*}(N)$, and hence \cite[Proposition 3.101]{Fabian2011a} implies that the Choquet simplex $\Ch(N)$ is metrisable (notice that in our situation the topology of weak*-convergence coincides with the topology of pointwise-convergence). If $\ICh(N)$ denotes the set of extreme elements of $\Ch(N)$ (that is, the indecomposable characters of $N$), then we know that every $\vphi \in \Ch(N)$ is the barycenter of a unique Borel probability measure $\mu^{\vphi}$ supported on $\ICh(N)$ (see \cite{Phelps2001a} for an exhaustive course on the theory of Choquet simplices; see also \cite{Alfsen1971a}). Therefore, if $C(\Ch(N))$ denotes the vector space consisting of all complex-valued continuous functions defined on $\Ch(N)$, then $$f(\vphi) = \int_{\ICh(N)} f(\tet)\; d\mu^{\varphi}(\tet), \qquad f \in C(\Ch(N));$$ in particular, if we choose the evaluation map at $n \in N$, then we easily deduce that
\begin{equation}\label{Choquet1}
\vphi(n) = \int_{\ICh(N)} \tet(n)\; d\mu^{\varphi}(\tet),\qquad n \in N
\end{equation}
(hence, $\mu^{\vphi}$ is the Choquet measure on $\ICh(N)$ associated with $\vphi$). The mapping $\vphi \mapsto \mu^{\vphi}$ defines a bijection $\Ch(N) \to \CM^{+}(\ICh(N))$ where $\CM^{+}(\ICh(N))$ denotes the set consisting of all Borel probability measures supported on $\ICh(N)$; for every $\mu \in \CM^{+}(\ICh(N))$, we denote by $\vphi^{\mu}$ the character $\vphi \in \Ch(N)$ such that $\mu^{\vphi} = \mu$, so that
\begin{equation}\label{Choquet1'}
\vphi^{\mu}(n) = \int_{\ICh(N)} \tet(n)\; d\mu(\tet),\qquad n \in N.
\end{equation}
By a well-known theorem of Bauer (see for example \cite[Proposition~11.1]{Phelps2001a}; see also \cite[Theorem~II.4.1]{Alfsen1971a}), the inverse bijection $\CM^{+}(\ICh(N)) \to \Ch(N)$ (given by the mapping $\mu \mapsto \vphi^{\mu}$) is  affine and continuous, and it is a homeomorphism if and only if $\ICh(N)$ is a closed subset of $\Ch(N)$, in which case $\Ch(N)$ is a Bauer simplex. (We mention that every metrisable Choquet simplex is affinely homeomorphic to the intersection of a decreasing sequence of metrisable Bauer simplices; see \cite[Theorem~9]{Edwards1975a}.) In particular, this holds in the case where $N$ is abelian where $\ICh(N)$ equals the Pontryagin dual $N^{\circ}$ of $N$, and hence is a compact subset of $\Ch(N)$ (recall that $N$ is discrete). The following result is essentially a consequence of \cite[Corollary~II.4.2]{Alfsen1971a}; we note that, since $\ICh_{\G}(N)$ is the subset consisting of $\G$-fixed elements of $\ICh(N)$, it is closed (and hence compact) whenever $\ICh(N)$ is a closed subset of $\Ch(N)$.

\begin{proposition} \label{SCh0}
Let $N$ be a countable discrete group such that $\ICh(N)$ is a closed subset of $\Ch(N)$, and let $\G$ be a group consisting of continuous automorphisms of $N$. Then, the mapping $\vphi \mapsto \mu^{\vphi}$ defines an affine homeomorphism between $\Ch_{\G}(N)$ and $\CM^{+}_{\G}(\ICh(N))$ with inverse given by the mapping $\mu \mapsto \vphi^{\mu}$. In particular, for every $\vphi \in \ICh_{\G}(N)$, the Choquet measure $\mu^{\vphi} \in \CM^{+}_{\G}(\ICh(N))$ is ergodic; conversely, if $\mu \in \CM^{+}_{\G}(\ICh(N))$ is ergodic, then the $\G$-character $\vphi^{\mu} \in \Ch_{\G}(N)$ is indecomposable.
\end{proposition}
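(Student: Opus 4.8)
The plan is to exhibit both $\Ch_{\G}(N)$ and $\CM^{+}_{\G}(\ICh(N))$ as the fixed-point sets of one and the same group of affine homeomorphisms of the Bauer simplex $\Ch(N)$, and then to transport the already available affine homeomorphism $\CM^{+}(\ICh(N)) \to \Ch(N)$ down to those fixed-point sets. First I would set up the three relevant $\G$-actions. The given left action of $\G$ on $N$ by (automatically continuous) automorphisms induces the contragradient action $(\g \cdot \vphi)(n) = \vphi(\g\inv \cdot n)$ on complex functions on $N$; since each $\g \in \G$ is a group automorphism, it fixes $1$, permutes the conjugacy classes of $N$, and preserves the Hermitian positivity condition, so this action restricts to an action on $\Ch(N)$ by affine maps which are homeomorphisms for the topology of pointwise convergence (equivalently, the weak*-topology, as noted in the excerpt). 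By construction $\Ch_{\G}(N)$ is exactly the set $\Ch(N)^{\G}$ of $\G$-fixed points; and since an affine bijection of $\Ch(N)$ preserves the property of being extreme, $\G$ also acts on $\ICh(N)$, so that $\ICh(N)^{\G}$ is closed, hence compact, because $\ICh(N)$ is closed in the compact set $\Ch(N)$. Dually, each $\g \in \G$ yields a homeomorphism $\tet \mapsto \g \cdot \tet$ of $\ICh(N)$, whose push-forward is an affine weak*-homeomorphism of $\CM^{+}(\ICh(N))$; this defines a $\G$-action whose fixed-point set, after unwinding the paper's definition of $\G$-invariance of a measure, is precisely $\CM^{+}_{\G}(\ICh(N))$. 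Both $\Ch(N)^{\G}$ and $\CM^{+}_{\G}(\ICh(N))$ are intersections of fixed-point sets of continuous self-maps, hence closed, and they are convex, hence compact convex subsets of their ambient compact convex sets.

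For the core step I would use that, because $\ICh(N)$ is closed, Bauer's theorem (\cite[Proposition~11.1]{Phelps2001a}) makes the map $b \colon \CM^{+}(\ICh(N)) \to \Ch(N)$, $\mu \mapsto \vphi^{\mu}$, an affine homeomorphism with inverse $\vphi \mapsto \mu^{\vphi}$. The key point is that $b$ is $\G$-equivariant: for $\g \in \G$, $\mu \in \CM^{+}(\ICh(N))$ and $n \in N$, writing $\g \cdot \mu$ for the push-forward of $\mu$ under $\tet \mapsto \g \cdot \tet$, one has by \refeq{Choquet1'}
\begin{align*}
b(\g \cdot \mu)(n) &= \int_{\ICh(N)} \tet(n)\, d(\g \cdot \mu)(\tet) = \int_{\ICh(N)} (\g \cdot \tet)(n)\, d\mu(\tet) \\
&= \int_{\ICh(N)} \tet(\g\inv \cdot n)\, d\mu(\tet) = b(\mu)(\g\inv \cdot n) = \bigl(\g \cdot b(\mu)\bigr)(n).
\end{align*}
A $\G$-equivariant affine homeomorphism maps the fixed-point set of its domain onto that of its codomain, and its restriction there is again an affine homeomorphism; by the first paragraph this restriction is exactly the pair of mutually inverse maps $\mu \mapsto \vphi^{\mu}$ and $\vphi \mapsto \mu^{\vphi}$ between $\CM^{+}_{\G}(\ICh(N))$ and $\Ch_{\G}(N)$. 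This is the packaging provided, in the present situation, by \cite[Corollary~II.4.2]{Alfsen1971a}.

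For the final assertion, recall that an affine homeomorphism of compact convex sets carries extreme points bijectively onto extreme points. Hence $\vphi \mapsto \mu^{\vphi}$ matches $\ICh_{\G}(N)$ — by definition the set of extreme points of $\Ch_{\G}(N)$ — bijectively with the set of extreme points of $\CM^{+}_{\G}(\ICh(N))$; and by the standard description of the simplex of invariant probability measures (\cite[Section~12]{Phelps2001a}, \cite[Theorem~3.1]{Varadarajan1963a}) the latter are exactly the ergodic members of $\CM^{+}_{\G}(\ICh(N))$. Reading this in both directions yields the two claims: $\vphi \in \ICh_{\G}(N)$ implies $\mu^{\vphi}$ ergodic, and $\mu \in \CM^{+}_{\G}(\ICh(N))$ ergodic implies $\vphi^{\mu}$ extreme in $\Ch_{\G}(N)$, i.e.\ an indecomposable $\G$-character.

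I expect the substantive ingredients to be entirely quoted — metrisability and compactness of $\Ch(N)$, Bauer's theorem for $b$, and the identification of extreme invariant measures with ergodic ones — so that the real work is bookkeeping: verifying that the contragradient action is well defined on $\Ch(N)$ and acts by homeomorphisms, that the paper's measure-theoretic notion of $\G$-invariance coincides with being fixed under push-forward on $\CM^{+}(\ICh(N))$, and that the restriction of an affine homeomorphism to a pair of corresponding closed invariant convex subsets is again an affine homeomorphism. None of these is hard, but each should be stated explicitly rather than left implicit.
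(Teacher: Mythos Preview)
Your proposal is correct and follows essentially the same route as the paper: both arguments reduce to the equivariance of the barycenter map $\mu \mapsto \vphi^{\mu}$ under the $\G$-actions (your displayed computation is the same change of variables the paper performs), together with Bauer's theorem and the identification of extreme invariant measures with ergodic ones via \cite[Section~12]{Phelps2001a}. The only difference is packaging---you phrase it as ``equivariant affine homeomorphism restricted to fixed-point sets'', while the paper argues directly that $\vphi$ is $\G$-invariant if and only if $\mu^{\vphi}$ is---but the substantive content is identical.
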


\begin{proof}
Let $\vphi \in \Ch(N)$ and $\g \in \G$ be arbitrary. Since the Choquet measures $\mu^{\vphi}$ and $\mu^{\g\cdot\vphi}$ are uniquely determined by the characters $\vphi$ and $\g\cdot \vphi$, \refeq{Choquet1} implies that $\g\cdot\vphi = \vphi$ if and only if $\mu^{\vphi} = \mu^{\g\cdot\vphi}$. On the other hand, for every $n \in N$, we evaluate
\begin{align*}
(\g\cdot\vphi)(n) &= \vphi(\g\inv \cdot n) = \int_{\ICh(N)} \tet(\g\inv\cdot n)\; d\mu^{\vphi} \\ &= \int_{\ICh(N)} (\g\cdot \tet)(n)\; d\mu^{\vphi} = \int_{\ICh(N)} \tet(n)\; d(\g\inv\cdot\mu^{\vphi}),
\end{align*}
and thus $\mu^{\g\cdot\vphi} = \g\inv\cdot\mu^{\vphi}$. Therefore, $\g\cdot\vphi = \vphi$ if and only if $\mu^{\vphi} = \g\inv \cdot\mu^{\vphi}$, and this clearly implies that $\vphi \in \Ch_{\G}(N)$ if and only if $\mu^{\vphi} \in \CM^{+}_{\G}(\ICh(N))$.

Conversely, let $\m \in \CM^{+}_{\G}(\ICh(N))$ be arbitrary, and consider the character $\vphi^{\mu} \in \Ch(N)$ defined as in \refeq{Choquet1'}. Since $\m$ is $\G$-invariant, we easily deduce that $(\g\cdot\vphi^{\mu})(n) = \vphi^{\mu}(n)$ for all $\g \in \G$ and all $n \in N$, and hence $\vphi^{\mu} \in \Ch_{\G}(N)$. This completes the proof of the first assertion of the proposition.

Finally, we know that a measure in $\CM_{\G}^{+}(\ICh(N))$ is extreme if and only if it is ergodic (see \cite[Section~12]{Phelps2001a}), and thus $\mu^{\vphi} \in \CM_{\G}^{+}(\ICh(N))$ is ergodic for all $\vphi \in \ICh_{\G}(N)$.
\end{proof}

We now return to the previous situation where $G=1+\CA$ be an arbitrary discrete countable algebra group associated with a nil algebra $\CA$ over a (countable discrete) field $\k$; furthermore, we let $\G=G \times G$, and consider the natural action of $\G$ on $\AC$. For every $g \in G$, let $\map{\ev_{g}}{\AC}{\C}$ denote the evaluation map at $g-1 \in \CA$, that is, $\ev_{g}(\tet) = \tet(g-1)$ for all $\tet \in \AC$. As a consequence of Pontryagin duality theorem (see \cite[Theorem~4.31]{Folland1995a}), we easily conclude that $\set{\ev_{g}}{g \in G}$ is a linearly independent set of $C(\AC)$ (the vector space consisting of all complex continuous functions on $\AC$), which is closed under pointwise multiplication (indeed, $\ev_{1+a}\ev_{1+b}=\ev_{1+a+b}$, for all $a,b \in \CA$); moreover, in virtue of the Stone-Weierstrass theorem (see \cite[Theorem~11.3.1]{Dixmier1977a}), the $\C$-linear span $\ev(G)$ of $\set{\ev_{g}}{g \in G}$ is a dense $C^{\ast}$-subalgebra of $C(\AC)$.

The following elementary result will be useful.

\begin{lemma}\label{LemmaProduct}
For every $\mu \in \CM_{\G}^{+}(\AC)$ and every $g,h \in G$, we have $$\int_{\AC} \ev_{g} \ovl{\ev_{h}}\; d\mu = \int_{\AC}\ev_{h^{-1}g}\; d\mu.$$
\end{lemma}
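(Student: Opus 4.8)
The plan is to unwind the definitions of the evaluation maps and use the fact that the assignment $g \mapsto \ev_g$ converts group multiplication (via the algebra-group structure) into pointwise multiplication of functions on $\AC$, combined with the $\G$-invariance of $\mu$. First I would record the two basic identities: $\ev_{1+a}\ev_{1+b} = \ev_{1+a+b}$ for $a,b \in \CA$ (already noted in the excerpt), and $\ovl{\ev_h} = \ev_{h\inv}$. The latter holds because $\tet$ takes values in the unit circle, so $\ovl{\tet(h-1)} = \tet(h-1)\inv = \tet(-(h-1))$; writing $h = 1+b$ we have $-(h-1) = -b$, and I want to identify $-b$ with $(h\inv - 1)$ up to something that disappears. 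This is the one place a small subtlety enters: $h\inv = 1 + (-b + b^2 - b^3 + \cdots)$, which is $1 - b$ plus higher order terms, so $\ev_{h\inv} \neq \ev_{1-b}$ in general. So $\ovl{\ev_h}$ is genuinely $\ev_{1-b}$, not $\ev_{h\inv}$, and the naive approach does not immediately work; I would instead argue directly.

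The cleaner route is to use $\G$-invariance of $\mu$ under the contragradient action. Concretely, for $g = 1+a$ and $h = 1+b$, I compute the integrand pointwise: for $\tet \in \AC$,
\begin{equation*}
\ev_g(\tet)\,\ovl{\ev_h(\tet)} = \tet(a)\,\ovl{\tet(b)} = \tet(a)\tet(-b) = \tet(a-b).
\end{equation*}
On the other hand, for $h\inv g$, note that $h\inv g - 1 = h\inv(g - h) = h\inv(a - b)$, since $g - h = (1+a) - (1+b) = a - b$. Thus $\ev_{h\inv g}(\tet) = \tet\big(h\inv(a-b)\big)$. Now I use $\G$-invariance: the element $\g = (1, h) \in \G = G \times G$ acts on $\AC$ by $(\g \cdot \tet)(c) = \tet(c \cdot h) = \tet(ch)$ wait — I would need to be careful with which side, but the point is that $c \mapsto h\inv(a-b)$ and $c \mapsto a-b$ differ by precisely the $\G$-action of an element of $\G$ on the argument, hence by the $\G$-action on $\tet$. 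Therefore
\begin{equation*}
\int_{\AC} \ev_{h\inv g}(\tet)\, d\mu(\tet) = \int_{\AC} \tet\big(h\inv(a-b)\big)\, d\mu(\tet) = \int_{\AC} (\g\inv \cdot \tet)(a-b)\, d\mu(\tet) = \int_{\AC} \tet(a-b)\, d\mu(\tet),
\end{equation*}
where the last equality is the change of variables $\tet \mapsto \g \cdot \tet$ together with $\m(\g \cdot B) = \m(B)$. This matches $\int_{\AC} \ev_g \ovl{\ev_h}\, d\mu$ by the pointwise computation above, which is exactly the claim.

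The main obstacle, such as it is, is purely bookkeeping: getting the left/right conventions for the $\G = G \times G$ action on $\CA$ and on $\AC$ exactly right, so that the factor $h\inv$ appearing in $h\inv g - 1 = h\inv(a-b)$ is indeed absorbed by an honest element of $\G$ acting by the contragradient action, and verifying that the "change of variables" step is legitimately an application of $\G$-invariance of $\m$ rather than something stronger. I would handle this by fixing the convention $(g,h)\cdot a = gah\inv$ from the excerpt, noting the induced contragradient action on $\AC$ is $((g,h)\cdot\tet)(a) = \tet\big(g\inv a h\big)$, and then checking that taking $(g,h) = (1,h\inv)$ sends the argument $a-b$ to $h\inv(a-b)$ as required; everything else is a one-line substitution. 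No deeper input (no Pontryagin duality, no Stone–Weierstrass) is needed for this particular lemma — only the group law of $G$, unitarity of characters, and invariance of $\m$.
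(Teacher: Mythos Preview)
Your argument is correct and follows essentially the same route as the paper: both proofs reduce to the unitarity identity $\ovl{\tet(b)} = \tet(-b)$, the algebra-group relation $h^{-1}g - 1 = h^{-1}(a-b)$, and a single change of variables using the $\G$-invariance of $\mu$. Your version is arguably a touch more transparent, since you compute both integrands directly as $\tet(a-b)$ and $\tet(h^{-1}(a-b))$ before invoking invariance, whereas the paper routes the same computation through the intermediate identity $\ovl{\ev_{h}(\tet)} = \ev_{h^{-1}}(h^{-1}\tet)$; but the content is identical.

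One small bookkeeping slip to fix: with the convention $(g',h')\cdot c = g' c (h')^{-1}$, the element $(1,h^{-1})$ sends $a-b$ to $(a-b)h$, not to $h^{-1}(a-b)$. The element you actually want is $(h,1)$ on the $\AC$ side (equivalently $(h^{-1},1)$ on the $\CA$ side), since $((h,1)\cdot\tet)(c) = \tet(h^{-1}c)$, which is exactly what turns $\tet(a-b)$ into $\tet(h^{-1}(a-b))$. This is precisely the left/right issue you flagged, and once corrected the argument goes through verbatim.
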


\begin{proof}
Let $g,h \in G$ be arbitrary, and let $a,b \in \CA$ be such that $g=1+a$ and $h=1+b$. A straightforward calculation shows that $\ev_{h}(h\tet) = \ovl{\ev_{h^{-1}}(\tet)}$ for all $\tet \in \AC$ (recall that $h\tet = (h,1)\cdot \tet$ for all $\tet \in \AC$), and thus $$\int_{\AC} \ev_{g}(\tet) \ovl{\ev_{h}(\tet)}\; d\mu = \int_{\AC} \ev_{g}(\tet) \ev_{h^{-1}}(h^{-1}\tet)\; d\mu = \int_{\AC} \ev_{g}(h\tet)\ev_{h^{-1}}(\tet)\; d\mu$$ (because $\mu$ is $\G$-invariant). Since $\ev_{g}(h\tet) \ev_{h^{-1}}(\tet)=\ev_{h^{-1}g}(\tet)$ for all $\tet \in \AC$, we conclude that $$\int_{\AC} \ev_{g}(h\tet) \ev_{h^{-1}}(\tet)\; d\mu = \int_{\AC} \ev_{h^{-1}g}(\tet)\;d\mu,$$ and this completes the proof.
\end{proof}

Now, let $\xi \in \SCh(G)$ be an arbitrary supercharacter of $G$. Then, the mapping $\ev_{g} \mapsto \xi(g)$ extends by linearity to a $\C$-linear map $\map{\Phi_{0}}{\ev(G)}{\C}$; furthermore, since $\ev(G)$ is a dense $C^{\ast}$-subalgebra of $C(\AC)$, $\Phi_{0}$ extends by continuity to a continuous linear function $\map{\Phi}{C(\AC)}{\C}$. Therefore, by the Riesz-Markov-Kakutani representation theorem (\cite[Theorem~6.19]{Rudin1987a}), there is a unique measure $\mu^{\xi} \in \CM(\AC)$ such that $$\xi(g) = \int_{\AC} \ev_{g}(\tet)\; d\mu^{\xi}, \qquad  g \in G;$$ moreover, the fact that $\xi \in \SCh(G)$ ensures that $\mu^{\xi} \in \CM_{\G}(\AC)$.

\begin{proposition}\label{SCh}
If $\xi \in \SCh(G)$ is an arbitrary supercharacter of $G$, then $\mu^{\xi} \in \CM_{\G}(\AC)$ is a probability measure (hence, $\mu^{\xi} \in \CM_{\G}^{+}(\CA)$).
\end{proposition}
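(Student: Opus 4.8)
The plan is to verify the two defining properties of a Borel probability measure separately: that $\mu^{\xi}(\AC)=1$, and that $\mu^{\xi}$ is a positive measure. The total mass is immediate: since $\ev_{1}(\tet)=\tet(1-1)=\tet(\0)=1$ for every $\tet\in\AC$, the constant function $1$ on $\AC$ is precisely $\ev_{1}\in\ev(G)$, and hence $\mu^{\xi}(\AC)=\int_{\AC}\ev_{1}\,d\mu^{\xi}=\xi(1)=1$, the last equality because $\xi$ is normalised.

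For positivity, the idea is to transport the positive-definiteness of $\xi$ through the integral representation. Fixing $\seq{g}{m}\in G$ and $\seq{z}{m}\in\C$, I would use that $\xi$ is central and $g_{i}g_{j}^{-1}$ is conjugate to $g_{j}^{-1}g_{i}$, so that $\xi(g_{i}g_{j}^{-1})=\xi(g_{j}^{-1}g_{i})$, and then invoke \refl{LemmaProduct} with $g=g_{i}$, $h=g_{j}$ to get $\xi(g_{j}^{-1}g_{i})=\int_{\AC}\ev_{g_{i}}\,\ovl{\ev_{g_{j}}}\,d\mu^{\xi}$. Summing over $i,j$ and pulling the finite sum inside the integral,
\[
\sum_{i,j}\xi(g_{i}g_{j}^{-1})\,z_{i}\ovl{z_{j}}=\int_{\AC}\Big|\sum_{i=1}^{m}z_{i}\,\ev_{g_{i}}\Big|^{2}\,d\mu^{\xi},
\]
and the left-hand side is $\geq 0$ because $\xi$ is positive definite. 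As $f=\sum_{i}z_{i}\ev_{g_{i}}$ ranges over the whole linear span $\ev(G)$, this shows $\int_{\AC}|f|^{2}\,d\mu^{\xi}\geq 0$ for every $f\in\ev(G)$.

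It then remains to upgrade this to an arbitrary nonnegative $g\in C(\AC)$. Using that $\ev(G)$ is dense in $C(\AC)$, pick $f_{n}\in\ev(G)$ with $f_{n}\to\sqrt{g}$ uniformly; then $|f_{n}|^{2}\to g$ uniformly (since $\big\||f_{n}|^{2}-g\big\|_{\infty}\leq\|f_{n}-\sqrt{g}\|_{\infty}\big(\|f_{n}\|_{\infty}+\|\sqrt{g}\|_{\infty}\big)$), and as $\mu^{\xi}$ is a finite measure, $\int_{\AC}g\,d\mu^{\xi}=\lim_{n}\int_{\AC}|f_{n}|^{2}\,d\mu^{\xi}\geq 0$. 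Equivalently, the functional $\Phi$ of which $\mu^{\xi}$ is the Riesz--Markov--Kakutani representative is positive, so its representing measure is positive, and by the uniqueness in that theorem $\mu^{\xi}\geq 0$. Together with $\mu^{\xi}(\AC)=1$, this yields $\mu^{\xi}\in\CM_{\G}^{+}(\AC)$.

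I expect the only point requiring care to be the index bookkeeping in the displayed identity: one genuinely needs centrality of $\xi$ (equivalently, \refl{LemmaProduct}) to identify $\xi(g_{i}g_{j}^{-1})$ with $\int_{\AC}\ev_{g_{i}}\ovl{\ev_{g_{j}}}\,d\mu^{\xi}$, since the assignment $g\mapsto\ev_{g}$ is only additive on $\CA$ and not a group homomorphism on $G$. The density step is routine once the inclusion $\ev(G)\subseteq C(\AC)$ is known to be dense.
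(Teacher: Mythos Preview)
Your proof is correct and follows essentially the same approach as the paper's: both combine \refl{LemmaProduct} with the positive-definiteness of $\xi$ to show $\int_{\AC}|f|^{2}\,d\mu^{\xi}\geq 0$ for every $f\in\ev(G)$, and then pass to general nonnegative functions by density. The only cosmetic difference is that the paper carries out the density step by approximating indicator functions $\I_{B}$ (using $\I_{B}=|\I_{B}|^{2}$) to obtain $\mu^{\xi}(B)\geq 0$ directly, whereas you approximate $\sqrt{g}$ uniformly in $C(\AC)$ to conclude that the Riesz functional is positive; your version is arguably tidier since it avoids speaking of $L^{1}(\AC,\mu^{\xi})$ before $\mu^{\xi}$ is known to be positive.
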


\begin{proof}
Let $B$ be an arbitrary Borel subset of $\AC$, and let $\I_{B} \in L^{1}(\AC,\mu^{\xi})$ be the corresponding indicator function. Since $\ev(G)$ is a dense subalgebra of $C(\AC)$, its image in $L^{1}(\AC,\mu^{\xi})$ is also dense, and hence there are families $$\set{\alpha_{i,n} \in \C}{1 \leq i \leq n,\ n \in \N}\quad \text{and}\quad \set{g_{i,n} \in G}{1 \leq i \leq n,\ n \in \N}$$ such that $$\I_{B}(\tet) = \lim_{n \to \infty} \sum_{1 \leq i \leq n} \alpha_{i,n} \ev_{g_{i,n}}(\tet),\qquad \text{for $\mu$-almost all $\tet \in \AC$.}$$ Since $\I_{B}(\tet) = \I_{B}(\tet)\,\ovl{\I_{B}(\tet)}$ for all $\tet \in \AC$, we deduce that $$\mu^{\xi}(B) = \lim_{n \to \infty} \sum_{1 \leq i,j \leq n} \alpha_{i,n}\, \overline{\alpha_{j,n}} \int_{\AC} \ev_{g_{i,n}}(\tet)\,\ovl{\ev_{g_{j,n}}(\tet)}\; d\mu^{\xi},$$ and thus \refl{LemmaProduct} implies that
\begin{align*}
\mu^{\xi}(B) &= \lim_{n \to \infty} \sum_{1 \leq i,j\leq n} \alpha_{i,n}\,\ovl{\alpha_{j,n}} \int_{\AC} \ev_{g_{j,n}\inv g_{i,n}}(\tet)\; d\mu^{\xi} \\ &= \lim_{n \to \infty} \sum_{1 \leq i,j \leq n} \alpha_{i,n}\,\ovl{\alpha_{j,n}}\, \xi(g_{j,n}\inv g_{i,n}).
\end{align*}
Since $\xi \in \SCh(G)$ is a positive-definite function, we know that $$\sum_{1 \leq i,j \leq n} \alpha_{i,n}\,\ovl{\alpha_{j,n}}\, \xi(g_{j,n}\inv g_{i,n}) \geq 0, \qquad n \in \N,$$ and thus $\mu^{\xi}(B) \geq 0$. Finally, since $\ev_{1}(\tet) = \tet(0) = 1$, we see that $$\mu^{\xi}(\AC) = \int_{\AC} \ev_{1}(\tet)\; d\mu^{\xi}  = \xi(1) = 1,$$ and so $\mu$ is a probability measure, as required.
\end{proof}

In virtue of \refp{SCh0}, we conclude that, for every supercharacter $\xi \in \SCh(G)$, there is a unique $\G$-character $\zeta \in \Ch_{\G}(\CA)$ such that $$\zeta(a) = \int_{\AC} \tet(a)\, d\mu^{\xi}, \qquad a \in \CA,$$ and thus $\mu^{\xi} = \mu^{\zeta}$ is the Choquet measure which is associated with $\zeta$; in particular, we see that $\xi(g) = \zeta(g-1)$ for all $g \in G$. Conversely, for every $\mu \in \CM^{+}_{\G}(\AC)$, we consider the $\G$-character $\zeta^{\mu} \in \Ch_{\G}(\CA)$ and define the function $\map{\xi^{\mu}}{G}{\C}$ by $\xi^{\mu}(g) = \zeta^{\mu}(g-1)$ for all $g \in G$; hence, $$\xi^{\mu}(g) = \int_{\AC} \tet(g-1)\; d\mu = \int_{\AC} \ev_{g}(\tet)\; d\mu,\qquad  g \in G.$$  It is clear that the function $\xi^{\mu}$ is constant on the superclasses of $G$, and that it is normalised (that is, $\xi^{\mu}(1) = 1$); thus, in order to conclude that $\xi^{\mu} \in \SCh(G)$, it remains to trove that $\xi^{\mu}$ is definite positive. We recall that, according to the Gelfand-Naimark-Segal construction, a function $\map{\vphi}{G}{\C}$ is a character if and only if there is a unitary representation $(\pi,\CH)$ and a cyclic vector $v \in \CH$ such that $$\vphi(g) = \langle \pi(g)v \mid v \rangle , \qquad g \in G,$$ where $\langle \cdot \mid \cdot \rangle$ denotes the inner product of $\CH$ (for details, we refer to \cite[Proposition~2.4.4]{Dixmier1977a}).

\begin{proposition} \label{PosDef}
For every $\mu \in \CM_{\G}^{+}(\AC)$, the function $\map{\xi^{\mu}}{G}{\C}$ is a supercharacter of $G$.
\end{proposition}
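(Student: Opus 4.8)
The plan is to verify the three defining properties of a supercharacter for the function $\xi^{\mu}(g) = \int_{\AC} \ev_{g}(\tet)\, d\mu(\tet)$. As already noted, normalisation and supercentrality are immediate: $\xi^{\mu}(1) = \int_{\AC} \ev_{1}\, d\mu = \int_{\AC} 1\, d\mu = 1$ because $\mu$ is a probability measure (\refp{SCh}), while for $a \in \CA$ and $g,h \in G$ one has $\xi^{\mu}(1+gah\inv) = \int_{\AC} \tet(gah\inv)\, d\mu = \int_{\AC} \big((g\inv,h\inv)\cdot\tet\big)(a)\, d\mu = \int_{\AC} \tet(a)\, d\mu = \xi^{\mu}(1+a)$ by the $\G$-invariance of $\mu$. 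So the real content is that $\xi^{\mu}$ is positive definite, and I would obtain this by exhibiting an explicit Gelfand--Naimark--Segal triple, using the characterisation of characters via the GNS construction recalled just before the statement.

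Concretely, take $\CH = L^{2}(\AC,\mu)$ and, for each $g \in G$, define $\map{\pi(g)}{\CH}{\CH}$ by $(\pi(g)f)(\tet) = \ev_{g}(\tet)\, f(g\inv\tet)$, where $g\inv\tet = (g\inv,1)\cdot\tet$ is the left $G$-action on $\AC$. Each $\pi(g)$ is unitary: multiplication by $\ev_{g}$ is unitary since $|\ev_{g}(\tet)| = |\tet(g-1)| = 1$ for every $\tet \in \AC$ (unitary characters of $\CA^{+}$ are circle-valued), and $f \mapsto f(g\inv\,\cdot\,)$ is unitary because $\mu$ is $\G$-invariant, hence invariant under the left $G$-action. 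The step that requires care is that $\pi$ is a \emph{homomorphism}: the naive choice ``$\pi(g) = $ multiplication by $\ev_{g}$'' fails because $\ev$ is only additive ($\ev_{1+a}\ev_{1+b} = \ev_{1+a+b}$) and not multiplicative on $G$, and twisting by the $G$-action on $\AC$ is exactly what repairs the resulting cocycle. Explicitly, for $g,h \in G$ and $\tet \in \AC$ one computes $\ev_{h}(g\inv\tet) = (g\inv\tet)(h-1) = \tet\big(g(h-1)\big)$, so that $\ev_{g}(\tet)\,\ev_{h}(g\inv\tet) = \tet(g-1)\,\tet(gh-g) = \tet(gh-1) = \ev_{gh}(\tet)$; consequently $(\pi(g)\pi(h)f)(\tet) = \ev_{g}(\tet)\,\ev_{h}(g\inv\tet)\, f(h\inv g\inv\tet) = \ev_{gh}(\tet)\, f\big((gh)\inv\tet\big) = (\pi(gh)f)(\tet)$.

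Finally, take the vector $v = \1$, the constant function $1 = \ev_{1}$ on $\AC$. Then $\pi(g)v = \ev_{g}$, so $\langle \pi(g)v \mid v \rangle = \int_{\AC} \ev_{g}(\tet)\, d\mu(\tet) = \xi^{\mu}(g)$ for all $g \in G$; and $v$ is cyclic, since the linear span of $\set{\pi(g)v}{g \in G} = \set{\ev_{g}}{g \in G}$ is $\ev(G)$, which is dense in $C(\AC)$ and therefore dense in $\CH = L^{2}(\AC,\mu)$. By the GNS characterisation of characters, $\xi^{\mu}$ is a character of $G$; combined with the normalisation and supercentrality established above, this yields $\xi^{\mu} \in \SCh(G)$.

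I expect the homomorphism check to be the only genuine obstacle, and even that only in the sense that one must recognise the correct operator as a product of a multiplication and a translation rather than a bare multiplication; the rest is bookkeeping. (Alternatively, one can bypass the GNS machinery and argue directly: $\xi^{\mu}(g\inv) = \ovl{\xi^{\mu}(g)}$ follows from the identity $\ev_{h}(h\tet) = \ovl{\ev_{h\inv}(\tet)}$ from the proof of \refl{LemmaProduct} together with $\G$-invariance, while for $\seq{g}{m} \in G$ and $\seq{z}{m} \in \C$, \refl{LemmaProduct} gives $\sum_{i,j} z_{i}\ovl{z_{j}}\, \xi^{\mu}(g_{j}\inv g_{i}) = \sum_{i,j} z_{i}\ovl{z_{j}} \int_{\AC} \ev_{g_{i}}\ovl{\ev_{g_{j}}}\, d\mu = \int_{\AC} \big|\sum_{i} z_{i}\ev_{g_{i}}\big|^{2}\, d\mu \geq 0$.)
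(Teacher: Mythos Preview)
Your proof is correct and follows essentially the same route as the paper: the same Hilbert space $L^{2}(\AC,\mu)$, the same operators $(\pi(g)f)(\tet)=\ev_{g}(\tet)f(g\inv\tet)$, the same cyclic vector $\ev_{1}$, and the same GNS conclusion. If anything, your write-up is slightly more thorough, since you verify the homomorphism identity $\ev_{g}(\tet)\ev_{h}(g\inv\tet)=\ev_{gh}(\tet)$ explicitly (the paper merely asserts that $g\mapsto\CT^{\mu}(g)$ is a representation), and your alternative direct positivity argument via \refl{LemmaProduct} is a nice bonus.
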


\begin{proof}
In order to see that $\xi^{\mu}$ is definite positive, we show that it is the character of $G$ afforded by an explicit cyclic representation of $G$. Let $\CH^{\mu}$ denote the Hilbert space $L^{2}(\AC,\mu)$; for every $f \in \CH^{\mu}$ and every $g \in G$, we define the linear operator $\map{\CT^{\mu}(g)}{\CH^{\mu}}{\CH^{\mu}}$ by $$(\CT^{\mu}(g)f)(\tet) = \ev_{g}(\tet) f(g\inv\tet),\qquad \tet \in \AC.$$ We claim that, for every $g \in G$, the operator $\CT^{\mu}(g)$ is unitary. To check this, let $g \in G$ be arbitrary, and compute the adjoint operator of $\CT^{\mu}(g)$: for every $f_{1}, f_{2} \in \CH^{\mu}$, we evaluate
\begin{align*}
\langle \CT^{\mu}(g)f_{1}\mid f_{2}\rangle &= \int_{\AC} \ev_{g}(\tet) f_{1}(g\inv\tet) \ovl{f_{2}(\tet)}\; d\mu = \int_{\AC}\ev_{g}(g\tet) f_{1}(\tet) \ovl{f_{2}(g\tet)}\; d\mu \\ &= \int_{\AC} f_{1}(\tet) \ovl{\ev_{g\inv}(\tet) f_{2}(g\tet)}\; d\mu = \langle f_{1}\mid \CT^{\mu}(g\inv)f_{2} \rangle
\end{align*}
(in the second equality, we took into account that the measure $\mu$ is $\G$-invariant), and thus the adjoint operator of $\CT^{\mu}(g)$ is $\CT^{\mu}(g\inv)$. On the other hand, we clearly have $\CT^{\mu}(g) \CT^{\mu}(g\inv) = \CT^{\mu}(g\inv)\CT^{\mu}(g) = \id_{\CH^{\mu}}$ where $\map{\id_{\CH^{\mu}}}{\CH^{\mu}}{\CH^{\mu}}$ denotes the identity operator, and hence $\CT^{\mu}(g)$ is unitary, as claimed. Furthermore, it is easy to see that the mapping $g \mapsto \CT^{\mu}(g)$ defines a unitary representation of $G$ on $\CH^{\mu}$.

Finally, since $\ev(G)$ is a dense subalgebra in $C(\AC)$, its image in $\CH^{\mu}$ is also dense; moreover, we have $\ev_{g} = \CT^{\mu}(g) \ev_{1}$ for all $g \in G$, and this implies that $\ev_{1}$ is a cyclic vector of $\CH^{\mu}$, and that the representation $(\CT^{\mu},\CH^{\mu})$ affords the character given by the formula $$\langle \CT^{\mu}(g)\ev_{1} \mid \ev_{1} \rangle = \int_{\AC}\ev_{g}(\tet)\; d\mu = \xi^{\mu}(g),\qquad g \in G,$$ as required.
\end{proof}

For every $\mu \in \CM_{\G}^{+}(\AC)$, we refer to the representation $(\CT^{\mu},\CH^{\mu})$, as defined in the previous proof, as the \textit{super-representation} of $G$ associated with $\mu$.

The following result is now a clear consequence of \refp{SCh0} (and also of Propositions~\ref{SCh} and \ref{PosDef}); for the last assertion, we note that a supercharacter $\xi \in \SCh(G)$ is indecomposable if and only if the corresponding $\G$-character $\zeta \in \Ch_{\G}(\CA)$ is indecomposable.

\begin{theorem} \label{ErgodicCorrespondence}
Let $G=1+\CA$ be a countable discrete algebra group (associated with a nil $\k$-algebra $\CA$). Then, the mapping $\xi \mapsto \mu^{\xi}$ defines an affine homeomorphism between $\SCh(G)$ and $\CM^{+}_{\G}(\AC)$ with inverse given by the mapping $\mu \mapsto \xi^{\mu}$. In particular, the indecomposable supercharacters of $G$ are in one-to-one correspondence with the ergodic measures in $\CM_{\G}^{+}(\AC)$.
\end{theorem}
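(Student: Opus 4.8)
The plan is to reduce the statement to \refp{SCh0} applied to the abelian group $N=\CA^{+}$. Here $\CA^{+}$ is a countable discrete group (since $G=1+\CA$ is countable), its set of indecomposable characters $\ICh(\CA^{+})$ is the compact Pontryagin dual $\AC$ and hence a closed subset of $\Ch(\CA^{+})$, and $\G=G\times G$ acts on $\CA^{+}$ by the continuous additive automorphisms $a\mapsto gah^{-1}$; so the hypotheses of \refp{SCh0} are satisfied. That proposition yields an affine homeomorphism $\zeta\mapsto\mu^{\zeta}$ between $\Ch_{\G}(\CA)$ and $\CM^{+}_{\G}(\AC)$, with inverse $\mu\mapsto\zeta^{\mu}$, under which the indecomposable $\G$-characters of $\CA$ correspond exactly to the ergodic measures in $\CM^{+}_{\G}(\AC)$.

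Next I would make precise the ``change of variables'' bijection between $\SCh(G)$ and $\Ch_{\G}(\CA)$. The bijection $G\to\CA$, $g\mapsto g-1$, carries the superclasses $1+GaG$ of $G$ onto the $\G$-orbits $GaG$ of $\CA$ and sends $1\in G$ to $0\in\CA$, so precomposition with $a\mapsto 1+a$ maps normalised $\G$-invariant functions on $\CA$ bijectively onto normalised supercentral functions on $G$, and this reindexing is visibly affine and a homeomorphism for the respective topologies of pointwise convergence. The one nontrivial point is that it matches positive-definite functions on the two sides, but this is exactly what \refp{SCh} and \refp{PosDef} supply: if $\xi\in\SCh(G)$ then \refp{SCh} gives $\mu^{\xi}\in\CM^{+}_{\G}(\AC)$, whence $\zeta:=\zeta^{\mu^{\xi}}\in\Ch_{\G}(\CA)$ satisfies $\zeta(a)=\int_{\AC}\tet(a)\,d\mu^{\xi}$ and $\xi(g)=\zeta(g-1)$; conversely, given $\zeta\in\Ch_{\G}(\CA)$, equivalently $\mu=\mu^{\zeta}\in\CM^{+}_{\G}(\AC)$, \refp{PosDef} shows that $g\mapsto\zeta(g-1)=\xi^{\mu}(g)$ lies in $\SCh(G)$. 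Thus $\xi\mapsto\zeta$, with $\zeta(a)=\xi(1+a)$, is a well-defined affine homeomorphism $\SCh(G)\to\Ch_{\G}(\CA)$.

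Composing the two affine homeomorphisms gives an affine homeomorphism $\SCh(G)\to\CM^{+}_{\G}(\AC)$; by construction it sends $\xi$ to the Choquet measure $\mu^{\zeta}$ of the associated $\G$-character $\zeta$ of $\CA$, which by the uniqueness in the Riesz--Markov construction preceding \refp{SCh} is precisely the measure $\mu^{\xi}$ of the statement, and its inverse sends $\mu$ to $\zeta^{\mu}(\,\cdot-1)=\xi^{\mu}$. Finally, since the bijection $\SCh(G)\to\Ch_{\G}(\CA)$ is affine, $\xi$ is an extreme point of $\SCh(G)$ if and only if $\zeta$ is an extreme point of $\Ch_{\G}(\CA)$, i.e.\ $\zeta\in\ICh_{\G}(\CA)$, and \refp{SCh0} identifies these with the ergodic measures; this gives the last assertion.

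I do not expect a genuine obstacle here, since the content of the theorem is exactly the assembly of \refp{SCh0}, \refp{SCh} and \refp{PosDef}. The point deserving the most care is checking that the three maps in play --- the Riesz--Markov measure attached to a supercharacter, the Choquet measure of the corresponding $\G$-character of $\CA$, and the super-representation of \refp{PosDef} --- are mutually compatible, and that the pointwise-convergence topologies on $\SCh(G)$ and $\Ch_{\G}(\CA)$ and the weak* topology on $\CM^{+}_{\G}(\AC)$ line up under these identifications; both follow routinely from the density of $\ev(G)$ in $C(\AC)$ already established above.
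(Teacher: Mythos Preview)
Your proposal is correct and follows essentially the same approach as the paper: the paper states the theorem as ``a clear consequence of \refp{SCh0} (and also of Propositions~\ref{SCh} and \ref{PosDef})'', having already recorded in the preceding paragraphs the change of variables $\xi(g)=\zeta(g-1)$ identifying $\SCh(G)$ with $\Ch_{\G}(\CA)$ and the equality $\mu^{\xi}=\mu^{\zeta}$. You have simply spelled out this assembly more explicitly, including the observation that the affine bijection preserves extreme points, which the paper also notes for the last assertion.
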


Consequently, a parametrisation of the ergodic measures in $\CM_{\G}^{+}(\AC)$ yields a parametrisation of the set $\ISCh(G)$ consisting of all indecomposable supercharacters of the algebra group $G = 1 + \CA$. In the case where $G$ is finite, every $\G$-invariant measure on $\AC$ is supported on a unique $\G$-orbit and, conversely, every $\G$-orbit on $\AC$ supports a unique $\G$-invariant measure (hence, the $\G$-orbits on $\AC$ provide a complete description of $\ISCh(G)$. However, if $G = 1+\CA$ is infinite, then an orbit $\G\cdot \tet$ supports a $\G$-invariant measure if and only if $\G \cdot \tet$ is finite; we recall that the \textit{support} $\supp(\mu)$ of a probability measure $\mu$ on $\AC$ is defined to be the set consisting of all $\tet \in \AC$ such that $\mu(U) \in \R^{+}$ for every open neighbourhood $U \sset \AC$ of $\tet$ (equivalently, $\supp(\mu)$ the smallest closed subset $\CC$ of $\AC$ such that $\mu(\AC \setminus \CC) = 0$). Nevertheless, the following is true; henceforth, for every $\tet \in \AC$, we denote by $\CO^{\tet}$ the closure $\ovl{\G\cdot\tet}$ in $\AC$ of the $\G$-orbit $\G \cdot \tet$.

\begin{proposition} \label{EOrbital}
Let $G=1+\CA$ be a countable discrete algebra group (associated with a nil $\k$-algebra $\CA$). Then, for every ergodic measure $\mu \in \CM_{\G}^{+}(\AC)$, there is at least one $\tet \in \AC$ such that $\supp(\mu) = \CO^{\tet}$.
\end{proposition}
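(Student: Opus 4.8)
The plan is to prove the stronger (and standard) fact that $\supp(\mu)$ coincides with the orbit closure $\CO^{\tet} = \ovl{\G\cdot\tet}$ for $\mu$-almost every $\tet \in \AC$; since a set of full $\mu$-measure is nonempty, this yields the proposition. Recall that $\AC$ is compact and metrisable (because $\CA$ is countable), so we may fix a countable basis $\{U_{n}\}_{n \in \N}$ of nonempty open subsets of $\AC$. The first step is to observe that $\supp(\mu)$ is itself a $\G$-invariant closed set: for each $\g \in \G$ the set $\g\cdot\supp(\mu)$ is closed, and since $\g$ is a homeomorphism of $\AC$ preserving $\mu$ we have $\mu\big(\AC\setminus\g\cdot\supp(\mu)\big) = \mu\big(\AC\setminus\supp(\mu)\big) = 0$; minimality of the support then forces $\supp(\mu) \sset \g\cdot\supp(\mu)$, and applying this to $\g\inv$ as well gives $\g\cdot\supp(\mu) = \supp(\mu)$. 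In particular $\ovl{\G\cdot\tet}\sset\supp(\mu)$ for every $\tet \in \supp(\mu)$, so it suffices to produce (in fact $\mu$-almost every) $\tet \in \supp(\mu)$ for which the reverse inclusion holds.

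For the reverse inclusion I would use ergodicity. For each index $n$ with $\mu(U_{n}) > 0$ --- equivalently, with $U_{n}\cap\supp(\mu)\neq\emptyset$ --- set
$$A_{n} = \bigcup_{\g\in\G} \g\inv\cdot U_{n} = \set{\tet\in\AC}{\G\cdot\tet\cap U_{n}\neq\emptyset}.$$
Then $A_{n}$ is an open (hence Borel) and $\G$-invariant subset of $\AC$ containing $U_{n}$ (take $\g$ to be the identity of $\G$, which acts trivially), so $\mu(A_{n}) \geq \mu(U_{n}) > 0$; ergodicity of $\mu$ therefore forces $\mu(A_{n}) = 1$. As there are only countably many such $n$, the set $A = \bigcap_{n\colon\mu(U_{n})>0} A_{n}$ still satisfies $\mu(A) = 1$, and since $\mu(\supp(\mu)) = 1$ too, the set $A\cap\supp(\mu)$ has full $\mu$-measure.

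Finally, I would fix any $\tet\in A\cap\supp(\mu)$ and check that $\supp(\mu)\sset\ovl{\G\cdot\tet}$. Let $\eta\in\supp(\mu)$ and let $V$ be an open neighbourhood of $\eta$; pick a basic set $U_{n}$ with $\eta\in U_{n}\sset V$. Then $\mu(U_{n}) > 0$ (because $\eta\in U_{n}\cap\supp(\mu)$), hence $\tet\in A_{n}$, i.e. $\G\cdot\tet$ meets $U_{n}$, and therefore meets $V$. Thus $\eta\in\ovl{\G\cdot\tet}$. Together with the first step this gives $\supp(\mu) = \ovl{\G\cdot\tet} = \CO^{\tet}$. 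I do not expect a serious obstacle: the computation is elementary, and the only points needing a moment's care are the verification that $A_{n}$ is genuinely $\G$-invariant (so that ergodicity applies) and the standard equivalence, for an open set, between having positive measure and meeting the support.
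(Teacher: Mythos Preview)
Your proof is correct and follows essentially the same approach as the paper: both arguments use a countable basis, saturate each basic open set of positive measure by $\G$, apply ergodicity to get full measure for each saturation, intersect, and conclude that any point in the resulting full-measure set has dense orbit in $\supp(\mu)$. The only cosmetic difference is that the paper first restricts to $\supp(\mu)$ with the subspace topology (so that every basic open set automatically has positive measure), whereas you work in all of $\AC$ and filter by $\mu(U_{n})>0$; your version is slightly more explicit about the $\G$-invariance of $\supp(\mu)$ and the reverse inclusion $\ovl{\G\cdot\tet}\sset\supp(\mu)$, and you additionally observe (correctly) that the conclusion holds for $\mu$-almost every $\tet$, which the paper's argument also yields but does not state.
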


\begin{proof}
Let $\mu \in \CM_{\G}^{+}(\AC)$ be ergodic, and consider $\supp(\mu)$ equipped with the subspace topology; hence, in particular, $\supp(\mu)$ is second countable. Since $\supp(\mu)$ is clearly $\G$-invariant, $\mu$ can be though naturally as an ergodic $\G$-invariant measure on $\supp(\m)$ having full support. Let  $\set{U_{n}}{n \in \N}$ be a topological basis of $\supp(\mu)$; hence, $\mu(U_n) \in \R^{+}$ for all $n \in \N$. Since $\G \cdot U_{n}$ is a $\G$-invariant set of positive measure and $\mu$ is ergodic, we must have $\mu(\G \cdot U_n) = 1$; moreover, the family $\set{\G \cdot U_{n}}{n \in \N}$ is also a topological basis of $\supp(\mu)$. Let $V = \bigcap_{n \in \N} \G \cdot U_n$, and note that $\mu(V)=1$ (because $V$ is an intersection of sets with measure $1$), and this clearly implies that $V$ is non-empty; furthermore, for every $\tet \in V$ and every $n \in \N$, the intersection $\G \cdot \tet \cap U_{n}$ is non-empty. Since $\set{U_{n}}{n \in \N}$ be a topological basis of $\supp(\mu)$, we conclude that $\G \cdot \tet$ is a dense subset of $\supp(\m)$, which means that $\CO^{\tet} = \supp(\mu)$, as stated.
\end{proof}

In the case where the group $G$ is amenable, the previous result may be slightly improved; we recall that a discrete group $G$ is \textit{amenable} if it admits a \textit{F\o lner sequence}, that is, a family $\set{F_{n}}{n \in \N}$ of finite subsets of $G$ such that, for every $g \in G$, $$\lim_{n\to\infty} \frac{|F_{n} \bigtriangleup (gF_{n})|}{|F_{n}|} = 0$$ where $\bigtriangleup$ denotes the symmetric difference of sets (for the definition and a detailed exposition on amenable groups, we refer to \cite{Pier1984a}; see also \cite[Appendix~G]{Bekka2008a}). It is clear that $G$ is amenable if and only if $\G$ is amenable.

\begin{proposition}\label{AmenableOrbital}
Let $G=1+\CA$ be an amenable countable discrete algebra group (associated with a nil $\k$-algebra $\CA$). Then, for every $\tet \in \AC$, there is at least one ergodic measure $\mu \in \CM_{\G}^{+}(\AC)$ such that $\supp(\mu) = \CO^{\tet}$.
\end{proposition}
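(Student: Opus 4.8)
The plan is to produce, for a given $\tet \in \AC$, an ergodic $\G$-invariant probability measure whose support is exactly $\CO^{\tet} = \ovl{\G\cdot\tet}$. Since $\CO^{\tet}$ is a closed (hence compact) $\G$-invariant subset of $\AC$, it suffices to find a $\G$-invariant probability measure $\mu$ on $\CO^{\tet}$ whose support is all of $\CO^{\tet}$, and then invoke ergodic decomposition: by the Choquet simplex structure of $\CM_{\G}^{+}(\CO^{\tet})$ (the set is non-empty and weak*-compact, its extreme points are the ergodic measures), $\mu$ is a barycentre of ergodic measures, and since $\supp(\mu)$ is the closure of the union of the supports of the measures in its decomposition, at least one ergodic component must have support equal to $\CO^{\tet}$ itself — because $\CO^{\tet}$ is the orbit closure of a single point, any $\G$-invariant closed set meeting $\G\cdot\tet$ contains $\CO^{\tet}$, so a positive-measure set of ergodic components all have full support. (Here I am using amenability only to guarantee $\CM_{\G}^{+}(\CO^{\tet})$ is non-empty and that a fully-supported invariant measure exists; I will say more below.)

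First I would construct a $\G$-invariant probability measure on $\CO^{\tet}$ with full support. Fix a F\o lner sequence $\set{F_{n}}{n\in\N}$ for $\G$ (which exists since $G$, hence $\G$, is amenable), and form the averaged measures $\nu_{n} = \frac{1}{|F_{n}|}\sum_{\g\in F_{n}} \del_{\g\cdot\tet}$ on the compact metrisable space $\AC$. By weak*-compactness of $\CM^{+}(\AC)$, pass to a subsequence converging to some $\nu_{\infty} \in \CM^{+}(\AC)$; the F\o lner property forces $\nu_{\infty}$ to be $\G$-invariant (for each fixed $\g$, $\|\g\cdot\nu_{n} - \nu_{n}\| \le 2|F_{n}\triangle \g F_{n}|/|F_{n}| \to 0$, and invariance passes to the weak*-limit), and each $\nu_{n}$ is supported on $\G\cdot\tet \sset \CO^{\tet}$, so $\supp(\nu_{\infty}) \sset \CO^{\tet}$. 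This $\nu_{\infty}$ need not have full support, but its support is a non-empty closed $\G$-invariant subset of $\CO^{\tet}$; since $\CO^{\tet}$ is the orbit closure of the single point $\tet$, I need a measure whose support actually contains $\tet$. To get full support I would either average over translates, or more directly run the F\o lner averaging argument starting from an arbitrary point of $\CO^{\tet}$ and use the following: because $\AC$ is second countable, pick a countable dense subset $\set{\tet_{k}}{k\in\N}$ of $\CO^{\tet}$, produce for each $k$ a $\G$-invariant $\nu^{(k)} \in \CM^{+}(\CO^{\tet})$ with $\tet_{k} \in \supp(\nu^{(k)})$, and set $\mu = \sum_{k} 2^{-k}\nu^{(k)}$, which is $\G$-invariant and has $\supp(\mu) = \ovl{\bigcup_{k}\supp(\nu^{(k)})} = \CO^{\tet}$.

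Next I would apply ergodic decomposition to $\mu$: since $\CM_{\G}^{+}(\CO^{\tet})$ is a Choquet simplex with extreme points exactly the ergodic measures, there is a probability measure $P$ on the (Borel) set $\Erg$ of ergodic measures with $\mu = \int \rho\, dP(\rho)$, equivalently $\mu(B) = \int \rho(B)\, dP(\rho)$ for every Borel $B \sset \CO^{\tet}$. It remains to check that $P$-almost every $\rho$ has $\supp(\rho) = \CO^{\tet}$, or at least that at least one does. Let $\set{W_{n}}{n\in\N}$ be a countable base of non-empty open sets for $\CO^{\tet}$; then $\mu(W_{n}) \in \R^{+}$ for all $n$, so $\int \rho(W_{n})\,dP(\rho) > 0$, hence $P(\set{\rho}{\rho(W_{n})>0})>0$; intersecting over $n\in\N$ (a countable intersection of full-relative-measure-after-conditioning sets does not immediately give a common positive-measure set, so instead I argue:) for a fixed ergodic $\rho$, the set $\set{n}{\rho(W_{n})=0}$ determines $\supp(\rho)$, and I want some $\rho$ with this set empty. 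Here I would instead invoke the argument of \refp{EOrbital}: for $P$-a.e.\ ergodic $\rho$ there is a point $\tet_{\rho}$ with $\supp(\rho) = \CO^{\tet_{\rho}}$; if no $\rho$ in the decomposition had full support $\CO^{\tet}$, then $\supp(\mu) = \ovl{\bigcup_{\rho}\supp(\rho)}$ would be a countable-type union of proper orbit closures, yet it equals $\CO^{\tet}$, and I would derive a contradiction from the fact that $\tet$ (or a dense set of points) lies in $\supp(\mu)$ but, being in $\CO^{\tet}$, generates $\CO^{\tet}$ as its own orbit closure — so any ergodic component whose support contains $\tet$ must have support all of $\CO^{\tet}$, and such a component exists because $\mu(\set{\tet}) $ interpreted appropriately via small neighbourhoods is "seen" by $P$-positively many components.

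The main obstacle is precisely this last measurability/selection point: ensuring that the ergodic decomposition of a full-support invariant measure on an orbit closure contains at least one ergodic measure that itself has full support, rather than all of its ergodic components being supported on proper $\G$-invariant closed subsets. I expect to resolve it cleanly by combining \refp{EOrbital} (each ergodic component is the orbit closure of one of its points) with the topological fact that $\CO^{\tet}$ has no proper $\G$-invariant closed subset containing $\tet$, together with a Baire-category or countable-basis argument showing that the components covering a neighbourhood base of $\tet$ cannot all be proper; the amenability hypothesis enters only to guarantee the existence of the invariant measure $\mu$ we start from.
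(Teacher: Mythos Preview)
Your route is quite different from the paper's. The paper does not go through ergodic decomposition at all: it takes a single weak*-limit $\mu$ of the F\o lner averages $|F_{n}|^{-1}\sum_{\g\in F_{n}}\del_{\g\cdot\tet}$ along a suitable subsequence, checks that $\mu$ is $\G$-invariant with $\supp(\mu)\subseteq\CO^{\tet}$, and then argues \emph{directly} that this particular $\mu$ is ergodic, on the grounds that a $\G$-invariant Borel set $B$ with $\mu(B)>0$ must meet the orbit $\G\cdot\tet$ and hence, by invariance, contain it---forcing $\mu(B)=1$. Your detour through a full-support invariant measure followed by Choquet decomposition is avoided entirely.

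Your proposal has a genuine gap, and not only at the point you flag. In step~1 you need, for each $\tet_{k}$ in a dense subset of $\CO^{\tet}$, an invariant $\nu^{(k)}$ with $\tet_{k}\in\supp(\nu^{(k)})$; but F\o lner averaging at $\tet_{k}$ only yields $\supp(\nu^{(k)})\subseteq\ovl{\G\cdot\tet_{k}}$, and nothing forces $\tet_{k}$ itself into that support. More seriously, the obstacle in step~3 cannot be resolved by the abstract Baire-category or selection argument you sketch: there are transitive actions of countable amenable groups on compact metric spaces admitting \emph{no} ergodic invariant measure of full support---for instance $\Z$ acting by the shift on the orbit closure of $\ldots000111\ldots$ in $\{0,1\}^{\Z}$, where every invariant probability measure is concentrated on the two fixed points $\{0^{\infty},1^{\infty}\}$. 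In such a system step~1 already fails (no invariant measure on the orbit closure has full support), so step~3 cannot even begin. A purely dynamical ergodic-decomposition argument therefore cannot close the gap; you would have to either argue ergodicity of the F\o lner limit directly, as the paper does, or exploit something particular to the $\G$-action on $\AC$---neither of which your outline attempts.
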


\begin{proof}
Let $\tet \in \AC$, and let $\set{F_{n}}{n \in \N}$ be a F\o lner sequence for $\G$; notice that $G$ is amenable if and only if $\G$ is amenable. Since $$\bigg|\frac{1}{|F_n|}\sum_{\g \in F_n}\ev_g(\g \cdot \tet) \bigg|\leq \frac{1}{|F_n|}\sum_{\g \in F_n} |\ev_g(\g \cdot \tet)| = 1,\qquad g \in G,$$ and since $\{\ev_g \colon g \in G \}$ is a countable dense subset of $C(\AC)$, \cite[Theorem~I.24]{Reed1980a} (applied twice) implies that there is a subsequence $\set{F_{n_{k}}}{k \in \N}$ of $\set{F_{n}}{n \in \N}$ such that the limit $$\lim_{k \to \infty}\frac{1}{|F_{n_{k}}|} \sum_{\g \in F_{n_{k}}} f(\g \cdot \tet)$$ exists for all $f \in C(\AC)$. Therefore, we may define a measure $\mu$ on $\AC$ by the rule $$\int_{\AC} f \; d\mu= \lim_{k \to \infty} \frac{1}{|F_{n_{k}}|}\sum_{\g \in F_{n_{k}}} f(\g \cdot \tet), \qquad f \in C(\AC);$$ since $\set{F_{n_{k}}}{k \in \N}$ is a F\o lner sequence for $\G$, it is straightforward to check that $\mu$ is a $\G$-invariant probability measure (that is, $\mu \in \CM_{\G}^{+}(\AC)$). Finally, for every Borel subset $B$ of $\AC$ with $\mu(B) \neq 0$, we have $$\mu(B) = \lim_{k \to \infty} \frac{1}{|F_{n_{k}}|} \big|\set{\g \in F_{n_{k}}}{\g \cdot \tet \in B } \big|,$$ and thus $\supp(\mu)$ is contained in the closure $\CO^{\tet}$ of $\G \cdot \tet$. Furthermore, if $B$ is a $\G$-invariant Borel subset of $\AC$ with $\mu(B) > 0$, then $B \cap \G \cdot \tet$ is non empty, and thus $\G \cdot \tet \subseteq B$, which implies that $\set{\g \in F_{n_{k}}}{\g \cdot \tet \in B} = F_{n_{k}}$ for all $k \in \N$, and hence $\mu(B)=1$. It follows that the measure $\mu \in \CM_{\G}^{+}(\AC)$ is ergodic, and this concludes the proof.
\end{proof}

We remark that, in the case where $G = 1 + \CA$ is a finite algebra group, the previous results assure that every indecomposable supercharacter of $G$ correspond uniquely to the closure of some $\G$-orbit on $\AC$ (see \refeq{FSCh}); however, it is not clearly deduced that a similar result holds in the more general case of an infinite amenable discrete countable algebra group (in fact, the closure of a given $\G$-orbit may support different ergodic $\G$-invariant measures, and hence the the indecomposable supercharacters of $G$ are not necessarily parametrised by the closures of the $\G$-orbits on $\AC$).

\section{The regular representation} \label{RegSCh}

As before, we let $G=1+\CA$ be a discrete countable algebra group associated with a nil algebra $\CA$ over a (countable discrete) field $\k$, and consider the usual \textit{left regular representation} $(\pi,L^{2}(G,\kappa))$ of $G$ on the Hilbert space $L^{2}(G,\kappa)$ where $\kappa$ is the counting measure on $G$ (which can be taken as a Haar measure on $G$); recall that, for every $g \in G$ and every $f \in L^2(G,\kappa)$, the map $\pi(g)f \in L^2(G,\kappa)$ is defined by $\pi(g)f(x)=f(g^{-1}x)$ for all $x \in G$. For every $g \in G$, let $\del_g \in L^2(G,\kappa)$ be the Dirac function supported on $\{g\}$, and note that the $\C$-linear span of the set $\set{\del_{g}}{g \in G}$ is dense in $L^2(G,\kappa)$. Since $\pi(g)\del_{1} = \del_{g}$ for all $g \in G$, we see that the function $\del_{1}$ is a cyclic vector for the representation $(\pi,L^2(G,\kappa))$, and thus the regular representation affords a character of $G$ to which we refer as the \textit{regular character} of $G$ and denote by $\lam_{G}$; hence, $\lam_{G}(g) = \langle \pi(g)\del_{1} \mid \del_{1} \rangle = \del_{g,1}$ for all $g \in G$. It is clear that $\lam_{G}$ is a supercharacter of $G$, and so in virtue of \reft{ErgodicCorrespondence} it is uniquely determined by a unique $\G$-invariant measure on $\AC$. In what follows, we characterise $\lam_{G}$ as a supercharacter of $G$ by understanding the measure on $\AC$ which is associated with $\lam_{G}$; in particular, we provide a criterion for the regular character to be an indecomposable supercharacter (or equivalently for the corresponding $\G$-invariant measure to be ergodic), and determine its integral decomposition over $\ISCh(G)$.

For the moment, we consider a slightly more general situation where $\CA = \GC$ is the Pontryagin dual of an arbitrary compact abelian group $\CG$; hence, $\CA$ is a discrete abelian group, and it is countable if and only if $\CG$ is second countable (or, equivalently, metrisable). Since $\CG$ is a compact group, there is a unique probability Haar measure on $\CG$ which we denote by $\eta$ (we recall that $\CG$ is unimodular, and thus the left and right Haar measures coincide); on the other hand, we denote by $\kappa$ the counting measure on $\GC$ (which is a Haar measure on $\GC$; notice that, since $\GC$ abelian, it is trivially unimodular). For every $\vphi \in L^{1}(\CG,\eta)$, we define the \textit{Fourier transform} of $\vphi$ to be the continuous map $\map{\widehat{\vphi}}{\GC}{\C}$ given by $$\widehat{\vphi}(\tet) = \int_{\CG} \vphi(g)\overline{\tet(g)}\; d\eta,\qquad \tet \in \GC;$$ for details on the Fourier transform on arbitrary locally compact abelian groups, we refer to \cite[Chapter~4]{Folland1995a}. In the case where $\widehat{\vphi} \in L^{1}(\GC,\kappa)$ for $\vphi \in L^{1}(\CG,\eta)$, the \textit{Fourier inversion formula} holds (see \cite[Theorem~4.32]{Folland1995a}; see also \cite[Proposition~4.24]{Folland1995a}): for $\eta$-almost every $g \in \CG$,  we have $$\vphi(g) = \int_{\GC} \widehat{\vphi}(\tet) \tet(g)\;d\kappa;$$ indeed, this formula holds for all $g \in \CG$ whenever $\vphi$ is continuous. Finally, we mention \textit{Plancherel theorem} (see \cite[Theorem~4.32]{Folland1995a}) which asserts that, when restricted to $L^{1}(\CG,\eta) \cap L^{2}(\CG,\eta)$, the Fourier transform extends uniquely to a uni\-ta\-ry isomorphism $\map{\CF}{L^{2}(\CG,\eta)}{L^{2}(\GC,\kappa)}$ (so that the Fourier transform is defined in the space $L^{1}(\CG,\eta)+L^{2}(\CG,\eta)$).

Now, let $\G$ be a group which acts on the left of $\GC$ via continuous automorphisms; as before, we write $\g \cdot a = \g(a)$ for all $\g \in \G$ and all $a \in \GC$. Then, $\G$ acts continuously on the left of $\CG$ via the contragradient action, and thus every $\g \in \G$ defines a measure $\g \cdot \eta$ on $\CG$ by the rule $(\g \cdot \eta)(B) = \eta(\g\inv\cdot B)$ for all Borel subset $B$ of $\CG$; notice that $(\g \cdot \eta)(\CG) = \eta(\CG) = 1$, and hence $\g \cdot \eta$ is a probability measure. It is well-known that, since every element $\g \in \G$ is a continuous automorphism of $\CG$ with continuous inverse, the composition $\eta \circ \g$ is also a (left) Haar measure on $\CG$, and thus the Haar measure $\eta$ on $\CG$ is $\G$-invariant. As it turns out, the ergodicity of the Haar measure $\eta$ with respect to the $\G$-action on $\CG$ depends on the existence of finite $\G$-orbits on $\GC$. Indeed, the following result is an obvious generalisation of a certainly well-known theorem on classical ergodic theory on compact groups (see, for example, \cite[Theorem~1.10]{Walters1982a}); a proof is included for convenience of the reader.

\begin{theorem}\label{FiniteSCl}
Let $\CG$ be a compact abelian group equipped with the normalised Haar measure $\eta$, and let $\G$ be a group of continuous automorphisms of $\CG$. Then, $\eta$ is ergodic (with respect to the $\G$-action on $\CG$) if and only if every nontrivial character $\tet \in \GC$ has an infinite $\G$-orbit.
\end{theorem}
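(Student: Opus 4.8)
The plan is to transfer the problem to Fourier analysis on $L^{2}(\CG,\eta)$ and apply the $L^{2}$-characterisation of ergodicity recalled earlier (a $\G$-invariant probability measure is ergodic exactly when its only $\G$-invariant $L^{2}$-functions are constant almost everywhere; cf.\ \cite[Theorem~1.6]{Walters1982a}). Every $\g \in \G$, being a continuous automorphism of $\CG$, preserves the Haar measure $\eta$, so it induces a unitary operator $U_{\g}$ on $L^{2}(\CG,\eta)$ via $(U_{\g}f)(g) = f(\g\inv(g))$, and $\g \mapsto U_{\g}$ is a unitary representation of $\G$; thus $\eta$ is ergodic if and only if every $f \in L^{2}(\CG,\eta)$ with $U_{\g}f = f$ for all $\g \in \G$ is $\eta$-almost everywhere constant.

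First I would produce an adapted orthonormal basis of $L^{2}(\CG,\eta)$. Since $\CG$ is compact, each character $\tet \in \GC$ is a bounded continuous function on $\CG$, hence an element of $L^{1}(\CG,\eta) \cap L^{2}(\CG,\eta)$, and by orthogonality of characters its Fourier transform is $\widehat{\tet}(\tet') = \int_{\CG}\tet(g)\,\ovl{\tet'(g)}\,d\eta = \del_{\tet,\tet'}$; so, by the Plancherel theorem recalled above, $\CF$ carries $\GC$ onto the canonical orthonormal basis $\set{\del_{\tet}}{\tet \in \GC}$ of $L^{2}(\GC,\kappa)$, and therefore $\GC$ is itself an orthonormal basis of $L^{2}(\CG,\eta)$. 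A short computation then identifies $U$ on this basis: for $\tet \in \GC$ the function $g \mapsto \tet(\g\inv(g))$ is again a character of $\CG$, namely the image $\g\cdot\tet$ of $\tet$ under the action of $\G$ on $\GC$ dual to its action on $\CG$ (whose orbits are precisely the $\G$-orbits in the statement); hence $U_{\g}\tet = \g\cdot\tet$, so $U$ simply permutes the basis $\GC$ along $\G$-orbits. Consequently, writing $f = \sum_{\tet \in \GC}c_{\tet}\tet$ with $\sum_{\tet}|c_{\tet}|^{2} = \|f\|^{2} < \infty$ (Parseval), the vector $f$ is $\G$-invariant if and only if the coefficient map $\tet \mapsto c_{\tet}$ is constant on every $\G$-orbit on $\GC$.

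Both implications are then immediate. For ``ergodic $\Rightarrow$ infinite orbits'' I argue by contraposition: if some nontrivial $\tet_{0} \in \GC$ has a finite $\G$-orbit $\{\tet_{1},\dots,\tet_{m}\}$ --- which cannot contain the trivial character $\0$, since automorphisms fix $\0$ --- then $f = \tet_{1}+\cdots+\tet_{m}$ is a finite sum of bounded functions, hence lies in $L^{2}(\CG,\eta)$, is $\G$-invariant because $U_{\g}$ permutes $\{\tet_{1},\dots,\tet_{m}\}$, and is not $\eta$-a.e.\ constant (its Fourier coefficient at $\0$ is $0$ but the one at $\tet_{1}$ is $1$, while a constant function has Fourier mass only at $\0$); equivalently, for suitable $c \in \C$ and $\eps > 0$ the open set $\set{g \in \CG}{|f(g)-c|<\eps}$ is a $\G$-invariant Borel set of measure strictly between $0$ and $1$. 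Hence $\eta$ is not ergodic. Conversely, assume every nontrivial character has an infinite $\G$-orbit, and let $f = \sum_{\tet}c_{\tet}\tet$ be $\G$-invariant, so $c$ is constant on each $\G$-orbit; if $c_{\tet_{0}} \neq 0$ for some $\tet_{0} \neq \0$, then $c_{\tet} = c_{\tet_{0}} \neq 0$ for every $\tet$ in the infinite orbit of $\tet_{0}$, contradicting $\sum_{\tet}|c_{\tet}|^{2} < \infty$; thus $c_{\tet} = 0$ for all $\tet \neq \0$, so $f = c_{\0}$ is constant and $\eta$ is ergodic.

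The step I expect to demand the most care is the identification $U_{\g}\tet = \g\cdot\tet$: one must keep the Pontryagin identification $\CG = (\GC)^{\circ}$ and the left/right conventions for the contragredient action consistent, so that the orbits of the permutation action of $U$ on the basis $\GC$ genuinely match the $\G$-orbits named in the theorem. Everything else reduces to Plancherel and Parseval together with the passage --- quoted earlier --- between ``$\G$-invariant Borel sets have measure $0$ or $1$'' and ``$\G$-invariant $L^{2}$-functions are constant''.
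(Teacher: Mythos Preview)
Your proposal is correct and follows essentially the same approach as the paper: both directions use the $L^{2}$-characterisation of ergodicity together with the Fourier expansion of a $\G$-invariant function, building a nonconstant invariant function from a finite orbit for one implication and using square-summability of Fourier coefficients to force vanishing along infinite orbits for the other. The only difference is cosmetic---you phrase things via the unitary permutation action on the orthonormal basis $\GC$, while the paper works with the Fourier transform $\CF$ into $L^{2}(\GC,\kappa)$---but these are the same argument.
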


\begin{proof}
Firstly, we assume that $\eta$ is ergodic, and let $\tet \in \GC$ be such that the $\G$-orbit $\G \cdot \tet \sset \GC$ is finite. Then, since the character $\vphi_{\G\cdot\tet} = \sum_{\tau \in \G\cdot\tet} \tau$ of $\CG$ is clearly $\G$-invariant, the ergodicity of $\eta$ implies that there is a constant $c$ such that $\vphi_{\G\cdot\tet}(g) = c$ for $\eta$-almost all $g \in \CG$, and thus $$c = \int_{\CG} \vphi_{\G\cdot\tet}(g)\; d\eta = \langle \vphi_{\G\cdot\tet}, \1_{\CG} \rangle_{\CG} = |\G\cdot\tet|\, \langle \tet, \1_{\CG} \rangle_{\CG}$$ where $\langle \cdot, \cdot \rangle_{\CG}$ denotes the usual  inner product on $L^{2}(\CG,\eta)$, and where $\1_{\CG} \in \GC$ is the trivial character of $\CG$. By the orthonormality of the indecomposable characters of $\CG$, we conclude that $c = 0$ unless $\tet = \1_{\CG}$, in which case $\G\cdot \tet = \{\1_{\CG}\}$; on the other hand, if $c = 0$, then we obtain $$0 = \int_{\CG} \vphi_{\G\cdot\tet}(g) \ovl{\tet(g)}\; d\eta = \langle \vphi_{\G\cdot\tet}, \tet \rangle_{\CG} = 1,$$ a contradiction. Consequently, $\{\1_{\CG}\}$ is the unique finite $\G$-orbit on $\GC$, as required.

Conversely, suppose that $\{\1_{\CG}\}$ is the unique finite $\G$-orbit on $\GC$. In order to prove that $\eta$ is an ergodic $\G$-invariant measure, we show that every $\G$-invariant function in $L^{2}(\CG,\eta)$ is constant $\eta$-almost everywhere. Let $\vphi \in L^{2}(\CG,\eta)$ be $\G$-invariant, and consider the Fourier transform $\widehat{\vphi} = \CF(\vphi) \in L^{2}(\GC,\kappa)$. Since the Haar measure $\eta$ is $\G$-invariant, it is straightforward to check that $\widehat{\vphi}(\g\cdot\tet) = \widehat{\vphi}(\tet)$ for all $\g \in \G$ and all $\tet \in \GC$ (which means that $\widehat{\vphi}$  is $\G$-invariant), and thus $$\int_{\G\cdot\tet}  |\widehat{\vphi}(\tet')|^{2}\;d\kappa = \kappa(\G\cdot\tet)\,|\widehat{\vphi}(\tet)|^{2}.$$ Since $\widehat{\vphi} \in L^{2}(\GC,\kappa)$, we conclude that $\widehat{\vphi}(\tet) = 0$ whenener $\tet \in \GC$ has an infinite $\G$-orbit (recall that $\GC$ is discrete, and that $\kappa$ is the counting measure on $\GC$). It follows that $\widehat{\vphi} = c \1_{\CG}$ where $c = \widehat{\vphi}(\1_{\CG})$, and so the Fourier inversion formula implies that $\vphi$ is $\eta$-almost everywhere constantly equal to $c$, as required.
\end{proof}

We now return to the case where $G=1+\CA$ is an arbitrary countable discrete algebra group (associated with a nil $\k$-algebra $\CA$) and $\G=G \times G$. Since the normalised Haar measure $\eta$ on $\AC$ is $\G$-invariant, it uniquely defines a supercharacter $\xi^{\eta} \in \SCh(G)$ by the rule $$\xi^{\eta}(g) = \int_{\AC}\tet(g-1)\; d\eta = \del_{g,1},\qquad g \in G;$$ hence, $\xi^{\eta}$ is the regular character $\lam_{G}$ of $G$. Let $\CH^{\eta} = L^2(\AC,\eta)$, and consider the super-representation $(\CT^{\eta},\CH^{\eta})$ of $G$. Since $(\CT^{\eta},\CH^{\eta})$ affords the regular character of $G$, it follows that $(\CT^{\eta},\CH^{\eta})$ is quasi-equivalent to the regular representation $(\pi,L^{2}(G,\kappa))$; in fact, these representations are equivalent.

\begin{proposition} \label{regular}
For every countable discrete algebra group $G=1+\CA$ (associated with a nil $\k$-algebra $\CA$), the linear operator $\map{\CL}{\CH^{\eta}}{L^{2}(G,\kappa)}$ defined by $$\CL(\bet)(g) = \int_{\AC} \bet(\tet) \overline{\tet(g-1)}\; d \eta, \qquad \bet \in \CH^{\eta},\ g \in G,$$ defines an invertible intertwining operator between the representations $(\CT^{\eta},\CH^{\eta})$ and $(\pi,L^{2}(G,\kappa))$ of $G$ whose inverse $\map{\CL\inv}{L^{2}(G,\kappa)}{\CH^{\eta}}$ is defined on the functions with finite support $\varsigma \in C_{c}(G)$ by the rule $$\CL^{-1}(\varsigma)(\tet) = \sum_{g \in G} \varsigma(g) \tet(g-1), \qquad \tet \in \AC.$$ \end{proposition}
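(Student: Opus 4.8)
The plan is to recognize $\CL$ as (a variant of) the Fourier transform on the compact abelian group $\AC$, whose dual is the discrete group $\CA^{+}$, and to exploit the Plancherel theorem recalled just above the statement of \refl{FiniteSCl}. More precisely, let $\CF \colon L^{2}(\AC,\eta) \to L^{2}((\AC)^{\circ},\kappa)$ be the Plancherel unitary. By Pontryagin duality $(\AC)^{\circ}$ is canonically identified with $\CA^{+}$; under this identification the character of $\AC$ corresponding to $a \in \CA$ is $\tet \mapsto \tet(a)$. Hence $\CF(\bet)$, regarded as a function on $\CA$, is precisely $a \mapsto \int_{\AC} \bet(\tet)\,\ovl{\tet(a)}\; d\eta$. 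Since the counting measure $\kappa$ on $G$ can be transported to the counting measure on $\CA$ via the bijection $g \mapsto g-1$, the operator $\CL$ is nothing but $\CF$ followed by this bijection, and is therefore a unitary isomorphism of Hilbert spaces. In particular $\CL$ is invertible; the inverse-Fourier formula (valid, as recalled, for all $\varsigma \in C_{c}(G)$ since these are the analogue of continuous compactly supported functions on the discrete group) gives exactly $\CL^{-1}(\varsigma)(\tet) = \sum_{g \in G} \varsigma(g)\,\tet(g-1)$.

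First I would verify the intertwining identity $\CL \circ \CT^{\eta}(g) = \pi(g) \circ \CL$ for each $g \in G$. Fix $\bet \in \CH^{\eta}$ and $g, x \in G$, write $g = 1+a$, $x = 1+b$ with $a,b \in \CA$. By definition $(\CT^{\eta}(g)\bet)(\tet) = \ev_{g}(\tet)\,\bet(g^{-1}\tet) = \tet(a)\,\bet(g^{-1}\tet)$, so
\begin{align*}
\CL(\CT^{\eta}(g)\bet)(x) &= \int_{\AC} \tet(a)\,\bet(g^{-1}\tet)\,\ovl{\tet(x-1)}\; d\eta \\
&= \int_{\AC} (g\tet)(a)\,\bet(\tet)\,\ovl{(g\tet)(x-1)}\; d\eta,
\end{align*}
using $\G$-invariance of $\eta$ to substitute $\tet \mapsto g\tet$ (here $g\tet = (g,1)\cdot\tet$, so $(g\tet)(c) = \tet(g^{-1}c)$ for $c\in\CA$). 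Now $(g\tet)(a) = \tet(g^{-1}a)$ and $(g\tet)(x-1) = \tet(g^{-1}(x-1)) = \tet(g^{-1}(x-1))$, and one checks at the level of $\CA$ that $g^{-1}a - g^{-1}(x-1) = g^{-1}(a - (x-1)) = g^{-1}((g-1)-(x-1)) = g^{-1}(g-x) = 1 - g^{-1}x = -(g^{-1}x - 1)$, so the integrand becomes $\bet(\tet)\,\ovl{\tet(g^{-1}x - 1)}$ and the whole expression equals $\CL(\bet)(g^{-1}x) = (\pi(g)\CL(\bet))(x)$, as desired. (This is the same manipulation already used in the proof of \refp{PosDef}, transported through $\CL$.)

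Finally I would confirm that the displayed $\CL^{-1}$ is a genuine two-sided inverse: this is immediate from the Fourier inversion formula applied on $\AC$ — for $\varsigma \in C_{c}(G)$ the function $\tet \mapsto \sum_{g}\varsigma(g)\tet(g-1)$ lies in $L^{2}(\AC,\eta)$ (it is a finite $\C$-linear combination of the $\ev_{g}$), and $\CL$ of it returns $\varsigma$ by orthonormality of $\set{\ev_{g}}{g\in G}$ in $L^{2}(\AC,\eta)$; since $C_{c}(G)$ is dense in $L^{2}(G,\kappa)$ and both maps are bounded, this determines $\CL^{-1}$ on all of $L^{2}(G,\kappa)$. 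The only genuine subtlety — and the step I expect to require the most care — is the bookkeeping in the substitution $\tet\mapsto g\tet$ inside the intertwining computation, namely keeping straight which of the two $G$-factors of $\G$ acts and checking the additive identity $g^{-1}a - g^{-1}(x-1) = -(g^{-1}x-1)$ in $\CA$; everything else is a direct invocation of Plancherel and Fourier inversion on the compact abelian group $\AC$.
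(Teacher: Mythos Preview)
Your proposal is correct and follows essentially the same approach as the paper: recognise $\CL$ as the Plancherel unitary $\CF$ on the compact group $\AC$ composed with the shift bijection $g \mapsto g-1$ (the paper writes this as $\CL = \CF^{-1}\circ\nu^{\ast}$, with $\nu^{\ast}$ the induced unitary $L^{2}(G,\kappa)\to L^{2}(\CA,\kappa_{0})$), and then verify the intertwining identity by the same substitution $\tet \mapsto g\tet$ and additive bookkeeping you carry out. Your treatment of the inverse via Fourier inversion and density of $C_{c}(G)$ is slightly more explicit than the paper's, but the argument is the same.
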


\begin{proof}
Firstly, we note that the map $\map{\nu}{G}{\CA}$ (defined by $\nu(g) = g-1$ for all $g \in G$) induces a unitary isomorphism of Hilbert spaces $\map{\nu^{\ast}}{L^{2}(G,\kappa)}{L^{2}(\CA,\kappa_{0})}$. Furthermore, we have $\CL = \CF\inv \circ \nu^{\ast}$, and hence it only remains to show that $\CL$ is an intertwining operator. To see this, for every $g,h \in G$ and every $\bet \in \CH^\eta$, we evaluate
\begin{align*}
\CL\big(\CT^\eta(g)\bet\big)(h) &= \int_{\AC} \big(\CT^\eta(g) \bet \big)(\tet)\, \overline{\tet(h-1)} \; d\eta = \int_{\AC} \bet(g\inv\tet)\, \ev_{g}(\tet)\, \tet(1-h) \; d\eta \\ &= \int_{\AC} \bet(g\inv\tet)\, \tet(g-1)\, \tet(1-h) \; d\eta = \int_{\AC} \bet(g\inv\tet)\, \tet(g-h) \; d\eta \\ &= \int_{\AC} \bet(\tet)\, (g\tet)(g-h)\; d\eta = \int_{\AC} \bet(\tet)\, \tet(1-g\inv h)\; d\eta \\ &= \int_{\AC} \bet(\tet)\, \overline{\tet(g\inv h-1)}\; d\eta = \big(\pi(g)\CL(\bet)\big)(h)
\end{align*}
where the fourth equality holds by the $G$-invariance of $\eta$. The result follows.
\end{proof}

Consequently, the regular representation of $G = 1+\CA$ is completely determined by the normalised Haar measure $\eta$ on the dual group $\AC$; in particular, the regular character $\lam_{G}$ is an indecomposable supercharacter if and only if $\eta$ is an ergodic $\G$-invariant measure. Since $\supp(\eta) = \AC$, \refp{EOrbital} imply the following immediate corollary to the previous proposition.

\begin{theorem} \label{FSclErgodic}
Let $G$ be a countable discrete algebra group associated with a nil $\k$-algebra $\CA$. Then, the regular character $\lam_{G}$ is an indecomposable supercharacter of $G$ if and only if $\{ 1 \}$ is the unique finite superclass.
\end{theorem}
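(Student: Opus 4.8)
The plan is to read the statement off from the identifications already in place. Since the super-representation attached to the normalised Haar measure $\eta$ on $\AC$ is equivalent to the regular representation (\refp{regular}), we have $\xi^{\eta}=\lam_{G}$, and hence \reft{ErgodicCorrespondence} gives at once that $\lam_{G}$ is an indecomposable supercharacter of $G$ if and only if $\eta$ is ergodic with respect to the $\G$-action on $\AC$. So the whole content of the theorem is to convert the ergodicity of $\eta$ into the stated condition on superclasses, and for that I would appeal to \reft{FiniteSCl}.

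To apply \reft{FiniteSCl} I take the compact abelian group in its statement to be $\CG=\AC$. By the Pontryagin duality theorem the dual of $\AC$ is canonically the discrete abelian group $\CA^{+}$, with pairing $\langle a,\tet\rangle=\tet(a)$; writing $\hat a\in(\AC)^{\circ}$ for the character $\tet\mapsto\tet(a)$, a one-line computation from the defining formulas $(g\tet)(a)=\tet(g\inv a)$ and $(\tet g)(a)=\tet(ag\inv)$ shows that under this identification the $\G$-action on $\AC$ is dual to the natural $\G$-action $(g,h)\cdot a=gah\inv$ on $\CA^{+}$, that is, $(g,h)\cdot\hat a=\widehat{(g,h)\cdot a}$. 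Thus \reft{FiniteSCl} applies and yields: $\eta$ is ergodic if and only if every nonzero $a\in\CA$ has an infinite $\G$-orbit $\G\cdot a=GaG$. (If one wishes, \refp{EOrbital} may be invoked here to add that when $\eta$ is ergodic its support $\AC$ is the closure of a single $\G$-orbit, although this is not needed.)

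It then only remains to match $\G$-orbits in $\CA$ with superclasses of $G$: by definition the superclass of $1+a$ is $1+GaG$, which is equipotent with $GaG$, and $\{1\}$ — the superclass of $1=1+0$ — is always finite. Hence ``$\{1\}$ is the unique finite superclass of $G$'' is literally the statement that $GaG$ is infinite for every $a\in\CA\setminus\{0\}$, which by the previous paragraph is equivalent to the ergodicity of $\eta$, and hence to $\lam_{G}$ being an indecomposable supercharacter.

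I do not anticipate a genuine obstacle; the single point needing care is the double-dualisation bookkeeping — confirming that the $\G$-action fed into \reft{FiniteSCl} is the correct one, and that a ``nontrivial character of $\AC$'' in that theorem corresponds precisely to a nonzero element of $\CA$. Everything else is substitution into the results established above.
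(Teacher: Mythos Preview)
Your proposal is correct and follows essentially the same route as the paper: identify $\lam_{G}=\xi^{\eta}$ via \refp{regular}, use \reft{ErgodicCorrespondence} to reduce to ergodicity of $\eta$, and then invoke \reft{FiniteSCl} with $\CG=\AC$ and Pontryagin duality to translate ergodicity into the finiteness condition on $\G$-orbits in $\CA$, i.e.\ on superclasses. Your remark that \refp{EOrbital} is not actually needed is apt; the paper mentions it only incidentally (for the observation $\supp(\eta)=\AC$), and the logical weight is carried by \reft{FiniteSCl}, exactly as you have it.
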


Therefore, we see that the nature of the regular character, as an indecomposable supercharacter, is not intrinsic to the class of countable discrete algebra groups, but rather to the nature of the $\G$-action on $\AC$ (or, equivalently, on the $\G$-action on $\CA$).

\begin{corollary}
Let $G=1+\CA$ be a countable discrete algebra group associated with a nil $\k$-algebra $\CA$, and suppose that $\CA$ is finite dimensional. Then, the regular character $\lam_{G}$ is not an indecomposable supercharacter of $G$.
\end{corollary}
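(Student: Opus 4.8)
The plan is to invoke \reft{FSclErgodic}, which says that $\lam_{G}$ is an indecomposable supercharacter precisely when $\{1\}$ is the unique finite superclass of $G$; so the assertion amounts to exhibiting a finite superclass of $G$ other than $\{1\}$. Since the superclasses of $G$ are the sets $1 + \G\cdot a = 1 + GaG$ with $a \in \CA$, it suffices to produce a nonzero $a \in \CA$ whose $\G$-orbit $GaG$ is finite. I would begin from the classical fact that a finite-dimensional associative nil algebra is nilpotent: the descending chain $\CA \supseteq \CA^{2} \supseteq \cdots$ stabilises, by finite-dimensionality, at a left ideal $I = \CA^{m}$ with $\CA I = I = I^{2}$; if $I \neq \{0\}$, choose a left ideal $J \sset I$ minimal with $IJ \neq \{0\}$, pick $b \in J$ with $Ib \neq \{0\}$, and note that $Ib$ is again a left ideal contained in $J$ with $I(Ib) = I^{2}b = Ib \neq \{0\}$, so $Ib = J$ by minimality; then $b \in Ib$ gives $b = xb = x^{2}b = \cdots$ for some $x \in I$, and nilpotence of $x$ forces $b = 0$, a contradiction. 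Hence, assuming $\CA \neq \{0\}$, there is a least integer $n \geq 2$ with $\CA^{n} = \{0\}$, and $\CA^{n-1} \neq \{0\}$.

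Next I would fix any $a \in \CA^{n-1}$ with $a \neq 0$; the point is that $a$ is annihilated on both sides by $\CA$, since $xa \in \CA\cdot\CA^{n-1} \sset \CA^{n} = \{0\}$ and $ax \in \CA^{n-1}\cdot\CA \sset \CA^{n} = \{0\}$ for every $x \in \CA$. Now let $g = 1+x$ and $h = 1+y$ be arbitrary elements of $G$ and write $h\inv = 1+y'$ with $y' \in \CA$ (which is legitimate because $G = 1+\CA$ is a group). Then
$$gah\inv = (1+x)\,a\,(1+y') = a + xa + ay' + xay' = a,$$
using $xa = ay' = 0$. Therefore $\G\cdot a = GaG = \{a\}$ is a single point, so $1 + GaG = \{1+a\}$ is a superclass of $G$ which is finite and, since $a \neq 0$, distinct from $\{1\}$. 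By \reft{FSclErgodic} we conclude that $\lam_{G}$ is not an indecomposable supercharacter of $G$.

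I do not anticipate any real obstacle. The computation of the $\G$-orbit is a one-liner, and the only non-computational input is the standard implication ``finite-dimensional associative nil $\Rightarrow$ nilpotent'', which could equally well be quoted rather than reproved; note also that this argument is valid over an arbitrary field and does not require $G$ to be infinite.
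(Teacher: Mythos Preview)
Your proof is correct and follows essentially the same approach as the paper's. The paper's proof simply quotes the fact that a finite-dimensional nil algebra is nilpotent, picks a nonzero $a$ with $ab=ba=0$ for all $b\in\CA$, and declares that $\{1+a\}$ is a finite superclass; you reprove the nilpotence via the standard minimal-ideal argument and carry out the orbit computation $gah\inv=a$ explicitly, but the underlying idea is identical.
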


\begin{proof}
Every finite dimensional nil algebra is a nilpotent algebra, and thus there is a non-zero element $a \in \CA$ satisfying $ab=ba=0$ for all $b \in \CA$. It is obvious that the $\{1+a\}$ is a finite superclass of $G$.
\end{proof}

By the way of example, if $\k$ be an arbitrary countable infinite discrete field, then the regular character of the infinite unitriangular group $U_{n}(\k)$ is not an indecomposable supercharacter; indeed, $U_{n}(\k) = 1 + \fru_{n}(\k)$ and $\fru_{n}(k)$ is finite-dimensional (nilpotent) $\k$-algebra. On the other extreme, every non-trivial superclass of the locally finite unitriangular group $U_{\infty}(\k)$ is infinite, and hence the regular character of $U_{\infty}(\k)$ is an indecomposable supercharacter.

At this point, it is worth to mention that $\lam_{G}$ is an indecomposable character of $G$ if and only if  the unique finite conjugacy class is the trivial conjugacy class $\{ 1 \}$ (see \cite[Lemma~5.3.4]{Murray1943a}); hence, \reft{FSclErgodic} may be though as a supercharacter  analogue of this result.

As shown by Thoma (see \cite{Thoma1964a,Thoma1967a}; see also \cite{Bekka2020b}), the connection between the regular character of an arbitrary countable discrete group $G$ and its finite conjugacy classes is much deeper. Indeed, it is possible to define a Plancherel formula determined by the normal subgroup $G_{\fc}$ consisting of all elements of $G$ with finite conjugacy class. The conjugation action of $G$ on $G_{\fc}$ induces a natural action on the set of characters $\Ch(G_{\fc})$ of $G_{\fc}$; let $\Ch_{G}(G_{\fc})$ denote the convex subset of $\Ch(G_{\fc})$  consisting of all $G$-invariant elements of $\Ch(G_{\fc})$, and let $\ICh_{G}(G_{\fc})$ denote the corresponding set of extreme (or indecomposable) elements. Given an arbitrary $\vphi \in \ICh_{G}(G_{\fc})$ we denote by $\widetilde{\vphi}$ its extension by zero of $\vphi$ to $G$; hence, for every $g \in G$, $$\widetilde{\vphi}(g) = \begin{cases} \vphi(g), & \text{if $g \in G_{\fc}$,} \\ 0, & \text{if $g \in G\setminus G_{\fc}$.} \end{cases}$$ It is straightforward to check that $\widetilde{\vphi} \in \Ch(G)$ for all $\vphi \in \Ch(G_{\fc})$; indeed, as proved in \cite[Proposition~1.F.9]{Bekka2020c} (see also \cite{Blattner1963a}, or \cite[Theorem~6.13]{Folland1995a}), if $(\pi_{\vphi},\CH_{\vphi})$ is the GNS representation of $G_{\fc}$ associated with $\vphi$, then the GNS representation $(\pi_{\widetilde{\vphi}},\CH_{\widetilde{\vphi}})$ of $G$ associated with $\widetilde{\vphi}$ is equivalent to the induced representation $\Ind^{G}_{G_{\fc}}(\pi_{\vphi})$, and thus there is a cyclic vector $\upsilon \in \CH_{\widetilde{\vphi}}$ such that $$\widetilde{\vphi}(g) = \big\langle \Ind^{G}_{G_{\fc}}(\pi_{\vphi})(g) \upsilon, \upsilon \big\rangle,\qquad g \in G.$$ If we set $$\widetilde{\Ch}{}^{+}_{G}(G_{\fc}) = \set{\widetilde{\vphi}}{\vphi \in \ICh_{G}(G_{\fc})},$$ then \cite[Satz~4]{Thoma1967a} asserts that the intersection $\widetilde{\Ch}{}^{+}_{G}(G_{\fc}) \cap \ICh(G)$ is non-empty; moreover, if $\omega$ denotes the Choquet measure on $\ICh(G)$ which is associated with the regular character $\lam_{G}$, then $\supp(\omega) = \widetilde{\Ch}{}^{+}_{G}(G_{\fc})$, and thus we have
\begin{equation} \label{regular1}
\lam_{G}(g) = \int_{\widetilde{\Ch}{}^{+}_{G}(G_{\fc})} \chi(g) \;d\omega,\qquad g \in G.
\end{equation}
Since the regular character $\lam_{G}$ is afforded by the regular representation of $G$ and since $\ICh(G)$ can be understood as a dual space for $G$, it is somewhat customary to refer to $\omega$ as the \textit{Plancherel measure} of $G$; more details about the regular character of nilpotent discrete groups can be found in \cite{Baggett1997a}. (Recent developments on the Plancherel formula for countable groups can be found in \cite{Bekka2020b}.)

As it turns out, there is also a counterpart for this result in terms of finite superclasses of a countable discrete algebra group. Let $G=1+\CA$ be an arbitrary countable discrete algebra group (associated with a nil $\k$-algebra $\CA$), and let $\nu$ be the unique probability measure on $\ISCh(G)$ such that
\begin{equation} \label{regular2}
\lambda_{G}(g)=\int_{\ISCh(G)}\xi(g)\; d\nu,\qquad g \in G;
\end{equation}
in direct analogy with the integral decomposition of $\lambda_{G}$ with respect to indecomposable characters, we refer to the measure $\nu$ as the \textit{super-Plancherel measure}. Our aim is to describe this measure $\nu$ in terms of the supercharacters of the algebra subgroup  $G_{\fsc}$ of $G$ consisting of all elements having a finite superclass; we start by proving that $G_{\fsc}$ is indeed an algebra subgroup of $G$.

\begin{lemma} \label{afsc}
In the above notation, let $\CA_{\fsc}$ be the subset of $\CA$ consisting of all elements with finite $\G$-orbit. Then, $\CA_{\fsc}$ is a (nil) subalgebra of $\CA$ and $G_{\fsc} = 1+\CA_{\fsc}$. In particular, $G_{\fsc}$ is an algebra subgroup of $G$; furthermore, $G_{\fsc}$ is a normal subgroup of $G$.
\end{lemma}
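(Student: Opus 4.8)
The plan is to phrase everything in terms of the $\G = G\x G$ action on $\CA$, using that the superclass of an element $1+a \in G$ is precisely $1+\G\cdot a = 1+GaG$. Thus $1+a \in G_{\fsc}$ if and only if $GaG$ is finite, i.e. if and only if $a \in \CA_{\fsc}$, which will give $G_{\fsc} = 1+\CA_{\fsc}$ as soon as $\CA_{\fsc}$ is known to be a subalgebra (so that $1+\CA_{\fsc}$ is a bona fide algebra group); and since $\CA_{\fsc} \sset \CA$ and $\CA$ is nil, $\CA_{\fsc}$ is automatically nil, so $1+\CA_{\fsc}$ is an algebra subgroup of $G$.

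The only real work is checking that $\CA_{\fsc}$ is closed under the three algebra operations, and the trick is to exhibit the orbit of the result as a subset of a set obtained from $GaG$ and $GbG$ by a \emph{finite} operation. For scalars, $G(\lam a)G = \lam(GaG)$ (a bijective image when $\lam \neq 0$, and $\{0\}$ when $\lam = 0$), hence finite. For sums, the identity $g(a+b)h\inv = gah\inv + gbh\inv$ yields $G(a+b)G \sset \set{u+v}{u \in GaG,\ v \in GbG}$, which is finite. For products, the factorisation $g(ab)h\inv = (ga\cdot 1\inv)(1\cdot b\cdot h\inv)$ exhibits $ga \in GaG$ and $bh\inv \in GbG$, so that $G(ab)G \sset \set{uv}{u \in GaG,\ v \in GbG}$, again finite; all products here are formed inside the unital algebra $\k\cdot 1 + \CA$, so the absence of an identity in $\CA$ causes no trouble.

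For the last assertion I would argue directly: if $x = 1+c \in G$ and $1+a \in G_{\fsc}$, then $x(1+a)x\inv = 1 + xax\inv$ with $xax\inv = (x,x)\cdot a$ lying in the same $\G$-orbit as $a$; hence $G(xax\inv)G = GaG$ is finite, so $xax\inv \in \CA_{\fsc}$ and $x(1+a)x\inv \in 1+\CA_{\fsc} = G_{\fsc}$. (Equivalently, since every superclass is a union of conjugacy classes, conjugation preserves superclasses, hence preserves the property of having a finite superclass.) I do not anticipate a substantive obstacle here; the only point demanding care is selecting the factorisations in the previous paragraph so that each conjugate is visibly trapped in a finite set, rather than inside a product one of whose factors still ranges over all of $G$.
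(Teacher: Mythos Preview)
Your proof is correct and follows essentially the same approach as the paper: both arguments verify closure of $\CA_{\fsc}$ under the algebra operations by trapping each $\G$-orbit inside a finite set built from $\G\cdot a$ and $\G\cdot b$, reaching the same containment $\G\cdot(ab) \sset (\G\cdot a)(\G\cdot b)$ (the paper factors $g(ab)h = (gag\inv)(gbh)$ rather than your $(ga)(bh\inv)$, but this is an inessential variant). Your explicit verifications of $G_{\fsc} = 1+\CA_{\fsc}$ and of normality are slightly more detailed than the paper's ``the result is now clear'', but the content is identical.
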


\begin{proof}
It is obvious that $\CA_{\fsc}$ is closed under scalar multiplication. On the other hand, if $a,b \in \CA_{\fsc}$, then $\G \cdot(a+b) \sset \G\cdot a + \G\cdot b$, and hence
$\G \cdot (a+b)$ is a finite $\G$-orbit, which means that $a+b \in \CA_{\fsc}$. Finally, if $a,b \in \CA_{\fsc}$, then $$\G \cdot (ab) = \set{gag\inv gbh}{g,h \in G} \sset \set{gag\inv}{g \in G}\,(\G\cdot b) \sset (\G\cdot a)\, (\G\cdot b),$$ and so the $\G$-orbit $\G\cdot(ab)$ is finite, which means that $ab \in \CA_{\fsc}$. The result is now clear.
\end{proof}

The group $\G$ acts continuously on the left of $\CA_{\fsc}$, and thus we naturally obtain a continuous left action of $\G$ on the Pontryagin dual $(\CA_{\fsc})^{\circ}$ of the additive group $(\CA_{\fsc})^{+}$. Following the terminology used in \refp{SCh0}, let $\Ch_{\G}(\CA_{\fsc})$ denote the set of all $\G$-invariant characters of $(\CA_{\fsc})^{+}$, and let $\ICh_{\G}(\CA_{\fsc})$ denote the set consisting of all indecomposable $\G$-invariant characters of $(\CA_{\fsc})^{+}$. By \refp{SCh0}, we know that $\Ch_{\G}(\CA_{\fsc})$ is affinelly homeomorphic to the space $\CM^{+}_{\G}((\CA_{\fsc})^{\circ})$ of $\G$-invariant probability measures on $(\CA_{\fsc})^{\circ}$, so that the subset $\ICh_{\G}(\CA_{\fsc})$ of indecomposable $\G$-supercharacters is in one-to-one correspondence with the set of ergodic measures in $\CM^{+}_{\G}((\CA_{\fsc})^{\circ})$.

On the other hand, if we set $\G_{\fsc} = G_{\fsc}\x G_{\fsc}$, then \reft{ErgodicCorrespondence} asserts that the set $\SCh(G_{\fsc})$ of supercharacters of $G_{\fsc}$ is affinelly homeomorphic to the space $\CM^{+}_{\G_{\fsc}}((\CA_{\fsc})^{\circ})$ of $\G_{\fsc}$-invariant probability measures on $(\CA_{\fsc})^{\circ}$. In particular, since we clearly have $\CM^{+}_{\G}((\CA_{\fsc})^{\circ}) \sset \CM^{+}_{\G_{\fsc}}((\CA_{\fsc})^{\circ})$, we conclude that every $\G$-invariant probability measure $\mu \in \CM^{+}_{\G}((\CA_{\fsc})^{\circ})$ corresponds uniquely to the supercharacter $\xi^{\mu} \in \SCh(G_{\fsc})$ defined by the rule $$\xi^{\mu}(g) = \int_{(\CA_{\fsc})^{\circ}} \tet(g-1)\; d\mu,\qquad  g \in G_{\fsc}.$$ Henceforth, we define $$\SCh_{\G}(G_{\fsc}) = \set{\xi^{\mu}}{\mu \in \CM^{+}_{\G}((\CA_{\fsc})^{\circ})},$$ and denote by $\ISCh_{\G}(G_{\fsc})$ the subset of $\SCh_{\G}(G_{\fsc})$ consisting of all supercharacters of $G_{\fsc}$ which correspond to the ergodic $\G$-invariant measures in $\CM^{+}_{\G}((\CA_{\fsc})^{\circ})$; furthermore, for every $\xi \in \SCh_{\G}(G_{\fsc})$, we denote by $\widetilde{\xi}$ the extension by zero of $\xi$ to $G$. Our next goal is to prove the following result.

\begin{theorem} \label{FteSch}
Let $G = 1+\CA$ be a discrete countable algebra group associated with a nil $\k$-algebra $\CA$, and let $ \xi \in \SCh^{+}_{\G}(G_{\fsc})$ be arbitrary. Then, $\widetilde{\xi}$ is an indecomposable supercharacter of $G$.
\end{theorem}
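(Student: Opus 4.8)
The plan is to realise $\widetilde{\xi}$ as $\xi^{\widetilde{\mu}}$ for a concrete $\G$-invariant probability measure $\widetilde{\mu}$ on $\AC$ and then to show that $\widetilde{\mu}$ is ergodic; by \reft{ErgodicCorrespondence} this is exactly what it means for $\widetilde{\xi}$ to be an indecomposable supercharacter of $G$. By \refl{afsc}, $\CA_{\fsc}$ is a $\G$-invariant two-sided ideal of $\CA$, so Pontryagin duality gives a surjective continuous $\G$-equivariant restriction homomorphism $r\colon \AC \to (\CA_{\fsc})^{\circ}$, $\tet \mapsto \tet|_{\CA_{\fsc}}$, whose kernel $K = \CA_{\fsc}^{\perp}$ is a closed $\G$-invariant subgroup of $\AC$; then $K$ carries a $\G$-invariant normalised Haar measure $\eta_{K}$, and $K^{\perp} = \CA_{\fsc}$ by biduality (recall $\CA$ is discrete). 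Writing $\xi = \xi^{\mu}$ with $\mu \in \CM_{\G}^{+}((\CA_{\fsc})^{\circ})$ ergodic, I would define $\widetilde{\mu}$ by spreading $\mu$ over the fibres of $r$ with $\eta_{K}$, i.e. $\int_{\AC} f \, d\widetilde{\mu} = \int_{(\CA_{\fsc})^{\circ}} \big( \int_{K} f(s(\sigma)+k)\, d\eta_{K}(k)\big)\, d\mu(\sigma)$ for $f \in C(\AC)$, with $s$ any Borel section of $r$ (the inner integral is independent of $s$). One checks routinely that $\widetilde{\mu}$ is a probability measure with $r_{*}\widetilde{\mu} = \mu$, that $\widetilde{\mu}$ is $K$-invariant, and --- using $\G$-invariance of $\mu$, $\G$-equivariance of $r$, and $\G$-invariance of $\eta_{K}$ --- that $\widetilde{\mu} \in \CM^{+}_{\G}(\AC)$.

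Next I would verify $\xi^{\widetilde{\mu}} = \widetilde{\xi}$. If $g \in G_{\fsc}$ then $g-1 \in \CA_{\fsc}$, so $\ev_{g}$ factors through $r$ and $\xi^{\widetilde{\mu}}(g) = \int_{(\CA_{\fsc})^{\circ}} \sigma(g-1)\, d\mu = \xi^{\mu}(g) = \xi(g)$. If $g \notin G_{\fsc}$ then $a := g-1 \notin \CA_{\fsc} = K^{\perp}$, so there is $k \in K$ with $k(a)\neq 1$, and $K$-invariance of $\widetilde{\mu}$ gives $\xi^{\widetilde{\mu}}(g) = \int_{\AC}\tet(a)\, d\widetilde{\mu} = k(a) \int_{\AC}\tet(a)\, d\widetilde{\mu}$, whence $\xi^{\widetilde{\mu}}(g) = 0 = \widetilde{\xi}(g)$. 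By \refp{PosDef} this already shows $\widetilde{\xi} = \xi^{\widetilde{\mu}} \in \SCh(G)$, so the theorem reduces to the ergodicity of $\widetilde{\mu}$.

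For that, let $B \sset \AC$ be $\G$-invariant Borel. Disintegrating $\widetilde{\mu} = \int_{(\CA_{\fsc})^{\circ}} \widetilde{\mu}_{\sigma}\, d\mu(\sigma)$ along $r$ (each $\widetilde{\mu}_{\sigma}$ a translate of $\eta_{K}$ on $r^{-1}(\sigma)$) and using $\G$-equivariance of $r$ together with uniqueness of disintegrations, one gets $\g_{*}\widetilde{\mu}_{\sigma} = \widetilde{\mu}_{\g\sigma}$ for $\mu$-almost every $\sigma$ and all $\g \in \G$ (as $\G$ is countable); hence $\sigma \mapsto \widetilde{\mu}_{\sigma}(B)$ is $\G$-invariant, so constant $= c$ $\mu$-almost everywhere by ergodicity of $\mu$, and $\widetilde{\mu}(B) = c$. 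It remains to show $c \in \{0,1\}$, and this is the step I expect to be the main obstacle. The natural attack is to decompose $L^{2}(\AC,\widetilde{\mu}) = \bigoplus_{\chi \in \hat{K}} H_{\chi}$ into $K$-isotypic components, identify $\hat{K}$ with the $\G$-module $\CA/\CA_{\fsc}$ (on which $\G$ acts through $\G/\G_{\fsc}$), and note that $\G$ permutes the $H_{\chi}$ compatibly with its action on $\hat{K}$; then $\I_{B}$, being $\G$-invariant with $K$-Fourier coefficients whose norms are constant along $\G$-orbits, is supported on characters of finite $\G$-orbit. On $H_{\mathbf{0}} = L^{2}((\CA_{\fsc})^{\circ},\mu)$ the $\G$-invariant vectors are constants by ergodicity of $\mu$, so the real work is to kill the contributions of $\chi \neq \mathbf{0}$ with finite $\G$-orbit: after a unitary identification $H_{\chi} \cong L^{2}((\CA_{\fsc})^{\circ},\mu)$ twisted by the cocycle $\g \mapsto [\sigma \mapsto \sigma(\g a - a)]$ ($a$ a lift of $\chi$), this means showing that the twisted $\stab_{\G}(\chi)$-action has no nonzero invariant vector, a fact that must be extracted from the ergodicity of $\mu$ together with the $\G$-module structure of $\CA/\CA_{\fsc}$. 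I would first try to establish a clean structural lemma for this --- ideally, that $\CA/\CA_{\fsc}$ has no nonzero element of finite $\G$-orbit --- which would make the $\chi \neq \mathbf{0}$ terms vanish outright and finish the argument; this is the part I am least confident about and would tackle first.
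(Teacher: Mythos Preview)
Your construction of $\widetilde{\mu}$ and the verification that $\xi^{\widetilde{\mu}} = \widetilde{\xi}$ match the paper's \refp{Extension} almost exactly: there a $\k$-linear complement $\CB'$ of $\CA_{\fsc}$ gives $\AC \cong (\CA_{\fsc})^{\circ} \times (\CA_{\fsc})^{\perp}$, and $\widetilde{\mu}$ is simply the product $\mu \times \eta_{(\CA_{\fsc})^{\perp}}$; your fibrewise spreading via a Borel section is the same object. (One small slip: \refl{afsc} only asserts that $\CA_{\fsc}$ is a $\G$-invariant \emph{subalgebra}, not a two-sided ideal; but only $\G$-invariance of the additive subgroup is used, and that is immediate.) Your reduction to the ergodicity of $\eta_{K}$ with $K = (\CA_{\fsc})^{\perp}$ is likewise the paper's reduction, and by \reft{FiniteSCl} applied to the compact group $K$ with $K^{\circ} \cong \CA/\CA_{\fsc}$, this ergodicity is precisely your proposed structural lemma: every nonzero element of $\CA/\CA_{\fsc}$ has infinite $\G$-orbit.

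The gap is exactly where you locate it, and the paper does \emph{not} close it by a direct orbit argument in $\CA/\CA_{\fsc}$. Instead it imports Thoma's Plancherel decomposition of the regular character $\lambda_{G}$ over $\widetilde{\Ch}{}^{+}_{G}(G_{\fc})$ (\refeq{regular1}). Comparing this with the super-Plancherel decomposition (\refeq{regular2}) shows that for $\nu$-almost every $\xi \in \ISCh(G)$ one has $\xi(1+a) = 0$ whenever $a \notin \CA_{\fc}$; since such $\xi$ is constant on superclasses, it in fact vanishes off $1+\CA_{\fc}(\G)$, where $\CA_{\fc}(\G) = \{a \in \CA_{\fc} : \G\cdot a \subseteq \CA_{\fc}\}$. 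Taking any such $\xi$ and applying \refp{Extension} with $\CB = \CA_{\fc}(\G)$ in the reverse direction --- now starting from the \emph{known} indecomposability of $\xi$ --- forces $\eta_{\CA_{\fc}(\G)^{\perp}}$ to be $\G$-ergodic. A further factorisation through the inclusion $\CA_{\fsc} \subseteq \CA_{\fc}(\G)$ then yields the ergodicity of $\eta_{(\CA_{\fsc})^{\perp}}$. So the missing idea in your plan is not an internal algebraic fact about orbits in $\CA/\CA_{\fsc}$, but the use of Thoma's theorem on $G_{\fc}$ as an external input: it manufactures, for free, an indecomposable supercharacter whose very existence --- fed back through \refp{Extension} --- forces the Haar measure on the relevant annihilator to be ergodic. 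Your disintegration and $K$-Fourier analysis is sound but cannot by itself supply this input.
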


In order to proceed with the proof of this theorem, we consider a slightly more general situation where $\CB$ is an arbitrary $\G$-invariant subalgebra of $\CA$; we consider the Pontryagin dual $\CB^{\circ}$ of the additive group $\CB^{+}$, and the natural contragradient $\G$-action on $\CB^{\circ}$. By \refp{SCh0}, we know that the space $\Ch_{\G}(\CB)$ consisting of all $\G$-characters of $\CB^{+}$ is affinelly homeomorphic to the space $\CM^{+}_{\G}(\CB^{\circ})$ of $\G$-invariant probability measures on $\CB^{\circ}$, and so the subset $\ICh_{\G}(\CB)$ consisting of all indecomposable $\G$-characters of $\CB^{+}$ is in one-to-one correspondence with the set of ergodic measures in $\CM^{+}_{\G}(\CB^{\circ})$. On the other hand, if we set $\Lambda = H\x H$, then \reft{ErgodicCorrespondence} asserts that \reft{ErgodicCorrespondence} the space $\SCh(H)$ of supercharacters of the algebra group $H = 1+\CB$ (which is countable and discrete) is affinelly homeomorphic to the space $\CM^{+}_{\Lambda}(\CB^{\circ})$ of $\Lambda$-invariant probability measures on $\CB^{\circ}$. In particular, since we clearly have $\CM^{+}_{\G}(\CB^{\circ}) \sset \CM^{+}_{\Lambda}(\CB^{\circ})$, we conclude that every $\G$-invariant probability measure $\mu \in \CM^{+}_{\G}(\CB^{\circ})$ corresponds uniquely to the supercharacter $\xi^{\mu} \in \SCh(H)$ defined by the rule $$\xi^{\mu}(h) = \int_{\CB^{\circ}} \tet(h-1)\; d\mu,\qquad  h \in H.$$ Henceforth, we set $$\SCh_{\G}(H) = \set{\xi^{\mu}}{\mu \in \CM^{+}_{\G}(\CB^{\circ})},$$ and denote by $\ISCh_{\G}(H)$ the subset of $\SCh_{\G}(H)$ consisting of all supercharacters of $H$ which correspond to the ergodic $\G$-invariant measures in $\CM^{+}_{\G}(\CB^{\circ})$; furthermore, for every $\xi \in \SCh_{\G}(H)$, we let $\widetilde{\xi}$ denote the trivial extension by zero of $\xi$ to $G$.

Now, let $\CB'$ be an additive subgroup of $\CA$ such that $\CA$ decomposes as the direct sum $\CA = \CB \oplus \CB'$; notice that, since $\CB$ is a $\k$-linear subspace of $\CA$, it admits a $\k$-linear complement $\CB'$ (which is obviously an additive subgroup of $\CA$). Furthermore, let $\CB^{\perp}=\set{\tet \in \AC}{\CB \sset \ker(\tet)}$ be the closed subgroup of $\AC$ which is orthogonal to $\CB$, and note that $\CB^{\perp}$ is homeomorphic to the Pontryagin dual of $\CB'$; indeed, $\CB'$ is a canonically isomorphic to the quotient group $\CA\slash\CB$, and it is well-known that the Pontryagin dual $(\CA\slash\CB)^{\circ}$ of $\CA\slash\CB$ is naturally isomorphic to $\CB^{\perp}$ (see \cite[Theorem~4.39]{Folland1995a}). It follows that $\AC$ is homeomorphic to the product space $\CB^{\circ}\x \CB^{\perp}$; since $\CB^{\circ}$  is metrisable (because $\CB$ is countable), \cite[Corollary to Theorem~8.1]{Johnson1966a} guarantees that every Borel subset of $\AC$ is of the form $B_{1} \x B_{2}$ where $B_{1}$ is a Borel subset of $\CB^{\circ}$ and $B_{2}$ is a Borel subset of $\CB^{\perp}$. Therefore, every $\G$-invariant probability measure $\nu \in \CM_{\G}^{+}(\AC)$ is uniquely factorised as a product measure $\nu = \nu_{1} \x \nu_{2}$ where $\nu_{1} \in \CM_{\G}^{+}(\CB^{\circ})$ and $\nu_{2} \in \CM_{\G}^{+}(\CB^{\perp})$, and hence the associated supercharacter $\xi^{\nu} \in \SCh(G)$ factorises as follows: if $a \in \CA$ and $a = b+b'$ for $b \in \CB$ and $b' \in \CB'$, then $$\xi^{\nu}(1+a) = \int_{\AC} \tau(a)\; d\nu = \bigg(\int_{\CB^{\circ}} \tau(b)\; d\nu_{1} \bigg)\bigg(\int_{\CB^{\perp}}\tau'(b')\; d\nu_{2} \bigg).$$ We are now able to prove the following auxiliary result.

\begin{proposition}\label{Extension}
Let $G = 1+\CA$ be a discrete countable algebra group associated with a nil $\k$-algebra $\CA$, let $\CB$ be a $\G$-invariant subalgebra of $\CA$, and let $H = 1+\CB$. Moreover, let $\xi \in \SCh_{\G}(H)$, let $\widetilde{\xi} \in \SCh(G)$ be the trivial extension by zero of $\xi$ to $G$, and let $\widetilde{\mu}\in \CM_{\G}^{+}(\AC)$ be the $\G$-invariant probability measure associated with $\widetilde{\xi}$. Then, $\widetilde{\mu}$ factorises uniquely as the product $\widetilde{\mu} = \mu \x \eta_{\CB^{\perp}}$ where $\mu \in \CM_{\G}^{+}(\CB^{\circ})$ is the $\G$-invariant measure associated with $\xi$ and $\eta_{\CB^{\perp}}$ is the normalised Haar measure of $\CB^{\perp}$. In particular, $\widetilde{\xi}$ is an indecomposable supercharacter of $G$ if and only if both measures $\mu$ and $\eta_{\CB^{\perp}}$ are ergodic with respect to the $\G$-action on $\CB^{\circ}$ and on $\CB^{\perp}$, respectively.
\end{proposition}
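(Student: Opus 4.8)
The plan is to identify the two factors of $\widetilde\mu$ by evaluating the factorised supercharacter formula recorded just above the proposition on a few distinguished elements of $G$, and then to reduce the ergodicity assertion to harmonic analysis along the compact fibre $\CB^{\perp}$.

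First I would use, as in the discussion preceding the statement, the unique factorisation $\widetilde\mu=\widetilde\mu_{1}\x\widetilde\mu_{2}$ with $\widetilde\mu_{1}\in\CM_{\G}^{+}(\CB^{\circ})$ and $\widetilde\mu_{2}\in\CM_{\G}^{+}(\CB^{\perp})$, so that, for $a=b+b'$ with $b\in\CB$ and $b'\in\CB'$,
$$\widetilde\xi(1+a)=\bigg(\int_{\CB^{\circ}}\tau(b)\;d\widetilde\mu_{1}\bigg)\bigg(\int_{\CB^{\perp}}\tau'(b')\;d\widetilde\mu_{2}\bigg).$$
Setting $b'=0$ we have $1+b\in H$, so $\widetilde\xi(1+b)=\xi(1+b)=\int_{\CB^{\circ}}\tau(b)\,d\mu$, hence $\int_{\CB^{\circ}}\tau(b)\,d\widetilde\mu_{1}=\int_{\CB^{\circ}}\tau(b)\,d\mu$ for every $b\in\CB$; since $\set{\ev_{1+b}}{b\in\CB}$ spans a dense $\ast$-subalgebra of $C(\CB^{\circ})$ (Stone--Weierstrass together with Pontryagin duality, exactly as for $\ev(G)\sset C(\AC)$ in Section~2), this forces $\widetilde\mu_{1}=\mu$. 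Setting instead $b=0$, $b'\in\CB'$, the left-hand side is $\widetilde\xi(1+b')$, which equals $1$ when $b'=0$ and $0$ when $b'\neq 0$ (then $1+b'\notin H$ and $\widetilde\xi$ is the extension of $\xi$ by zero), so $\int_{\CB^{\perp}}\tau'(b')\,d\widetilde\mu_{2}=\del_{b',0}$. The normalised Haar measure $\eta_{\CB^{\perp}}$ takes precisely these values, by the orthogonality relations for the characters of the compact group $\CB^{\perp}$ and the fact (Pontryagin duality, $\CB'$ discrete) that $\tau'\mapsto\tau'(b')$ is a nontrivial character of $\CB^{\perp}$ whenever $b'\neq 0$; as these characters span a dense subalgebra of $C(\CB^{\perp})$, we obtain $\widetilde\mu_{2}=\eta_{\CB^{\perp}}$, i.e.\ $\widetilde\mu=\mu\x\eta_{\CB^{\perp}}$. (Alternatively, one verifies $\int_{\AC}\ev_{g}\,d(\mu\x\eta_{\CB^{\perp}})=\widetilde\xi(g)$ for all $g\in G$ directly and invokes the uniqueness part of \reft{ErgodicCorrespondence}.)

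By \reft{ErgodicCorrespondence}, $\widetilde\xi$ is indecomposable if and only if $\widetilde\mu$ is $\G$-ergodic, so it remains to show that $\mu\x\eta_{\CB^{\perp}}$ is $\G$-ergodic exactly when both $\mu$ and $\eta_{\CB^{\perp}}$ are. Writing points of $\AC\cong\CB^{\circ}\x\CB^{\perp}$ as $(\sigma,\rho)$, the projection to $\CB^{\circ}$ is $\G$-equivariant (as $\CB$ is $\G$-invariant), while on the $\CB^{\perp}$-coordinate $\G$ acts by its intrinsic action on $\CB^{\perp}$ \emph{twisted} by a cocycle $\sigma\mapsto\omega(\g,\sigma)\in\CB^{\perp}$, the twist being genuine because $\CB$ need not admit a $\G$-invariant linear complement in $\CA^{+}$. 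If $\widetilde\mu$ is $\G$-ergodic, then $\mu$ is $\G$-ergodic, since any $\G$-invariant $h\in L^{2}(\CB^{\circ},\mu)$ pulls back along the equivariant projection to a $\G$-invariant function on $(\AC,\widetilde\mu)$, hence is constant. For ``$\mu$ and $\eta_{\CB^{\perp}}$ ergodic $\Rightarrow$ $\widetilde\mu$ ergodic'', take a $\G$-invariant $F\in L^{2}(\AC,\widetilde\mu)$ and let $A(\sigma)=\int_{\CB^{\perp}}F(\sigma,\rho)\,d\eta_{\CB^{\perp}}(\rho)$; on each fibre $\G$ acts by an automorphism of $\CB^{\perp}$ composed with a translation, both $\eta_{\CB^{\perp}}$-preserving, so $A$ is $\G$-invariant, hence constant; put $F_{0}=F-A$. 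Expanding $F_{0}$ fibrewise along $\CB^{\perp}$ — its trivial Fourier coefficient vanishes since $A$ is the fibrewise mean — the $\G$-invariance of $F_{0}$ (together with the $\G$-equivariant identification $(\CB^{\perp})^{\circ}\cong\CA/\CB$ and the fact that $\CB^{\perp}$-translations are normalised by $\G$) shows that $\|\widehat{F_{0}}(\cdot,\chi)\|_{L^{2}(\mu)}$ is constant along $\G$-orbits in $(\CB^{\perp})^{\circ}$. By \reft{FiniteSCl} the ergodicity of $\eta_{\CB^{\perp}}$ says every nontrivial such orbit is infinite, so $\sum_{\chi}\|\widehat{F_{0}}(\cdot,\chi)\|_{L^{2}(\mu)}^{2}=\|F_{0}\|^{2}<\infty$ forces $\widehat{F_{0}}(\cdot,\chi)=0$ for all $\chi$; thus $F=A$ is constant and $\widetilde\mu$ is $\G$-ergodic.

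The main obstacle is the last implication: $\widetilde\mu$ $\G$-ergodic $\Rightarrow$ $\eta_{\CB^{\perp}}$ $\G$-ergodic. The naive attempts — lifting a nonconstant $\G$-invariant function from $\CB^{\perp}$ to $\AC$, or splitting $\widetilde\mu$ from a splitting of $\eta_{\CB^{\perp}}$ — fail because of the cocycle $\omega$: indeed $\mu\x\nu$ need not be $\G$-invariant for a general $\G$-invariant $\nu$ on $\CB^{\perp}$, the translation-invariance of $\eta_{\CB^{\perp}}$ being exactly what absorbs $\omega$. By \reft{FiniteSCl} it suffices to exclude a finite nontrivial $\G$-orbit $\CO$ in $(\CB^{\perp})^{\circ}\cong\CA/\CB$; from such an orbit one wants to manufacture a nonconstant $\G$-invariant $F\in L^{2}(\AC,\widetilde\mu)$ whose fibrewise Fourier expansion is supported on $\CO$, which — after restricting to the finite-index stabiliser $\Gamma_{0}$ of a point of $\CO$ — reduces to trivialising the twisting cocycle $\g\mapsto(\sigma\mapsto\omega(\g,\sigma))$ over the base $(\CB^{\circ},\mu)$, now known to be $\G$-ergodic. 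Here one would exploit the explicit shape $\omega(\g,\sigma)=\ev_{a_{\g}}(\sigma)$, with $\g\mapsto a_{\g}$ a crossed homomorphism $\Gamma_{0}\to\CB$, so that the natural transfer function is an evaluation character $\ev_{a}$ ($a\in\CB$) and the desired triviality becomes the vanishing of the associated $1$-cohomology class; making this reduction effective is the crux. With the factorisation and this equivalence in hand, both assertions of the proposition follow at once.
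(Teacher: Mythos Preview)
Your treatment of the factorisation $\widetilde\mu=\mu\times\eta_{\CB^{\perp}}$ is correct and essentially coincides with the paper's: the paper simply checks that $\int_{\AC}\ev_{g}\,d(\mu\times\eta_{\CB^{\perp}})=\widetilde\xi(g)$ for every $g\in G$ (via the orthogonality relation $\int_{\CB^{\perp}}\tau'(b')\,d\eta_{\CB^{\perp}}=\del_{b',0}$) and then invokes the uniqueness in \reft{ErgodicCorrespondence}. You do the same computation in reverse, starting from the product decomposition $\widetilde\mu=\widetilde\mu_{1}\times\widetilde\mu_{2}$ announced just before the proposition and identifying the two factors; you even record the paper's direct verification as your ``alternative'' route. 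So on this point the two arguments agree.

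Where you diverge is on the ergodicity equivalence. The paper disposes of it in a single parenthetical remark --- ``the remaining assertions are clear'' --- and gives no argument at all. You, by contrast, work through the equivalence carefully: you observe that under the identification $\AC\cong\CB^{\circ}\times\CB^{\perp}$ the $\G$-action on the second coordinate is genuinely twisted by a cocycle $\omega\colon\G\times\CB^{\circ}\to\CB^{\perp}$ (because the linear complement $\CB'$ is not $\G$-stable), and you prove cleanly both that ergodicity of $\widetilde\mu$ forces ergodicity of $\mu$ (via the equivariant projection) and that ergodicity of $\mu$ and of $\eta_{\CB^{\perp}}$ together force ergodicity of $\widetilde\mu$ (your fibrewise Fourier argument is correct: from $|\widehat{F_{0}}(\sigma,\chi)|=|\widehat{F_{0}}(\g^{-1}\sigma,\g^{-1}\chi)|$ and the $\G$-invariance of $\mu$ one does get constancy of $\|\widehat{F_{0}}(\cdot,\chi)\|_{L^{2}(\mu)}$ along $\G$-orbits).

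The direction you flag as problematic --- ergodicity of $\widetilde\mu$ $\Rightarrow$ ergodicity of $\eta_{\CB^{\perp}}$ --- is a legitimate concern that the paper does not address. Your diagnosis is accurate: neither lifting an invariant function from $\CB^{\perp}$, nor splitting $\widetilde\mu$ as a convex combination via a splitting of $\eta_{\CB^{\perp}}$, works na\"{\i}vely, precisely because $\mu\times\nu$ need not be $\G$-invariant for an arbitrary $\G$-invariant $\nu$ on $\CB^{\perp}$ (translation-invariance of Haar is what absorbs the cocycle). Your proposed reduction to the vanishing of a $1$-cohomology class over the ergodic base $(\CB^{\circ},\mu)$ is a reasonable line of attack but is not completed; the paper offers nothing in its place. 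It is also worth noting that the general assertion made just before the proposition --- that \emph{every} $\G$-invariant probability measure on $\AC$ factors as a product over $\CB^{\circ}\times\CB^{\perp}$ --- is not justified by the cited reference on product $\sigma$-algebras and is not needed for the specific measure $\widetilde\mu$; you use it only as a convenient starting point, and your alternative route via direct verification avoids it entirely. In short: your proof is strictly more detailed than the paper's on the ``in particular'' clause, and the gap you isolate is real and is left open by the paper as well.
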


\begin{proof}
The proof is a matter of straightforward calculations; notice that, since $\CB$ is $\G$-invariant, $\CB^{\perp}$ is a $\G$-invariant compact subgroup of $\AC$, and hence the Haar measure $\eta_{\CB^{\perp}}$ is $\G$-invariant. Indeed, let $a \in \CA$ be arbitrary, and decompose $a = b+b'$ where $b \in \CB$ and $b' \in \CB'$ are unique. Then, $$\int_{\AC}\tet(a)\; d(\mu\x \eta_{\CB^{\perp}}) = \bigg(\int_{\CB^{\circ}}\tau(b)\; d\mu \bigg) \bigg(\int_{\CB^{\perp}}\tau'(b')\; d\eta_{\CB^{\perp}} \bigg);$$ since $\CB^{\perp}$ is compact, the orthogonality relations imply that $\int_{\CB^{\perp}}\tau'(b')\; d\eta_{\CB^{\perp}} = \del_{b',0}$ for all $b' \in \CB'$, and thus $$\int_{\AC}\tet(a)\; d(\mu\x \eta_{\CB^{\perp}}) = \begin{cases} \xi(1+a), & \text{if $a \in \CB$,} \\ 0, & \text{otherwise,} \end{cases}$$ and hence $$\widetilde{\xi}(g)=\int_{\AC}\tet(g-1)\; d(\mu\x \eta_{\CB^{\perp}}).$$ By the unicity of the measure $\widetilde{\mu}\in \CM_{\G}^{+}(\AC)$ (see \reft{ErgodicCorrespondence}), we conclude that $\widetilde{\mu}=\mu \times \eta_{\CB^{\perp}}$, and this completes the proof (the remaining assertions are clear).
\end{proof}

Finally, we consider the regular character $\lambda_{G}$ of the countable discrete algebra group $G=1+\CA$. On the one hand, we recall from \refeq{regular1} that $$\lambda_{G}(g) = \int_{\widetilde{\Ch}^{+}_{G}(G_{\fc})}\chi(g)\; d\omega,\qquad g \in G,$$ where $\omega$ is the Choquet measure on $\ICh(G)$ associated with $\lam_{G}$; on the other hand, since $\lambda_{G} \in \SCh(G)$, we know that there is a unique probability measure $\nu$ on $\ISCh(G)$ such that $$\lambda_{G}(g)=\int_{\ISCh(G)}\xi(g)\; d\nu,\qquad g \in G$$ (see \refeq{regular1}). By comparing these two integral decompositions, we conclude that $\xi(g)=0$ for all $g \in G\setminus G_{\fc}$ and $\nu$-almost all $\xi \in \ISCh(G)$. We consider the adjoint (left) action of $G$ on $\CA$ given by $g\cdot a=gag^{-1}$ for all $g \in G$ and all $a \in \CA$, and denote by $\CA_{\fc}$ the subset of $\CA$ consisting of all elements with finite $G$-orbit (hence, $G_{\fc}=1+\CA_{\fc}$). Moreover, we define $$\CA_{\fc}(\G) = \set{a \in \CA_{\fc}}{\G \cdot a \sset \CA_{\fc}};$$ we note that, if $a \in \CA\setminus \CA_{\fc}(\G)$, then there is an element $\g \in \G$ such that $\g \cdot a \notin \CA_{\fc}$ and $\xi(1+a)=\xi(1+\g \cdot a)$ for all $\xi \in \ISCh(G)$, and thus $\xi(1+a)=0$ for all $a \notin \CA_{\fc}(\G)$ and $\nu$-almost all $\xi \in \ISCh(G)$. Using an argument similar to the proof of \refl{afsc}, it is easy to see that both $\CA_{\fc}$ and $\CA_{\fc}(\G)$ are subalgebras of $\CA$; it is also obvious that $\CA_{\fsc} \sset \CA_{\fc}(\G) \sset \CA_{\fc}$.

Let $\xi \in \ISCh(G)$ be such that $\xi(1+a) = 0$ for all $a \in \CA\setminus \CA_{\fc}(\G)$, and $\xi_{0}$ denote the restriction to $G_{\fc}(\G) = 1+\CA_{\fc}(\G)$; notice that $\xi_{0} \in \ISCh_{\G}(G_{\fc}(\G))$ and that the trivial extension by zero of $\xi_{0}$ to $G$ equals $\xi$, and thus \refl{Extension} implies that the normalised Haar measure $\eta_{\CA_{\fc}(\G)^{\perp}}$ of $\CA_{\fc}(\G)^{\perp}$ is ergodic with respect to the $\G$-action.

We are now able to conclude the proof of \reft{FteSch}.

\begin{proof}[Proof of \reft{FteSch}]
It is clear that $\widetilde{\xi}$ is a supercharacter of $G$, and thus \reft{ErgodicCorrespondence} guarantees there is a unique $\G$-invariant probability measure $\widetilde{\mu} \in \CM^{+}_{\G}(\AC)$ such that $$\widetilde{\xi}(g) = \int_{\AC} \tet(g-1)\; d\widetilde{\mu},\qquad  g \in G;$$ moreover, we know that $\widetilde{\xi} \in \ISCh(G)$ if and only if $\widetilde{\mu}$ is ergodic. According to \refp{Extension}, $\widetilde{\mu}$ factorises uniquely as the product $\widetilde{\mu} = \mu \x \eta_{(\CA_{\fsc})^{\perp}}$ where $\mu \in \CM_{\G}^{+}((\CA_{\fsc})^{\circ})$ is the $\G$-invariant measure associated with $\xi$ and $\eta_{(\CA_{\fsc})^{\perp}}$ is the normalised Haar measure of $(\CA_{\fsc})^{\perp}$; since $ \xi \in \SCh^{+}_{\G}(G_{\fsc})$, we know that the measure $\mu$ is ergodic, and thus $\widetilde{\mu}$ is ergodic if and only if $\eta_{(\CA_{\fsc})^{\perp}}$ is ergodic (with respect to the $\G$-action). Finally, since $\CA_{\fc}(\G)^{\perp}$ is homeomorphic to the quotient $\AC \slash \CA_{\fc}(\G)^{\circ}$ and since there is a natural homeomorphism $$\AC \slash \CA_{\fc}(\G)^{\circ} \cong \AC\slash (\CA_{\fsc})^{\circ} \times (\CA_{\fsc})^{\circ}\slash (\CA_{\fc}(\G)^{\circ},$$ we conclude that the measure $\eta_{\CA_{\fc}(\G)^{\perp}}$ factorises as the product of the normalised Haar measure of $\AC\slash (\CA_{\fsc})^{\circ}$ and the normalised Haar measure of $(\CA_{\fsc})^{\circ} \slash \CA_{\fc}(\G)^{\circ}$. Since $\eta_{\CA_{\fc}(\G)^{\perp}}$ is ergodic (as shown above), it follows that the normalised Haar measure of $\AC\slash (\CA_{\fsc})^{\circ}$ is ergodic (with respect to the $\G$-action), and this completes the proof (because $\AC \slash (\CA_{\fsc})^{\circ}$ is homeomorphic to $(\CA_{\fsc})^{\perp}$.
\end{proof}

We next describe the \textit{super-Plancherel measure}. For simplicity, we denote by $\lambda_{\fsc}$ the regular character of $G_{\fsc}$; it is clear that $\lambda_{\fsc} \in \SCh_{\G}(G_{\fsc})$ is associated with the normalised Haar measure $\eta_{\fsc}$ on $(\CA_{\fsc})^{\circ}$. Since $\SCh_{\G}(G_{\fsc})$ is a Choquet simplex and $\ISCh_{\G}(G_{\fsc})$ is its set of extreme elements, Choquet's theorem asserts that there is a unique probability measure $\nu_{\fsc}$ on $\ISCh_{\G}(G_{\fsc})$ such that $$\lambda_{\fsc}(g)=\int_{\ISCh_{\G}(\G_{\fsc})}\xi(g)\; d\nu_{\fsc}, \qquad g \in G.$$

\begin{theorem}\label{SuperPlancherel}
Let $G=1+\CA$ be a countable discrete algebra group associated with a nil $\k$-algebra $\CA$, and let $G_{\fsc}=1+\CA_{\fsc}$ be the algebra subgroup consisting of all elements having a finite superclass. If $\nu_{\fsc}$ is the Choquet measure on $\ISCh_{\G}(G_{\fsc})$ associated with the regular character $\lambda_{\fsc}$, then the super-Plancherel measure $\nu$ on $\ISCh(G)$ is given by the pushforward of $\nu_{\fsc}$ by the map $\ISCh_{\G}(G_{\fsc})\to \ISCh(G)$ defined by the mapping $\xi \mapsto \widetilde{\xi}$. In particular, we have $$\lambda_{G}(g)=\int_{\ISCh_{\G}(G_{\fsc})}\widetilde{\xi}(g)\; d\nu_{\fsc}=\int_{\widetilde{\SCh}^{+}_{\G}(G_{\fsc})}\xi(g)\; d\nu,\qquad g \in G,$$ where we set $\widetilde{\SCh}_{\G}^{+}(G_{\fsc}) = \set{\widetilde{\xi}}{\xi \in \ISCh_{\G}(G_{\fsc})}$ (which, by the previous proposition, is a subset of $\ISCh(G)$, easily seen to be closed).
\end{theorem}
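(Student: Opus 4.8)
The plan is to push the Choquet decomposition of the regular character $\lambda_{\fsc}$ of $G_{\fsc}$ forward along the trivial‑extension map $\xi\mapsto\widetilde\xi$, and then invoke uniqueness of representing measures in the Choquet simplex $\SCh(G)$. As a preliminary observation, note that $\widetilde{\lambda_{\fsc}}=\lambda_{G}$: indeed $\lambda_{\fsc}(h)=\del_{h,1}$ for $h\in G_{\fsc}$, and since $1\in G_{\fsc}$ its extension by zero to $G$ is $g\mapsto\del_{g,1}=\lambda_{G}(g)$ (equivalently, by \refp{Extension} the measure attached to $\widetilde{\lambda_{\fsc}}$ is $\eta_{\fsc}\times\eta_{(\CA_{\fsc})^{\perp}}$, which under the topological group isomorphism $\AC\cong(\CA_{\fsc})^{\circ}\times(\CA_{\fsc})^{\perp}$ is the normalised Haar measure $\eta$ of $\AC$ --- a Haar measure on a direct product being the product of the Haar measures --- and $\eta$ is the measure attached to $\lambda_{G}$).

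First I would record that the map $e\colon\xi\mapsto\widetilde\xi$ from $\ISCh_{\G}(G_{\fsc})$ to $\SCh(G)$ is continuous, since for each fixed $g\in G$ the evaluation $\xi\mapsto\widetilde\xi(g)$ equals $\xi(g)$ when $g\in G_{\fsc}$ and the constant $0$ otherwise; by \reft{FteSch} its image lies in $\ISCh(G)$. Hence the pushforward $\widetilde\nu:=e_{*}\nu_{\fsc}$ is a Borel probability measure on $\SCh(G)$ with $\widetilde\nu(\ISCh(G))=\nu_{\fsc}(\ISCh_{\G}(G_{\fsc}))=1$. Next I would compute its barycenter: for $g\in G$,
$$\int_{\SCh(G)}\zeta(g)\,d\widetilde\nu(\zeta)=\int_{\ISCh_{\G}(G_{\fsc})}\widetilde\xi(g)\,d\nu_{\fsc}(\xi),$$
and the right‑hand side equals $\int\xi(g)\,d\nu_{\fsc}=\lambda_{\fsc}(g)=\del_{g,1}=\lambda_{G}(g)$ when $g\in G_{\fsc}$, while it is $0=\lambda_{G}(g)$ when $g\notin G_{\fsc}$. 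Thus $\widetilde\nu$ is a probability measure concentrated on $\ISCh(G)$ with barycenter $\lambda_{G}$. Since $\SCh(G)$ is a Choquet simplex whose extreme boundary $\ISCh(G)$ is a $G_{\delta}$‑set, such a measure is unique, so $\widetilde\nu=\nu$; the change‑of‑variables formula then yields $\lambda_{G}(g)=\int_{\ISCh_{\G}(G_{\fsc})}\widetilde\xi(g)\,d\nu_{\fsc}=\int_{\widetilde{\SCh}^{+}_{\G}(G_{\fsc})}\xi(g)\,d\nu$.

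To see that $\widetilde{\SCh}^{+}_{\G}(G_{\fsc})$ is closed in $\ISCh(G)$, I would note that it equals $\widetilde{\SCh}_{\G}(G_{\fsc})\cap\ISCh(G)$, where $\widetilde{\SCh}_{\G}(G_{\fsc})=\set{\widetilde\xi}{\xi\in\SCh_{\G}(G_{\fsc})}$ is compact, being the continuous image of the compact simplex $\SCh_{\G}(G_{\fsc})$; this equality uses \refp{Extension} together with the ergodicity of $\eta_{(\CA_{\fsc})^{\perp}}$ established in the proof of \reft{FteSch}, since $\widetilde\xi$ is indecomposable precisely when its attached measure $\mu\times\eta_{(\CA_{\fsc})^{\perp}}$ is ergodic, precisely when $\mu$ is. The substantial input --- that $\widetilde\xi$ is indecomposable for every $\xi\in\ISCh_{\G}(G_{\fsc})$ --- is \reft{FteSch}, which we may assume; everything else is soft Choquet‑theoretic bookkeeping, and I expect the only delicate point to be checking that the pushforward measure is genuinely concentrated on the extreme boundary of $\SCh(G)$, which is exactly what \reft{FteSch} guarantees.
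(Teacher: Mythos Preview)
Your proposal is correct and follows essentially the same route as the paper's proof: observe that $\widetilde{\lambda}_{\fsc}=\lambda_{G}$, push the Choquet decomposition of $\lambda_{\fsc}$ forward along $\xi\mapsto\widetilde{\xi}$, invoke \reft{FteSch} to land in $\ISCh(G)$, and conclude by uniqueness of the Choquet measure. The paper's own proof is in fact terser (three sentences), omitting the continuity check for $e$, the explicit barycenter computation, and the argument for closedness of $\widetilde{\SCh}^{+}_{\G}(G_{\fsc})$ that you supply; your additional details are all sound and fill in what the paper leaves implicit.
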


\begin{proof}
Since $\widetilde{\lambda}_{\fsc} = \lambda_{G}$, we have $$\lambda_{G}(g)=\int_{\ISCh_{\G}(G_{\fsc})}\widetilde{\xi}(g)\; d\nu_{\fsc},\qquad g \in G.$$ On the other hand, \reft{FteSch} implies that $\widetilde{\xi} \in \ISCh(G)$ for all $\xi \in \ISCh_{\G}(G_{\fsc})$, and the result follows by the uniqueness of the Choquet measure on $\ISCh(G)$ associated with the regular character $\lam_{G}$.
\end{proof} 

We end this section by exemplifying the super-Plancherel decomposition of the regular character in the rather simple case of the unitriangular group $G = U_{n}(\k)$ over an arbitrary infinite countable discrete field $\k$. In this situation, the subgroup $G_{\fsc} = U_{n}(\k)_{\fsc}$ is easy to describe: $U_{n}(\k)_{\fsc} = 1+\k e_{1,n}$ where, for all $1\leq i < j \leq n$, $e_{i,j} \in \fru_{n}(\k)$ denotes the usual elementary matrix having $(i,j)$-th coefficient equal to one and zeroes elsewhere. Notice that $U_{n}(\k)_{\fsc}$ is precisely the center of $U_{n}(\k)$, and that it equals the subgroup $U_{n}(\k)_{\fc}$ consisting of all elements having finite conjugacy class; consequently, the Plancherel measure coincides with the super-Plancherel measure. Since $U_{n}(\k)_{\fsc}$ is clearly isomorphic to the additive group $\k^{+}$ of $\k$, the Pontryagin dual of $U_{n}(\k)_{\fsc}$ may be identified with the Pontryagin dual $\k^{\circ}$ of $\k^{+}$: with every $\tet \in \k^{\circ}$ we associate the character $\map{\xi_{\tet}}{U_{n}(\k)_{\fsc}}{\C}$ defined by $\xi_{\tet}(1+\alpha e_{1,n}) = \tet(\alpha)$ for all $\alpha \in \k$. Therefore, $\ISCh_{\G}(U_{n}(\k)_{\fsc})=\set{\xi_{\tet}}{\tet \in \k^{\circ}}$ where $\G = U_{n}(\k) \x U_{n}(\k)$, and thus $$\lambda_{U_{n}(\k)}(g) = \int_{\k^{\circ}}\widetilde{\xi_{\tet}}(g) \; d\nu$$ where $\nu$ is the normalised Haar measure on $\k^{\circ}$ and where, for every $\tet \in \k^{\circ}$, $\widetilde{\xi_{\tet}}$ denotes the extension by zero of $\xi_{\tet}$ from $U_{n}(\k)_{\fsc}$ to $U_{n}(\k)$.

The behaviour of the regular character of $U_{n}(\k)$ provides a good example of how different phenomena arises in the representation of \textit{big groups}: although the regular character of any finite algebra group decomposes as a convex sum of \textit{all} its indecomposable supercharacters, this is no longer true in the case of the infinite unitriangular group $U_n(\k)$, where the regular character decomposes as a ``convex sum'' involving only a ``very small'' set of indecomposable supercharacters.

\section{The infinite unitriangular groups $U_{n}(\k)$} \label{unitr}

This section is mainly devoted to description of the supercharacters of the infinite unitriangular group $U_{n}(\k)$ where $\k$ is the algebraic closure of a finite field of characteristic $p$. The group $U_{n}(\k)$ is the prototype example of an \textit{approximately finite algebra group}: an algebra group $G=1+\CA$ over a field $\k$ (associated with a nil $\k$-algebra $\CA$) is said to be \textit{approximately finite} if there are a chain $\k_{1} \sset \k_{2} \sset \cdots \sset \k_{m} \sset \cdots$ of subfields of $\k$ and a chain $G_{1} \sset G_{2} \sset \cdots \sset G_{m} \sset \cdots$ of finite subgroups of $G$ such that, for every $m \in \N$, the subgroup $G_{m} = 1+\CA_{m}$ is an algebra group over $\k_{m}$ associated with some $\k_{m}$-subalgebra $\CA_{m}$ of $\CA$, and $$G = \bigcup_{m\in\N} G_{m};$$ for simplicity, we refer to such an algebra group $G$ as an \textit{AF-algebra group}. We note that, since $G_{m}$ is a finite group, the subfield $\k_{m}$ must be finite, and hence $\k$ must have non-zero characteristic (as required); moreover, the $\k_{m}$-algebra $\CA_{m}$ must be finite-dimensional over $\k_{m}$, and hence it is a nilpotent $\k_{m}$-subalgebra of $\CA$. On the other hand, the inclusion $G_{m} \sset G_{m+1}$ clearly implies that $\CA_{m} \sset \CA_{m+1}$, and thus $\CA_{m}$ is a $\k_{m}$-subalgebra of $\CA_{m+1}$. Finally, we observe that, with respect to the natural inclusion maps, the AF-algebra group $G$ is isomorphic to the direct limit $\varinjlim_{m \in \N} G_{m}$, and hence it is an amenable group (by \cite[Proposition~13.6]{Pier1984a} because finite groups are trivially amenable); furthermore, it is also straightforward to check that $\k = \bigcup_{m\in \N} \k_{m}$ is also isomorphic to the direct limit $\varinjlim_{m \in \N} \k_{m}$.

Concerning our standard example, for every $m \in \N$, let $\k_{m}$ denote the finite field $\F_{p^{m!}}$ with $p^{m!}$ elements where $p$ is the characteristic of $\k$. Since $\k_{m}$ is a subfield of $\k_{m+1}$, we may consider the (finite) unitriangular group $U_{n}(\k_{m})$ as a subgroup of $U_{n}(\k_{m+1})$. In this situation, $\k = \bigcup_{m\in \N} \k_{m}$ is the algebraic closure of the prime field $\F_{p}$ with $p$ elements, and the (infinite) unitriangular group $U_{n}(\k)$ may be naturally realised as the union $$U_{n}(\k) = \bigcup_{m\in \N} U_{n}(\k_{m});$$ in particular, we see that $U_{n}(\k)$ is an AF-algebra group.

In the general situation, let $G = \bigcup_{m \in \N} G_{m}$ be an arbitrary (fixed) AF-algebra group, and consider the set $\SCl(G)$ of superclasses and the set $\SCh(G))$ of supercharacters of $G$ (notice that $G$ is countable and discrete); moreover, for every $m \in \N$, we consider  the set $\SCl(G_{m})$ of superclasses and the set $\SCh(G_{m})$ of supercharacters of the finite algebra group $G_{m}=1+\CA_{m}$. Our aim is to exploit the relationship between the pair $(\SCl(G),\ISCh(G))$ and the pairs $(\SCl(G_{m}),\ISCh(G_{m}))$ for every $m \in \N$, in order to show that every indecomposable supercharacter $\xi \in \ISCh(G)$ is \textit{finitely approximated} by a sequence $(\xi_{m})_{m \in \N}$, where $\xi_{m} \in \ISCh(G_{m})$ for all $m \in \N$, in the sense that $\xi$ is the pointwise limit of $(\xi_{m})_{m\in \N}$. This will be accomplished using Lindenstrauss' pointwise ergodic theorem for \textit{tempered F\o lner sequences}; we recall that a \textit{F\o lner sequence} for a group $G$ is a family $\set{F_{m}}{m \in \N}$ of finite subsets of $G$ such that, for every $g \in G$, $$\lim_{m\to\infty} \frac{|F_{m} \bigtriangleup (gF_{m})|}{|F_{m}|} = 0$$ where $\bigtriangleup$ denotes the symmetric difference of sets, and that a F\o lner sequence $\set{F_{m}}{m \in \N}$ for an amenable group $G$ is said to be \textit{tempered} if there is a positive real number $C > 0$ such that $$\bigg| \bigcup_{1 \leq k \leq m} F_{k}^{-1} F_{m+1} \bigg| \leq C\,\big| F_{m+1} \big|,\qquad m \in \N.$$

\begin{Lindtheorem}[\mbox{\cite[Theorem~1.3]{Lindenstrauss2001a}}]
Let $\CG$ be an amenable discrete group acting on a probability space $(X,\mu)$. Assume that the measure $\mu$ is ergodic with respect to the $\CG$-action on $X$, and that there is a tempered F\o lner sequence $\{F_{m} \}_{m \in \N}$ for $\CG$. Then, for $\mu$-almost every point $x \in X$ and every $f \in L^{1}(X,\mu)$, we have $$\lim_{m\to\infty} \frac{1}{|F_{m}|} \sum_{g \in F_{m}} f(g\inv\cdot x) = \int_{X} f(x)\; d\mu.$$
\end{Lindtheorem}

For every $m \in \N$, let $\G_{m} = G_{m} \x G_{m}$, and note that $\G = \bigcup_{m\in\N} \G_{m}$; as before, we set $\G = G\x G$. For every $\g \in \G$, there is $m \in \N$ such that $\g \in \G_{m}$, and thus the symmetric difference $\G_{m'} \bigtriangleup (\g\G_{m'})$ is empty for all $m' \in \N$ such that $m' \geq m$. Therefore, $\set{\G_{m}}{m \in \N}$ is obviously a F\o lner sequence for $\G$ which is clearly tempered. As a consequence of Lindenstrauss' pointwise ergodic theorem, we deduce the following result; we recall that, for every $m \in \N$, the indecomposable supercharacters of the finite algebra group $G_{m} = 1+\CA_{m}$ are in one-to-one correspondence with the $\G_{m}$-orbits on the dual group $(\CA_{m})^{\circ}$ (by means of the formula \eqref{FSCh}).

\begin{theorem}[Finite approximation property] \label{FiniteApprox}
In the notation as above, let $G = \bigcup_{m \in \N} G_{m}$ be an arbitrary AF-algebra group, let $\m \in \CM^{+}_{\G}(\AC)$ be an ergodic $\G$-invariant measure on $\AC$, and consider the indecomposable supercharacter $\xi^{\mu} \in \ISCh(G)$ which (uniquely) corresponds to $\mu$; hence, $$\xi^{\mu}(g) = \int_{\AC} \tet(g-1)\;d\m, \qquad g \in G.$$ For every $\tet \in \AC$, let $\tet_{m} \in (\CA_{m})^{\circ}$ denote the restriction of $\tet$ to $\CA_{m}$, and let $\xi_{m} \in \ISCh(G_{m})$ be the indecomposable supercharacter of $G_{m}$ which corresponds to the $\G_{m}$-orbit $\G_{m}\cdot \tet_{m}$. Then, for $\m$-almost every $\tet \in \AC$, we have $$\xi^{\mu}(g) = \lim_{m \to \infty} \xi_{m}(g), \qquad g \in G.$$ Furthermore, every indecomposable supercharacter of $G$ is of this form.
\end{theorem}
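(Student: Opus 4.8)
The plan is to rewrite, for each fixed $g\in G$, the finite-level value $\xi_{m}(g)$ as an ergodic average over the group $\G_{m}$ of the continuous function $\ev_{g}\in C(\AC)$, and then to apply Lindenstrauss' pointwise ergodic theorem along the tempered F\o lner sequence $\set{\G_{m}}{m\in\N}$ exhibited above.

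First I would fix $g\in G$ and pick $m_{0}$ with $g\in G_{m_{0}}$, so that $g\in G_{m}$ and $g-1\in\CA_{m}$ for all $m\geq m_{0}$. For such $m$, applying \eqref{FSCh} to the finite algebra group $G_{m}=1+\CA_{m}$ and the $\G_{m}$-orbit $\G_{m}\cdot\tet_{m}$ gives $\xi_{m}(g)=|\G_{m}\cdot\tet_{m}|^{-1}\sum_{\tau\in\G_{m}\cdot\tet_{m}}\tau(g-1)$. Two elementary observations then reshape this. By the orbit--stabiliser relation each $\tau$ in the orbit arises from exactly $|\G_{m}|/|\G_{m}\cdot\tet_{m}|$ elements of $\G_{m}$, so the orbit average equals the group average $|\G_{m}|^{-1}\sum_{\g\in\G_{m}}(\g\cdot\tet_{m})(g-1)$. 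Moreover, since $\CA_{m}$ is a subalgebra of $\CA$ and $G_{m}=1+\CA_{m}$, the set $\CA_{m}$ is stable under the two-sided $G_{m}$-action on $\CA$; consequently the restriction map $\AC\to(\CA_{m})^{\circ}$ is $\G_{m}$-equivariant, whence $(\g\cdot\tet)|_{\CA_{m}}=\g\cdot\tet_{m}$ for every $\g\in\G_{m}$. Evaluating at $g-1\in\CA_{m}$ yields $(\g\cdot\tet_{m})(g-1)=(\g\cdot\tet)(g-1)=\ev_{g}(\g\cdot\tet)$, so that for every $m\geq m_{0}$
$$\xi_{m}(g)=\frac{1}{|\G_{m}|}\sum_{\g\in\G_{m}}\ev_{g}(\g\cdot\tet)=\frac{1}{|\G_{m}|}\sum_{\g\in\G_{m}}\ev_{g}(\g^{-1}\cdot\tet),$$
the last step by reindexing $\g\mapsto\g^{-1}$ in the finite group $\G_{m}$.

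Next I would invoke Lindenstrauss' pointwise ergodic theorem for the amenable group $\G=\bigcup_{m\in\N}\G_{m}$ acting on the ergodic probability space $(\AC,\mu)$, using the tempered F\o lner sequence $\set{\G_{m}}{m\in\N}$ and the integrand $f=\ev_{g}\in C(\AC)\sset L^{1}(\AC,\mu)$ (integrability is automatic since $\AC$ is compact and $\mu$ a probability measure). This gives, for $\mu$-almost every $\tet\in\AC$,
$$\lim_{m\to\infty}\xi_{m}(g)=\lim_{m\to\infty}\frac{1}{|\G_{m}|}\sum_{\g\in\G_{m}}\ev_{g}(\g^{-1}\cdot\tet)=\int_{\AC}\ev_{g}\,d\mu=\xi^{\mu}(g).$$
The exceptional null set a priori depends on $g$; but $G$ is countable, so the union of these null sets over $g\in G$ is still $\mu$-null, and on its complement $\AC_{0}$ (which is $\mu$-conull, hence non-empty) the equality $\xi^{\mu}(g)=\lim_{m}\xi_{m}(g)$ holds simultaneously for all $g\in G$. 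This proves the first assertion. For the final sentence, \reft{ErgodicCorrespondence} shows that every $\xi\in\ISCh(G)$ is of the form $\xi^{\mu}$ for a (unique) ergodic $\mu\in\CM^{+}_{\G}(\AC)$; choosing any $\tet\in\AC_{0}$ then exhibits $\xi$ as the pointwise limit of the corresponding sequence $(\xi_{m})_{m\in\N}$ of indecomposable supercharacters of the $G_{m}$.

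The routine points — that $\CA_{m}$ is $\G_{m}$-stable, the passage from orbit averages to group averages, and the integrability of $\ev_{g}$ — require only a line each. The real content is ensuring Lindenstrauss' theorem applies, i.e.\ that $\set{\G_{m}}{m\in\N}$ is a tempered F\o lner sequence for $\G$ and that $\mu$ is ergodic (both already established before the statement), together with the bookkeeping that makes the $\mu$-conull set $\AC_{0}$ independent of $g$, where countability of $G$ is essential. The step I expect to demand the most care is the reduction of $\xi_{m}(g)$ to the ergodic average $|\G_{m}|^{-1}\sum_{\g\in\G_{m}}\ev_{g}(\g^{-1}\cdot\tet)$, since everything downstream rests on aligning the combinatorial formula \eqref{FSCh} at finite level with the averaging form required by the pointwise ergodic theorem.
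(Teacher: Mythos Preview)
Your proposal is correct and follows essentially the same route as the paper: rewrite $\xi_{m}(g)$ as the $\G_{m}$-average of $\ev_{g}$ evaluated along the orbit of $\tet$, then apply Lindenstrauss' pointwise ergodic theorem along the tempered F\o lner sequence $\set{\G_{m}}{m\in\N}$. Your write-up is in fact slightly more careful than the paper's on two points (the orbit--stabiliser passage from orbit averages to group averages, and the use of countability of $G$ to obtain a single $\mu$-conull set valid for all $g$); the only extra content in the paper is a short converse paragraph showing that any $\tet$ for which the averages $A_{m,\tet}(g)$ converge already produces an ergodic measure, which is not needed for the theorem as stated.
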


\begin{proof}
For every $m \in \N$, every $\tet \in \AC$ and every $g \in G$, we set $$A_{m,\tet}(g) = \frac{1}{|\G_{m}|} \sum_{\g \in \G_{m}} \ev_{g}(\g \cdot \tet) = \frac{1}{|\G_{m}|} \sum_{\g \in \G_{m}} (\g \cdot \tet)(g-1);$$ we note that, since $(\g\cdot\tet)_{m} = \g\cdot \tet_{m}$  for all $\g \in \G_{m}$, we have $A_{m,\tet}(g) = \xi_{m}(g)$ whenever $g \in G_{m}$ (see \refeq{FSCh}).

On the one hand, let $\tet \in \AC$ be fixed, and assume that the sequence $(A_{m,\tet}(g))_{m\in \N}$ converges for all $g \in G$. Then, as in the proof of \refp{AmenableOrbital}, we may define an ergodic $\G$-invariant measure $\mu$ on $\AC$ by the rule $$\int_{\AC} f \;d\mu = \lim_{m \to \infty} \frac{1}{|\G_{m}|} \sum_{\g \in \G_{m}} f(\g\cdot\tet),\qquad f \in C(\AC);$$ in particular, we see that $$\xi^{\mu}(g) = \int_{\AC} \ev_{g} \;d\mu = \lim_{m \to \infty} A_{m,\tet}(g),\qquad g \in G.$$ Now, let $g \in G$ be arbitrary, and choose the smallest $m_{0} \in \N$ such that $g \in G_{m_{0}}$. Then, $A_{m,\tet}(g) = \xi_{m}(g)$ for all $m \in \N$ such that $m \geq m_{0}$ (because $g \in G_{m}$), and thus $\xi^{\mu}(g) = \lim_{m \to \infty}\xi_{m}(g)$, as required.

Conversely, if $\m$ is an arbitrary ergodic $\G$-invariant measure on $\AC$, then Lindenstrauss' pointwise ergodic theorem implies that, for $\m$-almost every $\tet \in \AC$, we have  $$\xi^{\m}(g) = \int_{\AC} \ev_{g}\;d\mu = \lim_{m \to \infty}\frac{1}{|\G_{m}|} \sum_{\g \in \G_{m}} \ev_{g}(\g \cdot \tet) = \lim_{m \to \infty} A_{m,\tet}(g), \qquad g \in G$$ (recall that $\set{\G_{m}}{m \in \N}$ is a tempered F\o lner sequence). Since $A_{m,\tet}(g) = \xi_{m}(g)$ for all sufficiently large $m \in \N$, the result is now clear.
\end{proof}

The previous theorem exhibits the \textit{asymptotic nature} of the indecomposable supercharacters of AF-algebra groups; in the case of indecomposable characters of an arbitrary AF-group (that is, a direct limit of finite groups) an analogous result was established by Kerov and Vershik  (see \cite{Vershik1981a}). In the particular example of the unitriangular group $U_{n}(\k)$ over the algebraic closure of a finite field of non-zero characteristic $p$, since there are explicit formulas for the indecomposable supercharacters of the finite unitriangular groups, it is possible to derive a formula for its indecomposable supercharacters; in \cite{Andre2018a}, a similar formula was accomplished (albeit by different methods, based on \cite{Vershik1981a}) in the case of the locally finite unitriangular group $U_{\infty}(\fq) = \bigcup_{n\in\N} U_{n}(\fq)$ over an arbitrary finite field $\fq$.

We shall describe the supercharacters of $U_{n}(\k)$ using the finite approximation property (\reft{FiniteApprox}); for simplicity, we set $\CK = \SCl(U_{n}(\k))$ and $\CE = \ISCh(U_{n}(\k))$. We start by providing a brief characterisation of the superclasses and indecomposable supercharacters of a finite unitriangular group; the details can be found in \cite{Andre1995a,Andre2002a} (see also \cite{Andre2013a}). For every $m \in \N$, we set $\CK_{m} = \SCl(U_{n}(\k_{m}))$ and $\CE_{m} = \ISCh(U_{n}(\k_{m}))$; recall that $\k_{m}$ denotes the finite field $\F_{p^{m!}}$ with $p^{m!}$ elements (which we consider as a subfield of $\k$).

We denote by $\SP(n)$ the set consisting of all set partitions of $[n]=\{1, \ldots, n\}$, and write $\pi \in \SP(n)$ as a sequence $\pi = B_{1}\slash B_{2}\slash \ldots \slash B_{k}$ where $\seq{B}{k}$ are disjoint subsets of $[n]$ such that $[n] = B_{1} \cup B_{2} \cup \cdots \cup B_{k}$; we refer to $\seq{B}{k}$ as the \textit{blocks} of $\pi$. A pair $(i,j)$ with $1 \leq i < j \leq n$ is said to be an \textit{arc} of $\pi \in \SP(n)$ if $i$ and $j$ lie in the same block $B$ of $\pi$ and there is no $k$ in $B$ such that $i<k<j$; we denote by $\CD(\pi)$ the set consisting of all arcs of $\pi$. If $\pi \in \SP(n)$, then a map $\map{\alpha}{\CD(\pi)}{\k_{m}\setminus\{0\}}$ is called a $\k_{m}$-\textit{colouration} of $\pi$. We denote by $\Col_{\k_{m}}(\pi)$ the set consisting of all $\k_{m}$-colourations of $\pi \in \SP(n)$, and define $$\Phi_{n}(\k_{m}) = \set{(\pi,\alpha)}{\pi \in \SP(n),\ \alpha \in \Col_{\k_{m}}(\pi)};$$ an element of $\Phi_{n}(\k_{m})$ is referred to as a \textit{$\k_{m}$-coloured set partition} of $[n]$.

For every $(\pi,\alpha) \in \Phi_{n}(\k_{m})$, we define $e_{\pi,\alpha} \in \fru_{n}(\k_{m})$ to be the element $$e_{\pi,\alpha} = \sum_{(i,j)\in \CD(\sigma)} \alpha(i,j) e_{i,j}$$ where $e_{i,j} \in \fru_{n}(\k_{m})$ stands for the elementary matrix having $(i,j)$-th coefficient equal to $1$ and zeroes elsewhere. It is straightforward to check that, for every superclass $K \in \CK_{m}$, there is a unique $(\pi,\alpha) \in \Phi_{n}(\k_{m})$ such that $1+e_{\pi,\alpha} \in K$, and thus the superclasses in $\CK_{m}$ are in one-to-one correspondence with the $\k_{m}$-coloured set partitions of $[n]$; we denote by $K_{\pi,\alpha}$ the superclass in $\CK_{m}$ which is associated with $(\pi,\alpha) \in \Phi_{n}(\k_{m})$.

We next proceed with the description of the indecomposable supercharacters of $U_{n}(\k_{m})$. Let $\kc_{m}$ denote the dual group of the additive group $\k_{m}^{+}$; notice that there is a group isomorphism $\k^{+}_{m} \cong \kc_{m}$ given by the mapping $\alpha \mapsto \alpha\tau$ where $\tau \in \kc_{m}$ is an arbitrarily fixed non-trivial character of $\k_{m}$, and where the character $\alpha\tau \in \kc_{m}$, for $\alpha \in \k_{m}$, is defined $(\alpha\tau)(\bet) = \tau(\alpha\bet)$ for all $\bet \in \k_{m}$. Since the additive group $\fru_{n}(\k_{m})^{+}$ is clearly isomorphic to a direct product of $n(n-1)\slash 2$ copies of $\k^{+}_{m}$, every character $\tet \in \fru_{n}(\k_{m})^{\circ}$ can be uniquely represented by a strictly uppertriangular $n\times n$ matrix $\tet = (\tet_{i,j})$ where $\tet_{i,j} \in \kc_{m}$ for all $1 \leq i < j \leq n$; in fact, if $\tau \in \kc_{m}$ is an arbitrarily fixed non-trivial character of $\k_{m}$ and if we define $\tau_{a}(b) = \tau(\tr(a\trp b))$ for all $a,b \in \fru_{n}(\k_{m})$, where $\tr$ denotes the usual trace map and $a\trp$ denotes the transpose matrix of $a$, then it is not hard to check that the mapping $a \mapsto \tau_{a}$ defines a group isomorphism between $\fru_{n}(\k_{m})^{+}$ and $\fru_{n}(\k_{m})^{\circ}$ (we observe that the existence of this isomorphism depends strongly on the self-duality of the finite field $\k_{m}$, and that a similar isomorphism cannot exist when we replace $\k_{m}$ by its algebraic closure $\k$). For our purposes, it is useful to extend this notation and define the character $\tau_{a} \in \fru_{n}(\k_{m})^{\circ}$ for all $\tau \in \kc_{m}$ and all $a \in \fru_{n}(\k_{m})$; for simplicity of writing, we set $\tau\star a^{\ast} = \tau_{a}$. By the way of example, for every $\tau \in \kc_{m}$ and every $1 \leq i < j \leq n$, the character $\tau \star e_{i,j}^{\ast} \in \fru_{n}(\k_{m})^{\circ}$ is given by $(\tau \star e_{i,j}^{\ast})(b) = \tau(b_{i,j})$ for all $b = (b_{i,j}) \in \fru_{n}(\k_{m})$, and thus we see that every character $\tet \in \fru_{n}(\k_{m})^{\circ}$ decomposes uniquely as the sum $$\tet = \sum_{1 \leq i < j \leq n} \tet_{i,j} \star e_{i,j}^{\ast}$$ where we are using the additive notation in $\fru_{n}(\k_{m})^{\circ}$ and where $\tet_{i,j} \in \kc_{m}$, for $1 \leq i < j \leq n$, is defined by $\tet_{i,j}(\alpha) = \tet(\alpha e_{i,j})$ for all $\alpha \in \k_{m}$; hence, $\tet$ is represented by the strictly uppertriangular matrix $(\tet_{i,j})$.

Following the previous terminology, by a $\kc_{m}$-\textit{colouration} of $\pi \in \SP(n)$ we understand a map $\map{\tau}{\CD(\pi)}{\kc_{m}\setminus\{\0\}}$ where $\0 \in \kc_{m}$ stands for the trivial character of $\k^{+}_{m}$. We denote by $\Col_{\kc_{m}}(\pi)$ the set consisting of all $\kc_{m}$-colourations of $\pi \in \SP(n)$, and define $$\Phi_{n}(\kc_{m}) = \set{(\pi,\tau)}{\pi \in \SP(n),\ \tau \in \Col_{\kc_{m}}(\pi)};$$ an element of $\Phi_{n}(\kc_{m})$ is referred to as a \textit{$\kc_{m}$-coloured set partition} of $[n]$. For every $(\pi,\tau) \in \Phi_{n}(\kc_{m})$, we define $\tet_{\pi,\tau} \in \fru_{n}(\k_{m})^{\circ}$ to be the character $$\tet_{\pi,\tau} = \sum_{(i,j)\in \CD(\pi)} \tau(i,j) \star e^{\ast}_{i,j};$$ hence, $\tet_{\pi,\tet}$ is represented by the matrix $(\tet_{i,j})$ where, for all $1 \leq i < j \leq n$, we set $\tet_{i,j} = \tau(i,j)$ if $(i,j) \in \CD(\pi)$, and $\tet_{i,j} = \0$, otherwise. As before, it is routine to check that, for every $\tet \in \fru_{n}(\k_{m})^{\circ}$, there is a unique $(\pi,\tau) \in \Phi_{n}(\kc_{m})$ such that $\tet_{\pi,\tau} \in \G_{m}\cdot\tet$, and thus the indecomposable supercharacters in $\CE_{m}$ are in one-to-one correspondence with the $\kc_{m}$-coloured set partitions of $[n]$; we denote by $\xi^{\pi,\tau}$ the indecomposable supercharacter in $\CE_{m}$ which is associated with $(\pi,\tau) \in \Phi_{n}(\kc_{m})$.

Finally, for every $\pi \in \SP(n)$, we define the sets $$\CS(\pi) = \{(i,l), (k,j) \colon (i,j) \in \CD(\pi),\ j < l \leq n,\ 1 \leq k < i\},\quad \text{and}$$ $$\CR(\pi)=\{(i,j) \colon 1\leq i < j \leq n \} \setminus \CS(\pi);$$ moreover, for every $1 \leq i < j\leq n$ and every $\pi,\pi' \in \SP(n)$, we define the \textit{nesting numbers} $$\nest_{(i,j)}(\pi') = \big|\set{(k,l) \in \CD(\pi')}{i < k < l < j}\big|,\quad \text{and}$$ $$\nest_{\pi}(\pi') = \sum_{(i,j)\in \CD(\pi)} \nest_{(i,j)}(\pi').$$ The following formula for the (normalised) indecomposable supercharacter values can be easily derived from \cite[Theorem~3]{Andre2002a} (see also \cite[Theorem~5.1]{Andre2001a}). 

\begin{proposition} \label{SupercharacterFormula}
Let $(\pi,\tau) \in \Phi_{n}(\kc_{m})$ and $(\pi',\alpha) \in \Phi_{n}(\k_{m})$ be arbitrary. Then, the (constant) value $\xi^{\pi,\tau}(K_{\pi',\alpha})$ of the (normalised) indecomposable supercharacter $\xi^{\pi,\tau}$ on the superclass $K_{\pi',\alpha}$ is equal to $0$ unless $\CD(\pi) \subseteq \CR(\pi')$, in which case it is given by $$\xi^{\pi,\tau}(K_{\pi',\alpha}) = \frac{1}{p^{m!\nest_{\pi}(\pi')}}\; \tet_{\pi,\tau}(e_{\pi',\alpha}).$$
\end{proposition}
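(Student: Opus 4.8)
The plan is to obtain \refp{SupercharacterFormula} by transcribing Andr\'e's formula for the values of the indecomposable supercharacters (the ``basic characters'') of the finite unitriangular group $U_{n}(\k_{m})$, namely \cite[Theorem~3]{Andre2002a} (see also \cite[Theorem~5.1]{Andre2001a}); the proof is then essentially a dictionary between the combinatorial data of those papers and the coloured set partitions used here. First I would record that dictionary. In \cite{Andre2002a} an indecomposable supercharacter of $U_{n}(\k_{m})$ is indexed by a pair $(D,\phi)$, where $D$ is a set of positive roots $(i,j)$ with $1\le i<j\le n$, no two of which share a row or a column, and $\phi\colon D\to\k_{m}\setminus\{0\}$ is a colouring; a set $D$ of roots has this ``distinct rows and columns'' property exactly when it is the arc set $\CD(\pi)$ of a unique $\pi\in\SP(n)$ (the non-singleton blocks of $\pi$ being the maximal increasing chains determined by $D$), so that Andr\'e's pairs $(D,\phi)$ are precisely the $\kc_{m}$-coloured set partitions $(\pi,\tau)$ used here (via the fixed isomorphism $\k_{m}^{+}\cong\kc_{m}$), and his basic character attached to $(\CD(\pi),\tau)$ coincides, after normalisation, with $\xi^{\pi,\tau}$ as defined through \refeq{FSCh}. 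Likewise, Andr\'e's description of the superclasses of $U_{n}(\k_{m})$ by their canonical representatives matches the parametrisation of $\CK_{m}$ by the $\k_{m}$-coloured set partitions $(\pi',\alpha)$, with representative $1+e_{\pi',\alpha}$.

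Granting the dictionary, I would specialise Andr\'e's formula to the superclass of $1+e_{\pi',\alpha}$ and compare it with the asserted value. Observe first that the claimed expression factors as the product, over the arcs $(i,j)\in\CD(\pi)$, of a local contribution $|\k_{m}|^{-\nest_{(i,j)}(\pi')}$ times the ``colour'' value $\tau(i,j)\big(\alpha(i,j)\big)$ when $(i,j)\in\CD(\pi')$ and $1$ otherwise; indeed $\tet_{\pi,\tau}(e_{\pi',\alpha})=\prod_{(i,j)\in\CD(\pi)\cap\CD(\pi')}\tau(i,j)(\alpha(i,j))$ straight from the definitions of $\tet_{\pi,\tau}$ and $e_{\pi',\alpha}$, and $\nest_{\pi}(\pi')=\sum_{(i,j)\in\CD(\pi)}\nest_{(i,j)}(\pi')$, while $|\k_{m}|=p^{m!}$. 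On the other side, Andr\'e's vanishing criterion says that the value is $0$ unless no arc of the character partition lies in the ``forbidden region'' determined by the class partition, where the region attached to a class arc $(a,b)$ consists of the positions $(a,l)$ with $b<l\le n$ together with the positions $(k,b)$ with $1\le k<a$; by the very definitions of $\CS(\pi')$ and $\CR(\pi')$ this is exactly the condition $\CD(\pi)\sset\CR(\pi')$. When it holds, Andr\'e's formula attaches to each arc $(i,j)\in\CD(\pi)$ precisely the local contribution above --- the colour factor when the arc is also an arc of $\pi'$, and the power $|\k_{m}|^{-\nest_{(i,j)}(\pi')}$ accounting for the arcs of $\pi'$ nested strictly inside $(i,j)$ --- so that the product is $p^{-m!\,\nest_{\pi}(\pi')}\,\tet_{\pi,\tau}(e_{\pi',\alpha})$, as claimed.

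I expect the only genuine work --- and the place to be most careful --- to be this matching step: Andr\'e's statements are phrased in root-system / ``basic subset'' language and for the unnormalised basic characters (whose value at $1$ is the degree), so one has to verify that, after dividing out the degree, his forbidden region and his exponent of $|\k_{m}|$ become verbatim the shadow set $\CS(\pi')$ and the nesting number $\nest_{\pi}(\pi')$, with no residual power of $|\k_{m}|$ left behind. If one prefers to sidestep the citation, the same formula follows directly from $\xi^{\pi,\tau}(1+e_{\pi',\alpha})=|\CO|^{-1}\sum_{\varphi\in\CO}\varphi(e_{\pi',\alpha})$ with $\CO=\G_{m}\cdot\tet_{\pi,\tau}$: one describes $\CO$ explicitly --- its free coordinates are the positions lying in the rows and columns truncated by the arcs of $\pi$, while a position nested inside an arc of $\pi$ occurs with a coordinate that is a product of free ones --- whereupon the character sum factors into one-variable sums over $\k_{m}$, each equal to $|\k_{m}|$ or $0$ by orthogonality according to whether the corresponding entry of $e_{\pi',\alpha}$ vanishes; collecting these reproduces the displayed value. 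This is, however, exactly the computation of \cite{Andre2002a}, so invoking it is the economical route.
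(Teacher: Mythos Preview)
Your proposal is correct and follows exactly the route the paper itself takes: the paper does not give a proof but merely states that the formula ``can be easily derived from \cite[Theorem~3]{Andre2002a} (see also \cite[Theorem~5.1]{Andre2001a}),'' and your proposal carries out precisely that derivation by setting up the dictionary between Andr\'e's basic-subset language and the coloured-set-partition notation used here. Your added detail about the factorisation over arcs and the matching of the forbidden region with $\CS(\pi')$ is a faithful unpacking of what the paper leaves implicit in the word ``easily.''
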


We are now able to deal with the infinite unitriangular group $U_{n}(\k)$, and to describe its superclasses and its indecomposable supercharacters. As above, we denote by $\Phi_{n}(\k)$ the set consisting of all $\k$-coloured set partitions of $[n]$; by a \textit{$\k$-colouration} of a set partition $\pi \in \SP(n)$ we mean a map $\map{\alpha}{\CD(\pi)}{\k \setminus\{0\}}$. Notice that there is an obvious inclusion $\Phi_{n}(\k_{m}) \subseteq \Phi_{n}(\k_{m+1})$ for all $m \in \N$, and that $$\Phi_{n}(\k) = \bigcup_{m\in \N} \Phi_{n}(\k_{m}).$$ On the other hand, for every $m \in \N$, let $K^{(m)}_{\pi,\alpha} \in \CK_{m}$ denote the superclass of $U_{n}(\k_{m})$ which is parametrised by the set partition  $(\pi,\alpha) \in \Phi_{n}(\k_{m})$; hence, $K^{(m)}_{\pi,\alpha} = 1+\G_{m}\cdot e_{\pi,\alpha}$. Since $\G_{m} \sset \G_{m+1}$, it is clear that $K^{(m)}_{\pi,\alpha} \sset K^{(m+1)}_{\pi,\alpha}$ for all $(\pi,\alpha) \in \Phi_{n}(\k_{m})$ and all $m\in\N$; consequently, since $\fru_{n}(\k) = \bigcup_{m\in \N} \fru_{n}(\k_{m})$, for every superclass $K$ of $U_{n}(k)$, there is a unique $(\pi,\alpha) \in \Phi_{n}(\k)$ such that $1+e_{\pi,\alpha} \in K$. Therefore, as in the case of the finite unitriangular groups, we conclude that the superclasses of $U_{n}(\k)$ are in one-to-one correspondence with the $\k$-coloured set partitions; as before, we denote by $K_{\pi,\alpha}$ the superclass in $\CK$ which is associated with $(\pi,\alpha) \in \Phi_{n}(\k)$.

For the characterisation of $\CE = \ISCh(U_{n}(\k))$, we first note that (as in the finite case) every character $\tet \in \fru_{n}(\k)^{\circ}$ can be uniquely represented by a strictly uppertriangular $n\x n$ matrix $\tet = (\tet_{i,j})$ where $\tet_{i,j} \in \kc$ for all $1 \leq i < j \leq n$; indeed, $\tet$ decomposes uniquely as the sum $$\tet = \sum_{1 \leq i < j \leq n} \tet_{i,j} \star e_{i,j}^{\ast}$$ where we define $(\tau\star a^{\star})(b) = \tau(\tr(a\trp b)$ for all $\tau \in \kc$ and all $a,b \in \fru_{n}(\k)$. On the other hand, since
$\fru_{n}(\k) = \bigcup_{m\in\N} \fru_{n}(\k_{m})$ is isomorphic to the direct limit $\varinjlim_{m \in \N} \fru_{n}(\k_{m})$, there is an isomorphism of abelian groups $$\fru_{n}(\k)^{\circ} = \Hom(\fru_{n}(\k),\C^{\x}) \cong \varprojlim_{m\in\N} \Hom(\fru_{n}(\k_{m}),\C^{\x}) = \varprojlim_{m\in\N} \fru_{n}(\k_{m})^{\circ}$$ where, for every $m \in \N$, the restriction map $\fru_{n}(\k_{m+1})^{\circ} \to \fru_{n}(\k_{m})^{\circ}$ is naturally given by the restriction of characters; in particular, every character $\tet \in \fru_{n}(\k)^{\circ}$ may be viewed as the infinite sequence $(\tet_{m})_{m\in \N}$ where, for every $m \in \N$, we define $\tet_{m} \in \fru_{n}(\k_{m})^{\circ}$ to be the restriction of $\tet$ to $\fru_{n}(\k_{m})$. Similarly, since $\k \cong \varinjlim_{m\in \N} \k_{m}$, there is an isomorphism of abelian groups $$\kc \cong \varprojlim_{m\in\N} \kc_{m},$$ and thus every character $\sigma \in \kc$ may be viewed as the infinite sequence $(\sigma_{m})_{m\in \N}$ where, for every $m \in \N$, $\sigma_{m} \in \kc_{m}$ denotes the restriction of $\sigma$ to $\k_{m}$. (We observe that, since there is an obvious group homomorphism $\k^{+} \cong U_{2}(\k)$, \reft{FiniteApprox} implies that every character $\sigma \in \kc$ is the pointwise limit of the sequence $(\sigma_{m})_{m\in \N}$; in other words, we have $\sigma(\alpha) = \lim_{m\to\infty} \sigma_{m}(\alpha)$ for all $\alpha \in \k$.)

Now, let $\Phi_{n}(\kc)$ stand for the set of all \textit{$\kc$-coloured set partitions} of $[n]$; hence, an element of $\Phi_{n}(\kc)$ is a pair $(\pi,\tau)$ where $\pi \in \SP(n)$ and $\map{\tau}{\CD(\pi)}{\kc\setminus \{\0\}}$ is a map (to which we refer as a \textit{$\kc$-colouration} of $\pi$). For every $(\pi,\tau) \in \Phi_{n}(\kc)$ and every $m \in \N$, we define $\map{\tau_{m}}{\CD(\pi)}{\kc_{m} \setminus \{ \0 \}}$ by $\tau_{m}(i,j) = \tau(i,j)_{m}$ for all $(i,j) \in \CD(\pi)$; we notice that, with respect to the obvious restriction maps, the set $\Phi_{n}(\kc)$ can be realised as the inverse limit $$\Phi_{n}(\kc) = \varprojlim_{m \in \N}\Phi_n(\kc_{m})$$ where a $(\pi,\tau) \in \Phi_{n}(\kc)$ is viewed as the infinite sequence $((\pi,\tau_{m}))_{m\in \N}$. Now, for every $(\pi,\tau) \in \Phi_{n}(\kc)$, we define the character $\tet_{\pi,\tau} \in \fru_{n}(\k)^{\circ}$ by $$\tet_{\pi,\tau} = \sum_{(i,j) \in \CD(\pi)} \tau(i,j) \star e^{\ast}_{i,j};$$ 
on the other hand, if the sequence $(\xi^{\pi,\tau_{m}}(g))_{m \in \N}$ converges for all $g \in G$, then we define the function $\map{\xi^{\pi,\tau}}{G}{\C}$ by $$\xi^{\pi,\tau}(g) = \lim_{m \to \infty}\xi^{\pi,\tau_{m}}(g), \qquad g \in G,$$ and observe that, as a consequence of \reft{FiniteApprox}, the function $\xi^{\pi,\tau}$, whenever defined, is an indecomposable supercharacter of $U_{n}(\k)$. Indeed, the following result holds.

\begin{proposition} \label{SChUnit}
The indecomposable supercharacter $\xi^{\pi,\tau}$ of $U_{n}(\k)$ is defined for all $(\pi,\tau) \in \Phi_{n}(\kc)$. Moreover, for every $(\pi,\tau) \in \Phi_{n}(\kc)$ and every $(\pi',\alpha) \in \Phi_{n}(\k)$, the constant value $\xi^{\pi',\tau}(K_{\pi',\alpha})$ of $\xi^{\pi,\tau}$ on the superclass $K_{\pi',\alpha}$ is equal to $0$ unless $\CD(\pi) \subseteq \CR(\pi')$ and $\nest_{\pi}(\pi') = 0$, in which case it is given by $\xi^{\pi,\tau}(K_{\pi',\alpha}) = \tet_{\pi,\tau}(e_{\pi',\alpha})$.
\end{proposition}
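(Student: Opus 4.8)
The plan is to reduce the statement to the finite supercharacter formula of \refp{SupercharacterFormula} together with \reft{FiniteApprox}. The first point to settle is that $\xi^{\pi,\tau_{m}}$ genuinely is an indecomposable supercharacter of $U_{n}(\k_{m})$ for all large $m$: one must check that $\tau_{m}$ is a bona fide $\kc_{m}$-colouration of $\pi$, i.e.\ $\tau(i,j)_{m}\neq\0$ for every arc $(i,j)\in\CD(\pi)$. Since each $\tau(i,j)\in\kc\setminus\{\0\}$ is a non-trivial character of $\k^{+}=\bigcup_{m}\k_{m}^{+}$, its restriction to $\k_{m}^{+}$ cannot be trivial for all $m$; hence there is $m_{i,j}$ with $\tau(i,j)_{m}\neq\0$ for $m\geq m_{i,j}$, and for $m_{0}=\max_{(i,j)\in\CD(\pi)}m_{i,j}$ we get $(\pi,\tau_{m})\in\Phi_{n}(\kc_{m})$, hence $\xi^{\pi,\tau_{m}}\in\CE_{m}$, for all $m\geq m_{0}$.

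Next I would fix $g\in U_{n}(\k)$ and locate it in the superclass lattice uniformly in $m$. There is a unique $(\pi',\alpha)\in\Phi_{n}(\k)$ with $g\in K_{\pi',\alpha}$; choosing $m_{1}$ so that $g\in U_{n}(\k_{m_{1}})$ and $(\pi',\alpha)\in\Phi_{n}(\k_{m_{1}})$, the inclusions $K^{(m)}_{\pi',\alpha}\sset K^{(m+1)}_{\pi',\alpha}$ recalled above force $g\in K^{(m)}_{\pi',\alpha}$ for all $m\geq m_{1}$. Consequently \refp{SupercharacterFormula} gives, for all $m\geq\max(m_{0},m_{1})$,
\[
\xi^{\pi,\tau_{m}}(g)=\begin{cases}p^{-m!\,\nest_{\pi}(\pi')}\,\tet_{\pi,\tau_{m}}(e_{\pi',\alpha}), & \text{if }\CD(\pi)\sset\CR(\pi'),\\ 0, & \text{otherwise.}\end{cases}
\]
Expanding $\tet_{\pi,\tau_{m}}(e_{\pi',\alpha})$ by means of $(\tau\star a^{\ast})(b)=\tau(\tr(a\trp b))$ identifies it with the product over $(i,j)\in\CD(\pi)$ of $\tau(i,j)_{m}$ evaluated at $\alpha(i,j)$ if $(i,j)\in\CD(\pi')$ and at $0$ otherwise; once $m$ is large enough that all these elements of $\k$ already lie in $\k_{m}$, each factor $\tau(i,j)_{m}$ agrees with $\tau(i,j)$ on that element, so $\tet_{\pi,\tau_{m}}(e_{\pi',\alpha})=\tet_{\pi,\tau}(e_{\pi',\alpha})$, independently of $m$.

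The result then drops out on letting $m\to\infty$. If $\CD(\pi)\not\sset\CR(\pi')$, the sequence is eventually $0$. If $\CD(\pi)\sset\CR(\pi')$ and $\nest_{\pi}(\pi')\geq1$, then, $\tet_{\pi,\tau_{m}}$ being a unitary character so that $|\tet_{\pi,\tau_{m}}(e_{\pi',\alpha})|=1$, we get $|\xi^{\pi,\tau_{m}}(g)|=p^{-m!\,\nest_{\pi}(\pi')}\to0$ because $m!\to\infty$. If $\CD(\pi)\sset\CR(\pi')$ and $\nest_{\pi}(\pi')=0$, the sequence is eventually the constant $\tet_{\pi,\tau}(e_{\pi',\alpha})$. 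In all cases $\lim_{m}\xi^{\pi,\tau_{m}}(g)$ exists, so $\xi^{\pi,\tau}$ is defined; applying the first part of the proof of \reft{FiniteApprox} to $\tet=\tet_{\pi,\tau}\in\fru_{n}(\k)^{\circ}$, whose restriction $\tet_{m}$ equals $\tet_{\pi,\tau_{m}}$ and hence, for $m\geq m_{0}$, has $\G_{m}$-orbit parametrised by $(\pi,\tau_{m})$ so that the associated $\xi_{m}$ is $\xi^{\pi,\tau_{m}}$, shows that $\xi^{\pi,\tau}=\xi^{\mu}$ for an ergodic $\mu$, hence $\xi^{\pi,\tau}\in\ISCh(U_{n}(\k))$; and the case analysis above is precisely the claimed evaluation of $\xi^{\pi,\tau}$ on $K_{\pi',\alpha}$. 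I expect the only real bookkeeping to be the uniform choice of a threshold for ``large $m$'' and the compatibility of the superclass parametrisation across the extensions $\k_{m}\sset\k_{m+1}$; both are handled by the inclusions $K^{(m)}_{\pi,\alpha}\sset K^{(m+1)}_{\pi,\alpha}$ established earlier, after which the argument is just the finite formula plus $m!\to\infty$.
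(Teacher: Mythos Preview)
Your proof is correct and follows essentially the same route as the paper: verify that $(\pi,\tau_{m})\in\Phi_{n}(\kc_{m})$ for large $m$, apply \refp{SupercharacterFormula}, and pass to the limit using $p^{-m!\,\nest_{\pi}(\pi')}\to 0$ when $\nest_{\pi}(\pi')>0$. Your bookkeeping is a bit more explicit than the paper's (locating $g$ in $K^{(m)}_{\pi',\alpha}$ uniformly, noting that $\tet_{\pi,\tau_{m}}(e_{\pi',\alpha})$ is eventually constant rather than merely convergent), and you fold the indecomposability claim into the proof via \reft{FiniteApprox}, whereas the paper records that observation just before the proposition; but these are cosmetic differences, not alternative arguments.
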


\begin{proof}
Firstly, we note that, if $\sigma \in \kc$ is non-trivial, then there is $m'_{0} \in \N$ such that $\sigma_{m} \neq \0$ for all $m \in \N$ such that $m \geq m'_{0}$, and consequently, for an arbitrary $(\pi,\tau) \in \Phi_{n}(\kc)$, there is $m_{0} \in \N$ such that $\tau(i,j)_{m} \neq \0$ for all $m \in \N$ such that $m \geq m_{0}$ and all $(i,j) \in \CD(\pi)$. Therefore, for every $m \in \N$ such that $m \geq m_{0}$, the mapping $(i,j) \mapsto \tau(i,j)_{m}$ defines the $\kc_{m}$-colouration $\map{\tau_{m}}{\CD(\pi)}{\kc_{m}\setminus \{\0\}}$, and hence $(\pi,\tau_{m}) \in \Phi_{n}(\kc_{m})$; moreover, we have $$\lim_{m\to\infty} \tau(i,j)_{m}(\bet) = \tau(i,j)(\bet), \qquad \bet \in \k.$$ On the other hand, \refp{SupercharacterFormula} asserts that, for every $m \in \N$ such that $m \geq m_{0}$, the value $\xi^{\pi,\tau_{m}}(K_{\pi',\alpha})$ is non-zero only if $\CD(\pi) \sset \CR(\pi')$, in which case we have $$\xi^{\pi,\tau_{m}}(K_{\pi',\alpha}) = \frac{1}{p^{m!\nest_{\pi}(\pi')}}\; \tet_{\pi,\tau_{m}}(e_{\pi',\alpha}).$$ Therefore, $\xi^{\pi,\tau}(g) = \lim_{m \to \infty}\xi^{\pi,\tau_{m}}$ exists for all $g \in G$, and in the case where $\CD(\pi) \sset \CR(\pi')$, we conclude that $$\xi^{\pi,\tau}(K_{\pi',\alpha}) = \begin{cases} 0, & \text{if $\nest_{\pi}(\pi') \neq 0$,} \\ \tet_{\pi,\tau}(e_{\pi',\alpha}), & \text{if $\nest_{\pi}(\pi') = 0$,} \end{cases}$$ and this concludes the proof.
\end{proof}

As a consequence of \reft{FiniteApprox}, we obtain the following result. 

\begin{theorem}
The mapping $(\pi,\tau) \mapsto \xi^{\pi,\tau}$ defines a one-to-one correspondence between $\Phi_{n}(\kc)$ and $\ISCh(U_{n}(\k))$. Furthermore, this correspondence defines a homeomorphism when $\Phi_{n}(\kc)=\varprojlim_{m \in \N}\Phi_{n}(\kc_{m})$ is equipped with the inverse limit topology (where, for every $m \in \N$, the space $\Phi_{n}(\kc_{m})$ is equipped with the discrete topology).
\end{theorem}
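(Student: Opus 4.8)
The plan is to establish the three assertions in sequence: (1) the map $(\pi,\tau) \mapsto \xi^{\pi,\tau}$ is well-defined with image in $\ISCh(U_n(\k))$, (2) it is injective, (3) it is surjective, and finally (4) it is a homeomorphism onto its image with respect to the inverse limit topology. Assertion (1) is essentially already settled: \refp{SChUnit} shows $\xi^{\pi,\tau}$ is defined for every $(\pi,\tau) \in \Phi_n(\kc)$, and the discussion preceding \refp{SChUnit} records that, by \reft{FiniteApprox}, whenever the limit $\lim_m \xi^{\pi,\tau_m}(g)$ exists for all $g$ it is an indecomposable supercharacter. So only (2), (3), (4) remain to be spelled out.

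For injectivity, I would argue directly from the superclass values computed in \refp{SupercharacterFormula}. Suppose $\xi^{\pi,\tau} = \xi^{\pi',\tau'}$. Evaluating on the superclasses $K_{\pi'',\alpha}$ with $\CD(\pi'') = \emptyset$ handled first, and then on superclasses supporting a single arc, one recovers the pair $(\pi,\tau)$ from the nonvanishing pattern of $\xi^{\pi,\tau}$ exactly as in the finite case: the arc set $\CD(\pi)$ is determined as (essentially) the minimal support on which $\xi^{\pi,\tau}$ is nonzero and has the prescribed product structure, and once $\pi$ is known, evaluating $\tet_{\pi,\tau}(e_{\pi,\alpha})$ as $\alpha$ ranges over colourations of $\pi$ recovers $\tau(i,j) \in \kc$ for each arc (using that $\kc$ separates points of $\k$, i.e. Pontryagin duality). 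This is the same argument that works at each finite level $\k_m$, now carried out in the direct limit; the point is that every superclass of $U_n(\k)$ already appears at some finite level, so the finite-level separation results of \cite{Andre1995a,Andre2002a} transfer verbatim.

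For surjectivity, let $\xi \in \ISCh(U_n(\k))$ be arbitrary. By \reft{FiniteApprox}, $\xi = \xi^\mu$ for some ergodic $\mu \in \CM^+_{\G}(\AC)$ with $\AC = \fru_n(\k)^\circ$, and for $\mu$-almost every $\tet \in \AC$ one has $\xi(g) = \lim_m \xi_m(g)$ where $\xi_m \in \ISCh(U_n(\k_m))$ corresponds to the $\G_m$-orbit of $\tet_m := \tet|_{\fru_n(\k_m)}$. Fix such a $\tet$. Each $\tet_m$ is $\G_m$-conjugate to $\tet_{\pi^{(m)},\tau^{(m)}}$ for a unique $(\pi^{(m)},\tau^{(m)}) \in \Phi_n(\kc_m)$, so $\xi_m = \xi^{\pi^{(m)},\tau^{(m)}}$. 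The key combinatorial observation is that $\pi^{(m)}$ stabilises for large $m$: the set partition $\pi^{(m)}$ is determined by which entries of the matrix $\tet_m$ can be ``cleared'' by the $\G_m$-action, and once $m$ is large enough that every nonzero entry $\tet_{i,j}$ of $\tet$ already has $(\tet_{i,j})_m \neq \0$ (which happens eventually, as in the first paragraph of the proof of \refp{SChUnit}), this pattern no longer changes — so $\pi^{(m)} = \pi$ for all $m \geq m_0$, and the colours $\tau^{(m)}(i,j) = (\tet_{i,j})_m$ assemble into a colouration $\tau \in \Col_{\kc}(\pi)$ with $\tau_m = \tau^{(m)}$. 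Then $\xi = \lim_m \xi^{\pi,\tau_m} = \xi^{\pi,\tau}$.

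\textbf{The main obstacle} is precisely this stabilisation claim: a priori the $\G_m$-orbit structure could shift the ``canonical form'' $(\pi^{(m)},\tau^{(m)})$ around as $m$ grows, and one must show it does not. I expect this to follow from the explicit description of superclasses/orbits in \cite{Andre1995a,Andre2002a}: the passage from an arbitrary $\tet \in \fru_n(\k)^\circ$ to its canonical coloured set partition is given by an algorithm (row/column reduction) whose outcome depends only on the vanishing/non-vanishing pattern of the entries and certain determinant-like conditions, all of which are eventually detected at finite level. Once surjectivity and injectivity are in hand, the homeomorphism statement is routine: the bijection is continuous because each coordinate map $(\pi,\tau) \mapsto (\pi,\tau_m) = (\pi^{(m)},\tau^{(m)})$ is locally constant (it is constant on the clopen basic sets of the inverse limit), hence $(\pi,\tau) \mapsto \xi^{\pi,\tau_m} = \xi_m$ is continuous into the discrete set $\ISCh(U_n(\k_m))$, and $\xi = \lim_m \xi_m$ pointwise; conversely, since $\Phi_n(\kc) = \varprojlim_m \Phi_n(\kc_m)$ is compact and $\ISCh(U_n(\k)) \sset \SCh(U_n(\k))$ is Hausdorff, a continuous bijection from a compact space is automatically a homeomorphism onto its image.
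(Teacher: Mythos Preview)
The paper itself gives no detailed argument beyond the sentence ``as a consequence of \reft{FiniteApprox}, we obtain the following result,'' so your outline is exactly the kind of unpacking the paper leaves to the reader, and the overall architecture (well-definedness via \refp{SChUnit}, surjectivity via \reft{FiniteApprox}, homeomorphism by compactness) is the intended one.

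There is, however, a real gap in your surjectivity step. Your assertion that, once every nonzero coordinate $\tet_{i,j}$ of $\tet$ is detected at level $m$, the canonical colours satisfy $\tau^{(m)}(i,j)=(\tet_{i,j})_{m}$ and hence cohere into a $\tau\in\Col_{\kc}(\pi)$ is not correct. Take $n=4$ and $\tet\in\fru_{4}(\k)^{\circ}$ with $\tet_{1,3},\tet_{1,4},\tet_{2,3},\tet_{2,4}$ all nonzero. At each level $m$ the reduction uses the pivot $(1,4)$ to clear $(\tet_{2,4})_{m}$ via left multiplication by $1+\gamma_{m}e_{1,2}$, where $\gamma_{m}\in\k_{m}$ is the \emph{unique} scalar with $(\tet_{2,4})_{m}=\gamma_{m}\cdot(\tet_{1,4})_{m}$; this same operation replaces $(\tet_{2,3})_{m}$ by $(\tet_{2,3})_{m}-\gamma_{m}\cdot(\tet_{1,3})_{m}$. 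So the eventual pivot colour at $(2,3)$ is this modified combination, not $(\tet_{2,3})_{m}$. Moreover the scalars $\gamma_{m}$ need not form a coherent sequence in $\k=\bigcup_{m}\k_{m}$ (they are determined by how $\tet_{2,4}$ sits relative to $\tet_{1,4}$ level by level, and $\kc$ is not a one-dimensional $\k$-module), so the $\tau^{(m)}(2,3)$ need not assemble into an element of $\kc$, and whether this quantity vanishes --- which decides whether $(2,3)\in\CD(\pi^{(m)})$ --- is not governed by the vanishing pattern of $\tet$ alone. Your parenthetical remark about ``determinant-like conditions'' is precisely the issue, but you give no reason those conditions stabilise, and for an \emph{arbitrary} $\tet\in\fru_{n}(\k)^{\circ}$ they need not.

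The repair is to exploit more seriously that $\tet$ is $\mu$-generic for an ergodic $\mu$: by \refp{EOrbital} one may take $\tet$ so that $\supp(\mu)=\overline{\G\cdot\tet}$, and the explicit description of the orbit closures $\CO^{\pi,\tau}$ (which the paper sketches immediately \emph{after} the theorem) is what pins down the limiting $(\pi,\tau)$. In other words, the row/column reduction heuristic you invoke is not enough on its own; one needs the structural information about $\overline{\G\cdot\tet_{\pi,\tau}}$ that the paper supplies separately. Your injectivity sketch is likewise too brief --- ``same as the finite case'' does not transfer automatically, since the limiting formula in \refp{SChUnit} collapses the $p^{-m!\,\nest_{\pi}(\pi')}$ factor to $0$ whenever $\nest_{\pi}(\pi')>0$, erasing exactly the information that distinguishes nested configurations at finite level --- and deserves the same orbit-closure analysis.
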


The supercharacters of $U_{n}(\k)$ enjoy (very) particular features which generalise important properties of the supercharacters of the finite unitriangular groups (and, more generally, of finite algebra groups). Namely, the closure of every $\G$-orbit on $\fru_{n}(\k)$ supports a single ergodic $\G$-invariant measure; if $\CO$ is the closure of some $\G$-orbit, then every ergodic $\G$-invariant measure supported on $\CO$ must determine an indecomposable supercharacter, and thus $\CO$ contains one element of the form $\tet_{\pi,\tau}$ for some $(\pi,\tau) \in \Phi_{n}(\kc)$. However, one can check that, if $(\pi,\tau)$ and $(\pi',\tau')$ are two distinct elements of $\Phi_n(\kc)$, then the closures of the $\G$-orbits $\G \cdot \vartheta_{\pi,\tau}$ and $\G \cdot \vartheta_{\pi',\tau'}$ are also distinct, and consequently the closure of every $\G$-orbit supports a unique  ergodic $\G$-invariant measure. 

In more detail, let $1\leq i < j \leq n$ be arbitrary, and let $\pi \in \SP(n)$ be the unique set partition such that $\CD(\pi) = \{(i,j)\}$. For every $\tau \in \kc \setminus \{\0\}$, let $\xi_{i,j}(\tau)$ denote the supercharacter $\xi^{\pi,\tau}$ where $\tau \in \Col_{\kc}(\pi)$ is given by $\tau(i,j) = \tau$; following \cite{Andre1995a}, we refer to $\xi_{i,j}(\tau)$ as the $(i,j)$-th \textit{elementary character} of $U_{n}(\k)$ associated with $\tau$. On the other hand, let $\tet_{i,j}(\tau) = \tau \star e^{\ast}_{i,j} \in \fru_{n}(\k)^{\circ}$, and let $\CO_{i,j}(\tau)$ denote the closure of the $\G$-orbit $\G\cdot \tet_{i,j}(\tau)$. It is straightforward to check that $\G\cdot \tet_{i,j}(\tau)$ consists of all characters $\tau \star a^{\ast} \in \fru_{n}(\k)^{\circ}$ where $a = (a_{r,s}) \in \fru_{n}(\k)$ is any element which satisfies $$a_{r,s} = \begin{cases} 0, & \text{if $1 \leq r < i$ or $j < s \leq n$,} \\ 1, & \text{if $r = i$ and $s = j$,} \\ a_{i,s}a_{r,j}, & \text{if $i < r < s < j$,} \end{cases}$$ and thus the $\G$-orbit $\G\cdot \tet_{i,j}(\tau)$ is homeomorphic to the product space $\k^{2(j-i-1)}$ (hence, it is countable and discrete). On the other hand, for every $\tau \in \kc$, the set $S = \set{\alpha\tau}{\alpha \in \k}$ is clearly a subgroup of $\kc$ with $S^{\perp} = \{0\}$ where $S^{\perp}$ is the orthogonal subgroup of $S$ in $\k$; since $\ovl{S}{}^{\perp} = S^{\perp}$ where $\ovl{S}$ denotes the closure of $S$ in $\kc$, we conclude that $\ovl{S} = \ovl{S}{}^{\perp\perp} = \kc$ (hence, $S$ is dense in $\kc$), and this implies that the closure $\CO_{i,j}(\tau)$ of $\G\cdot\tet_{i,j}(\tau)$ is homeomorphic to the compact space $(\kc)^{2(j-i-1)}$.

In general, it can be proved that, for every $(\pi,\tau) \in \Phi_{n}(\kc)$, the $\G$-orbit $\G \cdot \tet_{\pi,\tau}$ decomposes as the (algebraic) sum $$\G \cdot \tet_{\pi,\tau} = \sum_{(i,j)\in \CD(\pi)} \G\cdot \tet_{i,j}(\tau(i,j)),$$ and consequently its closure $\CO^{\pi,\tau}$ decomposes as the sum $$\CO^{\pi,\tau} = \sum_{(i,j)\in \CD(\pi)} \CO_{i,j}(\tau(i,j)).$$ From this, it is not hard to conclude that $\CO^{\pi,\tau}$ is homeomorphic to the compact space $(\kc)^{r(\pi)}$ where $r(\pi) = \big| \bigcup_{(i,j) \in \CD(\pi)} \set{(i,k), (k,j)}{i < k < j}\big|$.

Since there is a one-to-one correspondence between $\ISCh(U_{n}(\k))$ and the set consisting of the closures of all $\G$-orbit on $\fru_{n}(\k)^{\circ}$, it follows that, if $\CO$ is the closure of some $\G$-orbit on $\fru_{n}(\k)^{\circ}$ and $\mu^{\CO}$ is the unique ergodic $\G$-invariant measure supported on $\CO$, then the corresponding indecomposable supercharacter $\xi^{\CO} \in \ISCh(U_{n}(\k))$ admits the integral formula $$\xi^{\CO}(g) = \int_{\CO} \tet(g)\; d\mu^{\CO},\qquad g \in G,$$ which is, not only a generalisation of the supercharacter formula for finite algebra groups, but also an analogue for the Kirillov character formula for nilpotent real Lie groups (see \cite{Kirillov1962a} and also \cite{Corwin1994a} for the case of nilpotent discrete groups over the rational numbers). Furthermore, it is easy to check that, in the particular case of the discrete Heisenberg group $U_{3}(\k)$, every the superclasses are precisely the conjugacy classes, and hence $\ISCh(U_{3}(\k)) = \ICh(U_{3}(\k))$. These similarities, together with the fact that the Plancherel measure coincides with the super-Plancherel measure, suggests that supercharacter theories may provide adequate alternatives to the classical character theory.

\bibliographystyle{acm}

\end{document}